\def\R{\mathbb{R}}
\def\N{\mathbb{N}}
\def\Q{\mathbb{Q}}
\newcommand{\E}[1]{\mathbb{E}\left[{#1}\right]}
\newcommand{\V}[1]{\mathbb{V}\left({#1}\right)}
\newcommand{\Cov}[2]{\mathrm{Cov}\left({#1},{#2}\right)}
\def\P{\mathbb{P}}
\def\X{\mathbb{X}}
\def\H{\mathbb{H}}
\def\K{\mathcal{K}}
\def\A{\mathcal{A}}
\def\W{\mathcal{W}}
\def\RR{\mathcal{R}}
\def\MM{\mathbb{M}}
\def\XX{\mathcal{X}}
\def\QQ{\mathcal{Q}}
\def\AA{\mathbb{A}}
\DeclareMathOperator*{\esssup}{ess\,sup}
\newcommand{\1}{\mathds{1}}
\newcommand{\0}{\mathbf{0}}
\newcommand{\SKP}[2]{\langle #1 , #2 \rangle}
\newcommand{\Bf}{\mathcal{B}_f(\X)}
\newcommand{\ND}{\mathds{N}(\X)}
\newcommand{\NDM}{\mathds{N}(\X\times\MM)}
\newcommand{\NDW}{\mathds{N}(W\times\MM)}
\newcommand{\HK}[2]{{#1}^{({#2})}}
\renewcommand{\d}[1]{ \mathrm{d}{#1} }
\newcommand{\z}[3]{I_{{#1},{#2}}^{#3}}
\newcommand{\f}[1]{f_{#1}(\Delta_W)}
\renewcommand{\|}{\mid}
\newtheorem{Th}{Theorem}[section]
\newtheorem{Prop}[Th]{Proposition}
\newtheorem{Lem}[Th]{Lemma}
\newtheorem{Kor}[Th]{Corollary}
\newtheorem{Bem}[Th]{Remark}
\newtheorem{Def}[Th]{Definition}
\title{Central limit theorems for the Euler characteristic in the Random Connection Model for higher-dimensional simplicial complexes}
\author{ Dominik~Pabst \\
	Institute of Stochastics\\
	Karlsruhe Institute of Technology\\
	Institute of Theoretical Physics\\
    Friedrich Alexander University Erlangen-Nuremberg
}
\begin{document}

\maketitle

\begin{abstract}
    As generalizations of random graphs, random simplicial complexes have been receiving growing attention in the literature.
    In this paper, we naturally extend the Random Connection Model (RCM), a random graph that has been extensively studied for over three decades, to a random simplicial complex, recovering many models currently found in the literature as special cases.
    In this new model, we derive quantitative central limit theorems for a generalized Euler characteristic in various asymptotic scenarios.
    We will accomplish this within a very general framework, where the vertices of the simplicial complex are drawn from an arbitrary Borel space.
\end{abstract}

\begin{small}
\keywords{Random Connection Model \and Euler characteristic \and Central limit theorems \and Simplicial complexes}
\end{small}

\begin{small}
\mscs{60F05 \and 60D05 \and 60C05}
\end{small}

\section{Introduction}

Random graphs are used in a wide variety of fields to model real-world phenomena and simultaneously constitute a rich area of mathematical research.
Over the past two decades, random simplicial complexes, generalizations of random graphs, have gained increasing popularity.
Prominent examples include the \v Cech and Vietoris-Rips complexes, which are applied in topological data analysis to study spatial data but are now also extensively investigated mathematically outside this context.
In contrast to graphs, where edges represent interactions or connections between two objects, simplicial complexes allow for the description of interactions involving more than two objects, extending beyond pairwise interactions.
Formally, a simplicial complex is a family of non-empty finite subsets of a vertex set, closed under the subset relation.
An element of a simplicial complex consisting of $n+1$ elements is called an $n$-dimensional simplex, or simply an $n$-simplex.
The dimension of a simplicial complex is the maximum dimension of its simplices.
In this sense, edges are one-dimensional simplices, and graphs are (at most) one-dimensional simplicial complexes.
The diversity of models for random simplicial complexes has steadily increased in recent years, often building upon well-known random graph models.
For instance, the famous Erdös-Rényi graph was extended in \cite{Costa} to a very general model for a random simplicial complex.
The so-called clique complex of the Erdös-Rényi graph was already studied in \cite{Kahle.Topology,Kahle.LimitTheorems}.
The \v Cech and Vietoris-Rips complexes generalize the random geometric graph (see \cite{Penrose.Graphs}), in which random vertices are selected within a metric space, and two vertices are joined by an edge if their distance does not exceed a fixed threshold.
Central limit theorems for Betti numbers of these two complexes based on Poisson or binomial processes can be found in \cite{Kahle.LimitTheorems,Yogeshwaran}, while \cite{Hirsch} provides a sharp phase transition for percolation of the Vietoris-Rips complex.

In this paper we introduce a new model for a random simplicial complex based on the \textit{Random Connection Model} (RCM), one of the most investigated random graphs, introduced by Penrose in 1991 \cite{Penrose.RCM}.
In the RCM, the vertices of the random graph originate from a Poisson process on some space $\X$.
Given a measurable and symmetric function $\varphi:\X^2\rightarrow[0,1]$, each pair of vertices $x,y$ is independently connected with probability $\varphi(x,y)$.
The RCM thus combines a random vertex set with randomly chosen edges, offering a broad variety of random graphs due to the choice of $\varphi$.
Central limit theorems for various functionals of the RCM are provided in \cite{Can,Last.RCM}, while results on its percolation properties can be found in \cite{Caicedo,Dickson}.
To extend the RCM to higher-dimensional simplicial complexes, we consider a Poisson process $\Phi$ on a Borel space $\X$ (see Section \ref{Sec:Pre} for precise definitions) and fix some $\alpha\in\N$, which will be the maximal dimension of the simplicial complex.
For each $j\in\{1,\dots,\alpha\}$, we choose a measurable and symmetric function $\varphi_j:\X^{j+1}\rightarrow [0,1]$, referred to as the connection functions of the model.
The random simplicial complex $\Delta$ is then constructed as follows.
\begin{itemize}
\item[(0)] The vertex set of $\Delta$ is the set of points of $\Phi$.
\item[(1)] For each pair of points $x,y \in \Phi$, independently add the edge $\{x,y\}$ to the complex $\Delta$ with probability $\varphi_1(x,y)$.
\item[(2)] For each triple $x,y,z \in \Phi$, whose edges were added in step (1), independently add the triangle $\{x,y,z\}$ with probability $\varphi_2(x,y,z)$ to the complex $\Delta$.
\item[$\vdots$]
\item[($\alpha$)] For each $\alpha+1$ points $x_{i_0},\dots,x_{i_\alpha} \in \Phi$, whose subsimplices were all added in previous steps, independently add the simplex $\{x_{i_0},\dots,x_{i_\alpha}\}$ with probability $\varphi_\alpha(x_{i_0},\dots,x_{i_\alpha})$ to the complex $\Delta$.
\end{itemize}
The special case $\alpha=1$ recovers the classical RCM as a random graph.
To highlight the flexibility of this model, we present a non-Euclidean example.
Consider the space $\X=\mathbb{H}^2:=\{ z\in\R^2 \,|\, \Vert z \Vert < 1\}$, the two-dimensional hyperbolic space realized by the Poincaré ball model, which is a metric space with the hyperbolic metric
\begin{align*}
d_{h}(x,y) \,:=\, \mathrm{arccosh}\left( 1+\frac{2\Vert x-y\Vert^2}{\left(1-\Vert x\Vert^2\right)\left(1-\Vert y\Vert^2\right)} \right).
\end{align*}
Hyperbolic lines in this model are either Euclidean straight lines through the origin or circles orthogonal to the unit sphere (intersected with $\mathbb{H}^2$).
A detailed introduction to hyperbolic geometry can be found in \cite{Ratcliffe}.
We provide two example models based on a Poisson process $\Phi$ on $\mathbb{H}^2$ with intensity measure
\begin{align*}
\lambda(\cdot) \,:=\, \beta \frac{1}{2\pi} \int_0^{2\pi} \int_0^\infty \cosh(t) \; \1\{(t,\phi)\in \cdot\} \; \d t \; \d \phi
\end{align*}
for some $\beta>0$, where $(t,\phi)$ denote the hyperbolic point with hyperbolic distance $t$ to the origin and angle $\phi$ (from the euclidean coordinates with respect to some reference direction).
For an appealing visualisation, we choose $\alpha=2$ along with the connection functions
\begin{align}\label{hyperbolisch_geometrisch}
\varphi_1(x,y) \,=\, \1 \big\{ d_h(x,y) \leq r \big\}, \qquad \varphi_2(x,y,z) \,\equiv\, p, \qquad r>0,\; p\in [0,1].
\end{align}
Here, the underlying graph is a random geometric graph with respect to the hyperbolic metric.
Figure \ref{fig:hyperbolicRCM1} shows a realization of the simplicial complex $\Delta$ with parameter choice $\beta=30,r=\frac{2}{5},p=\frac{1}{2}$, where the non-Euclidean geometry is clearly visible.
In addition to a clustering of points at the boundary of the ball, it can also be observed that the hyperbolic distance between two points at the boundary of the ball is significantly greater in relation to their Euclidean distance than it is in the center of the ball.
\begin{figure}[t]
  \captionsetup{ labelfont = {bf}, format = plain }
  \centering
  \includegraphics[width=0.5\linewidth]{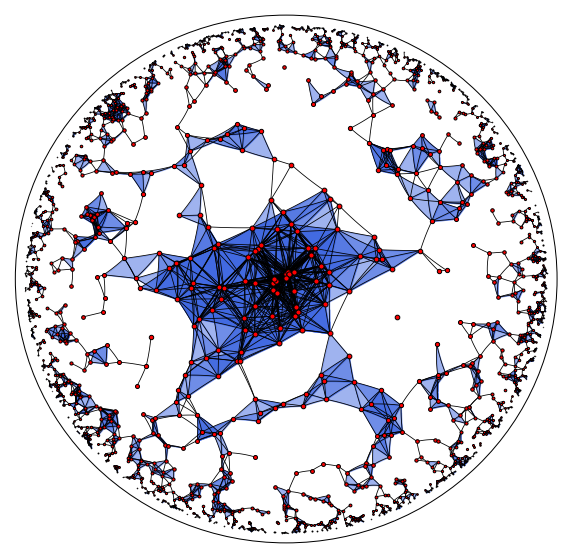}
  \caption{A realisation of the model in hyperbolic space with the connection functions from (\ref{hyperbolisch_geometrisch})}
  \label{fig:hyperbolicRCM1}
\end{figure}
To present another example model in hyperbolic space, we consider the connection functions
\begin{align}\label{hyperbolisch_Geradenprozess}
\varphi_1(x,y) \,=\, \1 \big\{ H(x)\cap H(y) \neq \emptyset \big\}, \qquad \varphi_2(x,y,z) \,\equiv\, p, \qquad p\in [0,1],
\end{align}
where $H(z)$ is the unique hyperbolic line containing $z\in\mathbb{\H}^2\setminus\{\0\}$, such that $z$ is the point of $H(z)$ with the smallest hyperbolic distance to the origin.
Therefore, we can identify the point process $\Phi$ with a line process on $\mathbb{H}^2$ (a point process on the space of hyperbolic lines).
Figure \ref{fig:hyperbolicRCM2} shows a realisation of this line process and the corresponding complex $\Delta$ with respect to the connection functions from (\ref{hyperbolisch_Geradenprozess}) for $\beta=\frac{11}{5}$ and $p=\frac{1}{2}$.
\begin{figure}[t]
  \captionsetup{ labelfont = {bf}, format = plain }
  \centering
  \subfloat[][]{\includegraphics[width=0.48\linewidth]{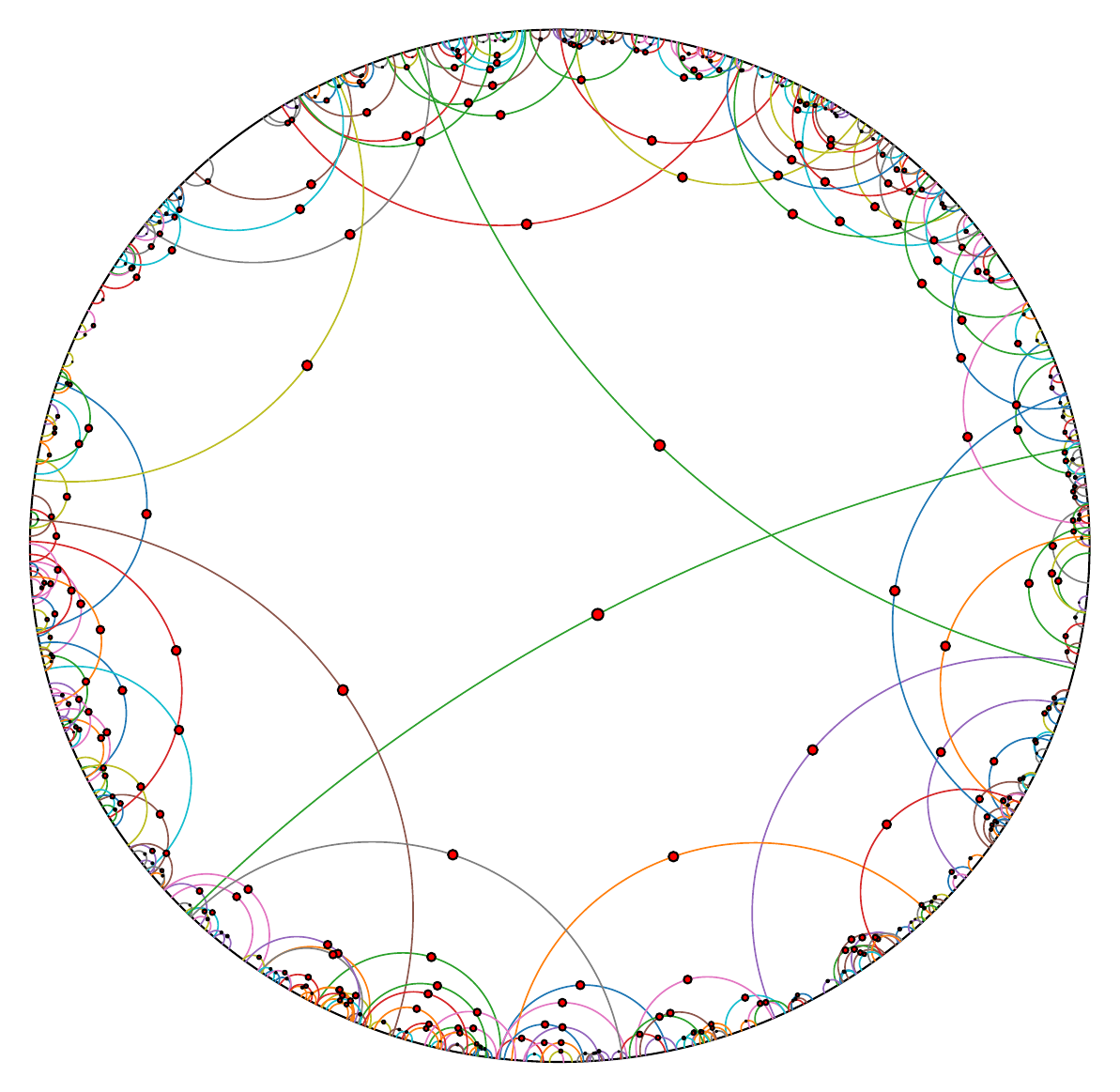}}
  \quad
  \subfloat[][]{\includegraphics[width=0.48\linewidth]{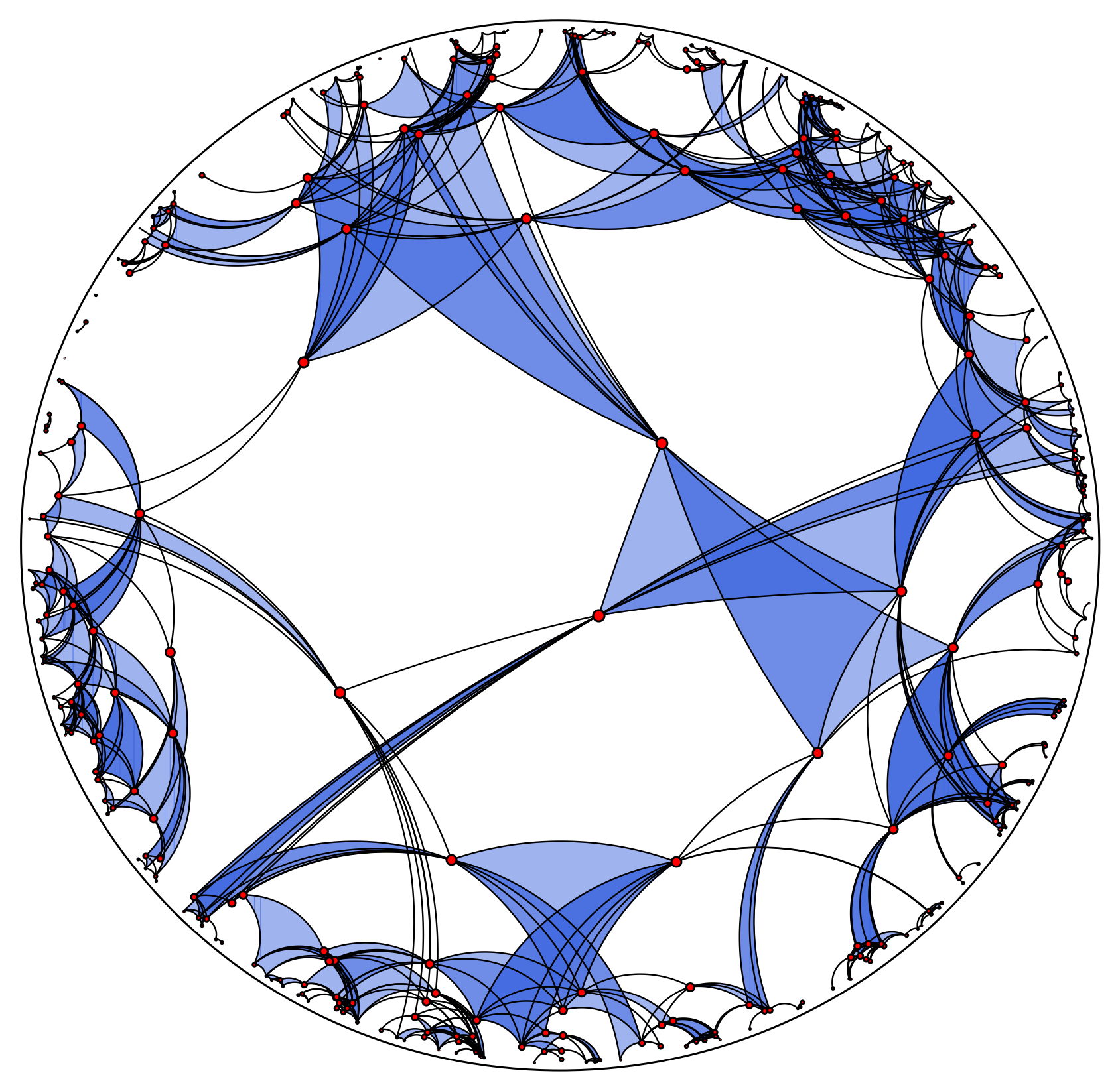}}
  \caption{A realisation of a hyperbolic line process (a) and the model based on this realisation (b) with the connections functions from (\ref{hyperbolisch_Geradenprozess})}
  \label{fig:hyperbolicRCM2}
\end{figure}
The \v Cech and Vietoris-Rips arise as special cases of the model or at least their $\alpha$-skeletons (the restriction of a complex to simplices of dimension not bigger than $\alpha$).
To obtain the \v Cech complex with parameter $r>0$ on a metric space $\X$, we choose the connection functions
\begin{align*}
\varphi_j(x_0,\dots,x_j) \,=\, \1\Big\{ \bigcap_{i=0}^j B(x_i,r)\neq\emptyset \Big\}, \qquad j\in \{1,\dots,\alpha\},
\end{align*}
where $B(x_i,r)$ denotes the closed ball with center $x_i$ and radius $r$ with respect to the metric on $\X$.
Selecting the connection functions
\begin{align*}
\varphi_j(x_0,\dots,x_j) \,=\, \1\big\{ \mathrm{diam}(x_0,\dots,x_j)\leq r \big\}, \qquad j\in \{1,\dots,\alpha\},
\end{align*}
yields the Vietoris-Rips complex, where $\mathrm{diam}(x_0,\dots,x_j)$ denotes the diameter of the set $\{x_0,\dots,x_j\}$ with respect to the metric on $\X$.
The soft random simplicial complexes from \cite{Candela} (or again their $\alpha$-skeletons) also arise as special cases of the model presented in this paper.

Together with the Betti numbers the \textit{Euler characteristic} is one of the most considered functionals of simplicial complexes and its directly connected with the Betti numbers by the Euler-Poincaré formula.
Central limit theorems for the Euler characteristic of random simplicial complexes can be found in \cite{Candela,Thomas}.
However, the Euler characteristic is not only investigated for simplicial complexes but also in other contexts.
In stochastic geometry often the so-called intrinsic volumes are of interest, which can be defined on polyconvex sets (unions of finitely many compact and convex subsets of $\R^d$), and the zeroth intrinsic volume is the Euler characteristic (see for example Section 14.2 in \cite{Schneider}).
In this paper we will consider a generalized Euler characteristic with an arbitrary coefficient vector.
For a finite simplicial complex $K$ we define the Euler characteristic with coefficient vector $a=(a_0,\dots,a_\alpha)\in\R^{\alpha+1}$ by
\begin{align*}
\chi_a(K) \,:=\, \sum_{i=0}^{\dim(K)} a_i \, f_i(K),
\end{align*}
where $f_i(K)$ is the number of $i$-simplices contained in $K$ and $a_i=0$ for $i>\alpha$.
The classical Euler characteristic is obtained by choosing $a_i=(-1)^i$.
Since the intensity measure of the underlying Poisson process $\Phi$ is not necessarily finite, $\Delta$ may contain an infinite number of vertices.
We assume that the intensity measure of $\Phi$ has the form $\beta\lambda$, where $\lambda$ is a diffuse and $\sigma$-finite measure, and $\beta > 0$.
To analyze the Euler characteristic, we often restrict $\Delta$ to simplices whose vertices lie within an observation window $W \subseteq \X$, where $W$ is a measurable set satisfying $\lambda(W) < \infty$.
This restricted subcomplex, denoted by $\Delta_W$, is almost surely finite.
In this paper, results and methods derived from the PhD thesis \cite{Pabst.Thesis} are presented.

After deducing formulas for first and second moments of $\chi_a(\Delta_W)$ in Section \ref{Sec:MomentFormulas}, we will prove central limit theorems for $\chi_a(\Delta_W)$ for the asymptotic regimes $\beta\rightarrow\infty$ and $\lambda(W)\rightarrow\infty$ in Section \ref{Sec:CLT}.
The proofs of the central limit theorems are based on results for normal approximation of Poisson functionals from \cite{Last.NormalApprox} (Theorems 1.1 and 1.2).
These results involve the difference operator of Poisson functionals.
The goal of Section \ref{Sec:NormalAppTheorem} is to transfer these results from \cite{Last.NormalApprox} to slightly different operators and to provide other useful tools for deriving central limit theorems.
Finally, in Section \ref{Sec:MultivariateCLT}, we will consider an important special case of the model, which we will refer to as the marked stationary case, and in this case, we will derive a multivariate central limit theorem for simplex counts.

\section{Preliminaries} \label{Sec:Pre}

Let $(\X,\mathcal{X})$ be a Borel space, that is a measurable space, such that there is measurable bijection from $\X$ to a Borel subset of $[0,1]$ with measurable inverse.
We equip $(\X,\mathcal{X})$ with a $\sigma$-finite and diffuse measure $\lambda$.
Throughout the paper we will often use the notation $|W|:=\lambda(W)$ for $W\in\mathcal{X}$.
Furthermore we assume there is a transitive binary relation $\prec$ on $\X$ such that $\{(x,y)\in \X^2 \;|\; x\prec y\}$ is a measurable subset of $\X^2$ and $\lambda([x])=0$ for all $x\in\X$, where $[x]:=\X\setminus \{y\in \X \;|\; x\prec y \text{ or } y\prec x\}$.
In addition we require that $x\prec x$ fails for all $x\in\X$.
Observe that the requirements on the relation $\prec$ already imply that the measure $\lambda$ is diffuse.
The existence of such a relation is not a loss of generality, since for an arbitrary Borel space $(\mathbb{Y},\mathcal{Y})$ equipped with a $\sigma$-finite measure $\mu$ we can always consider the extended Borel space $\X:=\mathbb{Y}\times [0,1]$ with the measure $\lambda:=\mu\otimes \lambda^1\vert_{[0,1]}$ and the relation $(x,u)\prec (y,v)$ if $u<v$, where $\lambda^1\vert_{[0,1]}$ denotes the restriction of the one-dimensional Lebesgue measure to the unit interval $[0,1]$.
We assume that all random objects appearing are defined on the same probability space, denoted by $(\Omega,\A,\P)$.

\subsection{Poisson processes}

Let $\ND$ be the set of all measures on $(\X,\mathcal{X})$, which can be written as an at most countable sum of finite measures taking values in $\N_0$ and $\mathcal{N}(\X)$ the smallest $\sigma$-field, such that the mappings $\ND\rightarrow\R$, $\eta\mapsto\eta(B)$ are measurable for all $B\in\mathcal{X}$.
A random element $\Phi$ in $(\ND,\mathcal{N}(\X))$ is called a point process on $\X$ and the measure $B\mapsto \E{\Phi(B)}$ the intensity measure of $\Phi$.
For the construction of the model we will consider a so called Poisson process, which is a point process with certain properties.
These properties are crucial for the results presented in this paper.
For a comprehensive introduction to the theory of Poisson processes, we refer to \cite{Last.Lectures}.
A point process $\Phi$ on $\X$ with intensity measure $\beta\lambda$ for $\beta>0$ is called a Poisson process if it satisfies the following two properties.
\begin{enumerate}
    \item[(a)] For every $B\in\XX$ the random variable $\Phi(B)$ follows a Poisson distribution with parameter $\beta\lambda(B)$.
    \item[(b)] For pairwise disjoint sets $B_1,\dots,B_m\in\XX$, $m\in\N$, the random variables $\Phi(B_1),\dots,\Phi(B_m)$ are stochastically independent.
\end{enumerate}
A Poisson process always exists in this setting (Theorem 3.6 in \cite{Last.Lectures}) and its distribution is uniquely determined by its intensity measure (Proposition 3.2 in \cite{Last.Lectures}).
From now on let $\Phi$ be a Poisson process on $\X$ with intensity measure $\beta\lambda$.
Corollary 6.5 in \cite{Last.Lectures} shows the existence of a random variable $\tau^\prime$ with values in $\N_0\cup\{\infty\}$ and random elements $X_1^\prime,X_2^\prime,\dots$ in $\X$ such that
\begin{align} \label{PktP:Darstellung}
\Phi \,=\, \sum_{i=1}^{\Phi(\X)} \delta_{X_i^\prime}
\end{align}
holds almost surely.
The restriction of $\Phi$ to a set $\W\in\XX$ is denoted by $\Phi_W$.
Observe that $\Phi_W$ is a Poisson process with intensity measure $\beta\lambda_W:=\beta\lambda(\cdot\cap W)$.
We call a set $W\in\XX$ an observation window if $0<\lambda(W)<\infty$ and denote the set of all observation windows by $\Bf$.
For $W\in\Bf$ Proposition 3.5 in \cite{Last.Lectures} shows that $\Phi_W$ satisfies
\begin{align} \label{PhiW:Darstellung}
\Phi_W \,\stackrel{d}{=}\, \sum_{i=1}^{\Phi(W)} \delta_{X_i},
\end{align}
where $X_1,X_2,\dots$ are i.i.d. random elements in $\X$ with distribution $\frac{\lambda(\cdot\cap W)}{\lambda(W)}$ independent of $\Phi(W)$.
An important construction are so called marked point processes.
Given some measurable space $(\MM,\mathcal{M})$, a probability measure $\Q$ on $(\MM,\mathcal{M})$ and a sequence $(U_i)_{i\in\N}$ of i.i.d. random elements in $\MM$ with distribution $\Q$, which is independent of $\Phi$, the point process
\begin{align*}
\Psi \,:=\, \sum_{i=1}^{\Phi(\X)} \delta_{(X_i^\prime,U_i)}
\end{align*}
is called an independent $\Q$-marking of $\Phi$, where we used the representation (\ref{PktP:Darstellung}) of $\Phi$.
Theorem 5.6 in \cite{Last.Lectures} shows that $\Psi$ is a Poisson process with intensity measure $\beta\lambda\otimes\Q$.

\subsection{Simplicial complexes}

An abstract simplicial complex over a vertex set $V$ is a set of nonempty finite subsets of $V$, which is closed under taking subsets.
For the vertex set $V$ of an abstract simplicial complex $K$ being uniquely determined, we will always assume that all the one-elemented subsets of $V$ are elements of $K$.
An element $\sigma$ of a simplicial complex $K$ with $|\sigma|=j+1$ is called an abstract $j$-simplex and $\dim(\sigma):=|\sigma|-1=j$ its dimension.
We denote a $j$-simplex, whose elements are $v_0,\dots,v_j$, by $[v_0,\dots,v_j]$.
Since we only consider abstract simplicial complexes in this paper, we will omit the word abstract.
The dimension of a simplicial complex $K$ is defined by $\dim(K):=\sup\{\dim(\sigma) \mid \sigma\in K \}$ and can be infinite, while a one-dimensional simplicial complex is a graph.
The restriction of $K$ to simplices of dimension not bigger than some $j\in\N_0$ is called the $j$-skeleton
\begin{align*}
S_j(K) \,:=\, \{ \sigma\in K \,\|\, \dim(\sigma)\leq j \}.
\end{align*}
of $K$ and is again a simplicial complex.
Two simplicial complexes $K,L$ with vertex sets $V_K,V_L$ are called isomorphic, denoted by $K\cong L$, if there is a bijection $f:V_K\rightarrow V_L$ such that
\begin{align*}
\sigma\in K \quad \Longleftrightarrow \quad f(\sigma)\in L.
\end{align*}
Loosely speaking two simplicial complexes are isomorphic, if we can get one out of the other just by renaming the vertices.
Denoting the number of $j$-simplices of a simplicial complex $K$ by $f_j(K)$, we define the Euler characteristic with coefficient vector $a=(a_0,\dots,a_\alpha)\in\R^{\alpha+1}$ of a finite simplicial complex $K$ by
\begin{align}\label{Def:Euler}
\chi(K) \,:=\, \sum_{i=0}^{\dim(K)} a_i f_i(K),
\end{align}
where $a_i=0$ for $i>\alpha$.
Since we consider only simplicial complexes of dimension at most $\alpha$, the choice of $a_i$ for $i>\alpha$ is irrelevant, and the classical Euler characteristic arises in the special case $a_i=(-1)^i$.
Moreover, we write $F_i(K)$ for the set of $i$-simplices of $K$.
Since definitions of concrete simplicial complexes sometimes get quite complicated, we introduce a new notation for simplicial complexes.
For an at most countable set $M$ of abstract simplices we denote by
\begin{align*}
  \langle M \rangle \,:=\, \{ \rho \,|\, \emptyset\neq\rho\subseteq\sigma \text{ for some } \sigma\in M \} 
\end{align*}
the smallest simplicial complex containing all the simplices in $M$.
We call $\langle M \rangle$ the simplicial complex generated by the simplices of $M$.
To simplify the notation we omit set brackets using this notation.
For example the simplicial complex $K$ from Figure \ref{fig:2dimSK} can be written as
\begin{align*}
K \,=\, \big\langle [x_1,x_2,x_3], [x_3,x_4], [x_3,x_5], [x_4,x_5], [x_5,x_6,x_7] \big\rangle.
\end{align*}

\begin{figure}[t!]
    \captionsetup{
    labelfont = {bf},
    format = plain,
  }
	\centering
	\begin{normalsize}
\begingroup%
  \makeatletter%
  \providecommand\color[2][]{%
    \errmessage{(Inkscape) Color is used for the text in Inkscape, but the package 'color.sty' is not loaded}%
    \renewcommand\color[2][]{}%
  }%
  \providecommand\transparent[1]{%
    \errmessage{(Inkscape) Transparency is used (non-zero) for the text in Inkscape, but the package 'transparent.sty' is not loaded}%
    \renewcommand\transparent[1]{}%
  }%
  \providecommand\rotatebox[2]{#2}%
  \newcommand*\fsize{\dimexpr\f@size pt\relax}%
  \newcommand*\lineheight[1]{\fontsize{\fsize}{#1\fsize}\selectfont}%
  \ifx\svgwidth\undefined%
    \setlength{\unitlength}{267.73769673bp}%
    \ifx\svgscale\undefined%
      \relax%
    \else%
      \setlength{\unitlength}{\unitlength * \real{\svgscale}}%
    \fi%
  \else%
    \setlength{\unitlength}{\svgwidth}%
  \fi%
  \global\let\svgwidth\undefined%
  \global\let\svgscale\undefined%
  \makeatother%
  \begin{picture}(1,0.47021875)%
    \lineheight{1}%
    \setlength\tabcolsep{0pt}%
    \put(0,0){\includegraphics[width=\unitlength,page=1]{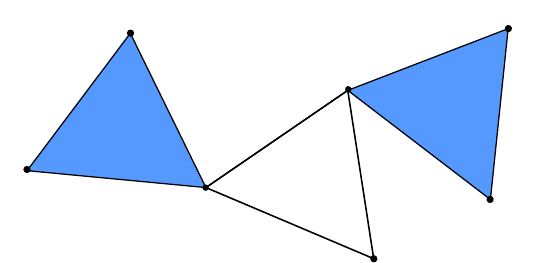}}%
    \put(0.1819526,0.41607067){\color[rgb]{0,0,0}\makebox(0,0)[lt]{\lineheight{1.25}\smash{\begin{tabular}[t]{l}$x_1$\end{tabular}}}}%
    \put(-0.00372041,0.14900006){\color[rgb]{0,0,0}\makebox(0,0)[lt]{\lineheight{1.25}\smash{\begin{tabular}[t]{l}$x_2$\end{tabular}}}}%
    \put(0.32674331,0.09997175){\color[rgb]{0,0,0}\makebox(0,0)[lt]{\lineheight{1.25}\smash{\begin{tabular}[t]{l}$x_3$\end{tabular}}}}%
    \put(0.67758246,0.02292721){\color[rgb]{0,0,0}\makebox(0,0)[lt]{\lineheight{1.25}\smash{\begin{tabular}[t]{l}$x_4$\end{tabular}}}}%
    \put(0.58398295,0.32792171){\color[rgb]{0,0,0}\makebox(0,0)[lt]{\lineheight{1.25}\smash{\begin{tabular}[t]{l}$x_5$\end{tabular}}}}%
    \put(0.86923867,0.43616609){\color[rgb]{0,0,0}\makebox(0,0)[lt]{\lineheight{1.25}\smash{\begin{tabular}[t]{l}$x_6$\end{tabular}}}}%
    \put(0.84376953,0.07959635){\color[rgb]{0,0,0}\makebox(0,0)[lt]{\lineheight{1.25}\smash{\begin{tabular}[t]{l}$x_7$\end{tabular}}}}%
  \end{picture}%
\endgroup%

	\end{normalsize}
		 
	\caption{A two-dimensional simplicial complex $K$ with vertex set $\{x_1,\dots,x_7\}$}
	\label{fig:2dimSK}
\end{figure}

\section{Definition of the model} \label{Sec:Definition_RCM}

The goal of this section is to provide a concrete definition of the random simplicial complex $\Delta$.
We fix some $\alpha\in\N$, which will be the maximal dimension of $\Delta$, and choose for each $j\in\{1,\dots,\alpha\}$ a measurable and symmetric function
\begin{align*}
\varphi_j: \X^{j+1} \rightarrow [0,1].
\end{align*}
We call $\varphi_1,\dots,\varphi_\alpha$ the connection functions of the model.
For the remainder of this paper, let $\Phi$ be a Poisson process on $\X$ with intensity measure $\beta\lambda$ for some $\beta>0$, which we will call the intensity of the model.
The vertex set of $\Delta$ consists of the points of $\Phi$.
Using the representation (\ref{PktP:Darstellung}) of $\Phi$, the vertex set of $\Delta$ is given by $\{ X_i^\prime \mid i\leq\Phi(\X)\}$.
To define the higher-dimensional simplices of $\Delta$, some additional randomness is required.
For this purpose we will consider a Poisson process on an even bigger space.
We write
\begin{align*}
M^{(j)} \,:=\, [0,1]^{\N^j} \,=\, \big\{ (a_{z})_{z\in\N^j} \,\|\, a_{z}\in [0,1] \text{ for all } z\in\N^j \big\}
\end{align*}
for the space of $j$-times indexed sequences with members in $[0,1]$, equipped with the product $\sigma$-field of the Borel $\sigma$-fields on $[0,1]$.
We consider the space $\MM := M^{(2)} \times \dots \times M^{(2\alpha)}$ equipped with the product $\sigma$-field and the probability distribution $\Q:=\otimes_{j=1}^\alpha\otimes_{z\in\N^{2j}}\mathcal{U}([0,1])$, where $\mathcal{U}([0,1])$ is the uniform distribution on $[0,1]$.
A random element $U$ in $\MM$ with distribution $\Q$ is a sequence of i.i.d. random variables with distribution $\mathcal{U}([0,1])$ with indices in $\cup_{j=1}^\alpha\N^{2j}$.
We denote the random variable corresponding to the index $(m_1,l_1,\dots,m_j,l_j)\in\N^{2j}$ by $U_{(m_1,l_1,\dots,m_j,l_j)}$.
From now on, let $\Psi$ be the independent $\Q$-marking of $\Phi$ given by
\begin{align*}
    \Psi \,:=\, \sum_{i=1}^{\Phi(\X)} \delta_{(X_i^\prime,U^{(i)})},
\end{align*}
where $(U^{(j)})_{j\in\N}$ is a sequence of i.i.d. random elements in $\MM$ with distribution $\Q$.
Furthermore, let $\mathcal{Q}=(Q_i)_{i\in\N}$ be a partition of $\X$ into measurable sets with $\lambda(Q_i)<\infty$ for all $i\in\N$.
Such a partition always exists due to the $\sigma$-finiteness of $\lambda$.
Recall that we have a binary relation $\prec$ on $\X$, whose properties ensure that, with probability 1, for all $i,j\in\N$, $i\neq j$, either $X_i^\prime\prec X_j^\prime$ or $X_j^\prime\prec X_i^\prime$ holds.
For $i\in \{1,\dots,\Phi(\X)\}$, let $M_i\in\N$ be the unique number with $X_i^\prime\in Q_{M_i}$ and $L_i\in\N$ be the unique number, such that $X_i^\prime$ is the $L_i$ smallest point of $\Phi$ in $Q_{M_i}$ with respect to $\prec$.
The latter means $\Phi( \{x\in Q_{M_i} \mid x\prec X_i^\prime \} )=L_i-1$.
Since $\Phi(Q_{M_i})<\infty$ almost surely, $L_i$ is finite with probability 1.
The tuple $(M_i,L_i)$ can be interpreted as the (random) coordinates of the point $X_i^\prime$ in $\Phi$. \\
To determine whether a simplex $\sigma:=[X_{i_1}^\prime,\dots,X_{i_j}^\prime]$ with $1\leq i_1 <\dots < i_j\leq \Phi(\X)$, $2\leq j\leq \alpha+1$ belongs to $\Delta$, we assign a uniform distributed variable to $\sigma$.
With probability 1 we can order the elements $X_{i_1}^\prime,\dots,X_{i_j}^\prime$ with respect to $\prec$, i.e., there is a permutation $\pi$ of $\{1,\dots,j\}$, such that $X_{i_{\pi(1)}}^\prime\prec\dots\prec X_{i_{\pi(j)}}^\prime$.
We assign the random variable $U_\sigma:=U^{(i_{\pi(j)})}_{(M_{i_{\pi(1)}},L_{i_{\pi(1)}},\dots,M_{i_{\pi(j-1)}},L_{i_{\pi(j-1)}})}$ to the simplex $\sigma$ and define
\begin{align*}
    \sigma\in\Delta \enskip :\Longleftrightarrow \enskip U_\rho\leq \varphi_{|\rho-1|}(X_\rho^\prime) \enskip \text{for all } \rho\subseteq\sigma \text{ with } |\rho|\geq 2,
\end{align*}
where $\varphi_{|\rho-1|}(X_\rho^\prime)$ is the value of $\varphi_{|\rho-1|}$, when inserting the elements of $\rho$.
We explain some important properties of this construction.
First of all the complex $\Delta$ does not depend on the enumeration of the points of $\Phi$ in representation (\ref{PktP:Darstellung}), since this holds for the coordinates of the points of $\Phi$.
Furthermore the coordinates $(M_i,L_i)$ of $X_i^\prime$ only depend on $\Phi_{Q_{M_i}}$ and therefore for $i\in\N$ the complex $\Delta_{Q_i}$ (the restriction of $\Delta$ to simplices with vertices in $Q_i$) is already determined by $\Psi_{Q_i}:=\Psi_{Q_i\times\MM}$.
More general for $B\in\XX$ the complex $\Delta_B$ is determined by $\Psi_{Q_B}$, where $Q_B$ is the union of all sets of the partition $\QQ$ intersecting $B$.
A similar way of assigning marks to the edges in the RCM is already used in \cite{Last.RCM}. \\
To simplify the notation in some places, we formally set $\varphi_0\equiv 1$.
This intuitively means that each point of the Poisson process $\Phi$ forms a vertex of the simplicial complex $\Delta$.  
In principle, it would also be possible to choose any measurable function $\varphi_0: \X \rightarrow [0,1]$ and independently decide for each point $x\in\Phi$ with probability $\varphi_0(x)$ whether it is included in the vertex set or not.
However, the choice of any such function $\varphi_0$ would not be a generalization of the model, as this would merely correspond to replace $\Phi$ by a $\varphi_0$-thinning (see Definition 5.7 in \cite{Last.Lectures}) of $\Phi$.
According to Corollary 5.9 in \cite{Last.Lectures}, such a thinning results in another Poisson process with intensity measure
\begin{align*}
(\varphi_0\lambda)(A) \,:=\, \beta \int_A \varphi_0(x) \; \lambda (\d x), \qquad A \in \mathcal{X},
\end{align*}
which preserves the model framework, as the original requirements on the intensity measure (i.e., $\sigma$-finiteness and diffuseness) remain satisfied.

\section{First and second moments} \label{Sec:MomentFormulas}

In this section, we will derive formulas for the expected value and the variance of the Euler characteristic.
We will not limit ourselves to the classical Euler characteristic but take an arbitrary linear combination of simplex counts into account, i.e., from now on, we fix a coefficient vector $a=(a_0,\dots,a_\alpha)\in\R^{\alpha+1}$ and consider the generalized Euler characteristic defined in (\ref{Def:Euler}).
This will enable us to establish a multivariate central limit theorem for simplex counts in Section \ref{Sec:MultivariateCLT}.
Before we begin formulating initial results, we introduce so-called integral representations of simplicial complexes, which will play a crucial role not only in this section but throughout the paper.
In particular, we will see in Section \ref{Sec:CLT} that controlling these integral representations will be the decisive step in proving a central limit theorem for the Euler characteristic.
For the whole section we fix an observation window $W\in\Bf$.

\begin{Def} [Simplex functions and integral representations] \label{Def:Simplexfkt}
Let $K$ be a simplicial complex with $\dim(K)\leq\alpha$ on the vertex set $\{1,\dots,r\}$ for some $r\in\N$.
\begin{enumerate}
\item Define the simplex function of $K$ by
\begin{align*}
f_K: \X^r \rightarrow [0,1], \qquad f_K(x_1,\dots,x_r):= \prod_{\sigma\in K} \; \varphi_{|\sigma|-1} (x_\sigma),
\end{align*}
where $\varphi_{|\sigma|-1} (x_\sigma)$ denotes the value of $\varphi_{|\sigma|-1}$ when inserting the elements of $\{ x_i\,|\, i\in \sigma\}$.
\item The integral representation $I_K$ of $K$ (over an observation window $W$) is defined by
\begin{align*}
I_K(W) \,:=\, \int_{W^r} f_K(x_1,\dots,x_r) \;\; \lambda^r \, (\d {(x_1,\dots,x_r)}).
\end{align*}
We call $K$ the simplicial complex associated to $I_K$.
\item For $m,l\in\{1,\dots,\alpha+1\}$ and $\max\{m,l\}\leq r\leq m+l$, we consider the simplicial complex
\begin{align} \label{Def:Komplex_Kmlr}
K_{m,l}^r \,:=\, \Big\langle [1,\dots,m], [1,\dots,m+l-r,m+1,\dots,r] \Big\rangle
\end{align}
on the vertex set $\{1,\dots,r\}$, which is generated by a $(m-1)$-simplex and a $(l-1)$-simplex sharing $m+l-r$ vertices.
Moreover we denote by $\kappa_{m,l}^r$ the simplex function and by $\z m l r$ the integral representation of $K_{m,l}^r$, which means
\begin{align*}
\z m l r (W) \,:=\, \int_{W^r} \kappa_{m,l}^r(x_1,\dots,x_r) \;\; \lambda^r \, (\d {(x_1,\dots,x_r)}).
\end{align*}
Finally, let $\kappa_m:=\kappa_{m+1,m+1}^{m+1}$ be the simplex function of a simplicial complex generated by an $m$-simplex.
\end{enumerate}
\end{Def}

The simplex function of a simplicial complex is not necessarily symmetric, nor are the simplex functions of isomorphic simplicial complexes generally identical.
However, the simplex functions of isomorphic simplicial complexes can be derived from one another by exchanging the arguments.
Consequently, integral representations of isomorphic simplicial complexes coincide.
From the definition of integral representations, it is also evident that the integral representation of a simplicial complex is the product of the integral representations of its connected components.
Conversely, a product of integral representation is again an integral representation, where the numbers of connected components are added. \\
For the definition of the integral representation $\z m l r$, the specific choice of the simplicial complex $K_{m,l}^r$ does not matter, only its isomorphism class.
The latter is already determined by the fact that $K$ is generated by two simplices of dimensions $m$ and $l$, which share $m+l-r$ vertices.
The functions $\kappa_m$ can also be explicitly expressed by
\begin{align*}
    \kappa_m(x_0,\dots,x_{m}) \,:=\, \prod_{\emptyset\neq I\subseteq\{0,\dots,m+1\}} \varphi_{|I|-1}(x_I),
\end{align*}
where the last expression denotes the function value of $\varphi_{|I|-1}$ at the points $x_i$ with $i\in I$.
For $m\in\{1,2\}$, we obtain, for example,
\begin{align*}
\kappa_1 \,=\, \varphi_1,\qquad \kappa_2(x,y,z) \,=\, \varphi_2(x,y,z)\cdot\varphi_1(x,y)\cdot\varphi_1(x,z)\cdot\varphi_1(y,z).
\end{align*}
The integral representation of the simplicial complex $K$ from Figure \ref{fig:2dimSK} is given by
\begin{align*}
I_K(W) \,=\, \int_{W^7} \kappa_2(x_1,x_2,x_3) \; \kappa_2(x_5,x_6,x_7) \; \varphi_1(x_3,x_5) \; \varphi_1(x_3,x_4) \; \varphi_1(x_4,x_5) \;\; \lambda^7 \, (\d {(x_1,\dots,x_7)}).
\end{align*}
We summarize some important properties of integral representations.

\begin{Bem} \label{Bem:zmlr}
Let $m,l\in\{1,\dots,\alpha+1\}$ with $m\leq l$ and $l\leq r \leq m+l$.
Then the following statements hold.
\begin{multicols}{2}
\begin{enumerate}
\item[$\mathrm{(i)}$] $\z m l r (W) = \z l m r (W)$
\item[$\mathrm{(iii)}$] $\z m l l (W) = \z l l l (W)$
\item[$\mathrm{(ii)}$] $\z 1 1 1 (W) = |W|$, \quad $\z 1 1 2 (W) = |W|^2$
\item[$\mathrm{(iv)}$] $\z {m}{l}{m+l}(W) = \z m m m (W) \z l l l (W)$
\end{enumerate}
\end{multicols}
\end{Bem}

While the first statement holds due to the isomorphism $K_{m,l}^r\cong K_{l,m}^r$, the second follows directly from the definition of $\varphi_0 \equiv 1$.
The third statement arises from the fact that one simplex is contained within the other in the representation (\ref{Def:Komplex_Kmlr}).
For the last statement, it is simply used that the two simplices from (\ref{Def:Komplex_Kmlr}) are disjoint in this case.

\begin{Prop} \label{Prop:Cov}
For $m,l\in\{1,\dots,\alpha+1\}$ with $m\leq l$ the following statements are true.
\begin{enumerate}
\item[$\mathrm{(i)}$] $\E{ \f {m-1} } \,=\, \frac{\beta^m}{m!} \z m m m (W)$
\item[$\mathrm{(ii)}$] $\Cov {\f {m-1}} {\f {l-1}} \,=\, \sum_{r=l}^{m+l-1} \frac{\beta^r}{(r-m)! (r-l)! (m+l-r)!} \; \z m l r (W)$
\end{enumerate}
\end{Prop}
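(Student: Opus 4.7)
The plan is to reduce both formulas to applications of the multivariate Mecke formula for Poisson processes, together with the observation that, conditional on the point configuration $\Phi$, the inclusion of a simplex in $\Delta$ is governed by independent uniform marks. By the construction of the marks via the coordinates $(M_i,L_i)$ and the order $\prec$, distinct sub-simplices carry mutually independent uniform marks, so conditional on $\Phi$ the probability that a simplex $[x_1,\ldots,x_m]$ lies in $\Delta$ equals $\prod_{\emptyset\neq\rho\subseteq\{1,\ldots,m\}} \varphi_{|\rho|-1}(x_\rho) = \kappa_{m,m}^m(x_1,\ldots,x_m)$, where the factors with $|\rho|=1$ are trivially one.

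For (i), I would write $f_{m-1}(\Delta_W)$ as $\tfrac{1}{m!}$ times the sum over ordered $m$-tuples $(x_1,\ldots,x_m)$ of pairwise distinct points in $\Phi_W$ of $\mathbf{1}\{[x_1,\ldots,x_m]\in\Delta\}$. Taking expectations, conditioning on $\Phi$, and then applying the multivariate Mecke formula yields
\begin{align*}
\E{f_{m-1}(\Delta_W)} \,=\, \frac{\beta^m}{m!} \int_{W^m} \kappa_{m,m}^m(x_1,\ldots,x_m) \; \lambda^m(\d{(x_1,\ldots,x_m)}) \,=\, \frac{\beta^m}{m!}\,\z m m m (W).
\end{align*}
For (ii), I would expand the product $f_{m-1}(\Delta_W)\,f_{l-1}(\Delta_W)$ as a double sum over ordered tuples and partition it according to $s := |\{x_1,\ldots,x_m\}\cap\{y_1,\ldots,y_l\}|$. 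For each $s\in\{0,\ldots,m\}$ there are $\binom{m}{s}\binom{l}{s}s!$ ways to select and pair up the shared coordinates, and the pooled family of $r := m+l-s$ distinct points then carries two simplices whose union is isomorphic to $K_{m,l}^r$; hence the conditional joint probability that both simplices lie in $\Delta$ equals $\kappa_{m,l}^r$, with the shared sub-simplices naturally counted only once. A second application of Mecke combined with the identity
\begin{align*}
\frac{\binom{m}{s}\binom{l}{s}s!}{m!\,l!} \,=\, \frac{1}{(r-m)!\,(r-l)!\,(m+l-r)!}
\end{align*}
then expresses $\E{f_{m-1}(\Delta_W)\,f_{l-1}(\Delta_W)}$ as the sum over $r\in\{l,\ldots,m+l\}$ of the summands appearing in (ii). Finally, the $r=m+l$ term factorises by Remark \ref{Bem:zmlr}(iv) into $\E{f_{m-1}(\Delta_W)}\,\E{f_{l-1}(\Delta_W)}$ and cancels when passing to the covariance, leaving precisely the stated range $l\leq r\leq m+l-1$.

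The main substantive step is the combinatorial bookkeeping in the partition by intersection size and the identification of the relevant simplex function with $\kappa_{m,l}^r$. The most delicate conceptual point is verifying that the model's mark-assignment scheme genuinely attaches mutually independent uniform marks to distinct sub-simplices; this is what makes the conditional probabilities factorise into products of $\varphi$'s and hence lets the resulting integrals collapse onto the integral representations $\z m l r (W)$.
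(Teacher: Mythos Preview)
Your proposal is correct and follows essentially the same route as the paper: both parts rest on the multivariate Mecke formula together with the observation that, given the points, the inclusion probabilities factor into the simplex function $\kappa_{m,l}^r$; in (ii) you parametrise the double sum by the intersection size $s=m+l-r$ and count ordered tuples, whereas the paper parametrises by the union size $r$ and counts pairs $(\sigma,\rho)$ with a prescribed union, but the two bookkeeping schemes are equivalent and lead to the same coefficient $\frac{1}{(r-m)!(r-l)!(m+l-r)!}$, and both identify the $r=m+l$ summand with $\E{f_{m-1}}\E{f_{l-1}}$ via Remark~\ref{Bem:zmlr}(iv).
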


\begin{proof}
    \begin{enumerate}
        \item We apply the multivariate Mecke equation (Theorem 4.4 in \cite{Last.Lectures}) to the Poisson process $\Psi_W$ and the function $f:(W\times\MM)^m\times\NDW\rightarrow [0,\infty)$, 
        \begin{align*}
            f\big((x_1,u_1),\dots,(x_m,u_m),\eta\big) \,:=\, \1\big\{ [x_{1},\dots,x_{r}]\in\Delta(\eta) \big\},
        \end{align*}
        where $\Delta(\eta)$ denotes the complex constructed from $\eta$.
        We obtain
        \begin{align*}
            \E{ \f {m-1} } \,&=\, \mathbb{E}\Bigg[ \frac{1}{m!} \sum_{((x_1,u_1),\dots,(x_m,u_m))\in\Psi_W^{(r)}} \1\big\{ [x_{1},\dots,x_{r}]\in\Delta_W \big\} \Bigg] \\
            &=\, \frac{\beta^m}{m!}\int_{W^m} \kappa_{m-1}(x_1,\dots,x_m) \; \lambda^m \, (\d {(x_1,\dots,x_m)}) \,=\, \frac{\beta^m}{m!} \z m m m (W).
        \end{align*}
        \item We use the representation
        \begin{align*}
            \f {m-1} \f {l-1} \,=\, \sum_{r=l}^{m+l} \sum_{\sigma\in F_{m-1}(\Delta_W)} \sum_{\rho\in F_{l-1}(\Delta_W)} \1\big\{ \sigma,\rho\in\Delta_W, \sigma\cup\rho = r \big\}
        \end{align*}
        and write
        \begin{align*}
            T_{m,l}(A) \,:=\, \big\{ (\sigma,\rho) \mid \sigma,\rho\subseteq A, \sigma\cup\rho = A, |\sigma|=m, |\rho|=l \big\}
        \end{align*}
        for a set $A$ with $l\leq |A|\leq m+l$.
        Note that $|T_{m,l}(A)|=\binom{|A|}{m}\binom{m}{m+l-|A|}$.
        We obtain by another application of the multivariate Mecke equation
        \begin{align*}
            \E{ \f {m-1} \f {l-1} } \;&=\, \sum_{r=l}^{m+l} \E{ \frac{1}{r!} \sum_{((x_1,u_1),\dots,(x_r,u_r))\in\Psi_W^{(r)}} \sum_{(\sigma,\rho)\in T_{m,l}(\{x_1,\dots,x_r\})}  \1\big\{ \sigma,\rho\in\Delta_W\big\} } \\
            &=\, \sum_{r=l}^{m+l} \frac{\beta^r}{r!} \binom{r}{m}\binom{m}{m+l-r} \int_{W^r} \kappa_{m,l}^r(x_1,\dots,x_r) \; \lambda^r \, (\d {(x_1,\dots,x_r)}) \\
            &=\, \sum_{r=l}^{m+l} \frac{\beta^r}{(r-m)!(r-l)!(m+l-r)!} \z m l r (W).
        \end{align*}
        By realizing that the summand for $r=m+l$ is exactly the expression $\E{ \f {m-1}} \E{\f {l-1} }$, using Remark \ref{Bem:zmlr} (iv), the statement for the covariances follows.
    \end{enumerate}
\end{proof}

\begin{Kor} \label{Kor:Moments_Euler}
The first two moments of the generalized Euler characteristic $\chi_a(\Delta_W)$ are given by
\begin{align*}
\E{ \chi_a(\Delta_W) } \,&=\, \sum_{m=1}^{\alpha+1} a_{m-1} \frac{\beta^m}{m!} \z m m m (W), \\
\V{ \chi_a(\Delta_W) } \,&=\, \sum_{m,l=1}^{\alpha+1} \sum_{r=\max\{m,l\}}^{m+l-1} a_{m-1} a_{l-1} \frac{\beta^r}{(r-m)!(r-l)!(m+l-r)!} \, \z m l r (W).
\end{align*}
\end{Kor}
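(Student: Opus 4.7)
\textbf{Proof proposal for Corollary \ref{Kor:Moments_Euler}.}

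The plan is to reduce the statement to Proposition \ref{Prop:Cov} by treating $\chi_a(\Delta_W)$ as a finite linear combination of simplex counts and then applying linearity of expectation together with the bilinearity of the covariance. No further probabilistic input beyond Proposition \ref{Prop:Cov} should be needed; the work is essentially bookkeeping.

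First I would re-index the definition of the generalized Euler characteristic. Since $\dim(\Delta_W)\leq\alpha$ and $a_i=0$ for $i>\alpha$, we can write
\begin{align*}
\chi_a(\Delta_W) \,=\, \sum_{i=0}^{\alpha} a_i \, f_i(\Delta_W) \,=\, \sum_{m=1}^{\alpha+1} a_{m-1} \, f_{m-1}(\Delta_W),
\end{align*}
where the substitution $m=i+1$ matches the indexing used throughout Section \ref{Sec:MomentFormulas}. Each summand is integrable because $\Delta_W$ is almost surely finite and its simplex counts have finite moments (as follows implicitly from Proposition \ref{Prop:Cov}(ii) with $m=l$).

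For the mean, I would apply linearity of expectation and insert Proposition \ref{Prop:Cov}(i) to obtain
\begin{align*}
\E{\chi_a(\Delta_W)} \,=\, \sum_{m=1}^{\alpha+1} a_{m-1} \, \E{\fD W _{\,m-1}} \,=\, \sum_{m=1}^{\alpha+1} a_{m-1} \, \frac{\beta^m}{m!} \, \z m m m (W),
\end{align*}
which is the first claimed identity. (Here I write $\fD W_{m-1}$ in place of $f_{m-1}(\Delta_W)$ following the notation of Proposition \ref{Prop:Cov}.)

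For the variance, I would expand using bilinearity of the covariance:
\begin{align*}
\V{\chi_a(\Delta_W)} \,=\, \sum_{m,l=1}^{\alpha+1} a_{m-1} a_{l-1} \, \Cov{\fD W_{m-1}}{\fD W_{l-1}}.
\end{align*}
Proposition \ref{Prop:Cov}(ii) is stated only for $m\leq l$, but since the covariance is symmetric and the integral representations satisfy $\z m l r (W)=\z l m r (W)$ by Remark \ref{Bem:zmlr}(i), as well as the factorial prefactor $\tfrac{1}{(r-m)!(r-l)!(m+l-r)!}$ being symmetric in $m$ and $l$, the formula of Proposition \ref{Prop:Cov}(ii) holds for arbitrary $m,l\in\{1,\dots,\alpha+1\}$ provided the lower summation index $l$ is replaced by $\max\{m,l\}$. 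Substituting this into the double sum yields the second claimed identity.

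There is no real obstacle here: the only subtle point is to observe the symmetry argument that extends Proposition \ref{Prop:Cov}(ii) from the case $m\leq l$ to all pairs $(m,l)$, so that the inner summation index starts at $\max\{m,l\}$ in the final expression. Once this is in place, the proof is a direct substitution.
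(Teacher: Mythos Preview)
Your proposal is correct and follows exactly the approach of the paper, which simply states that the assertions follow directly from Proposition~\ref{Prop:Cov} by writing the variance of a sum as a sum of covariances. You have merely spelled out the bookkeeping (the re-indexing and the symmetry argument via Remark~\ref{Bem:zmlr}(i)) that the paper leaves implicit.
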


\begin{proof}
    The assertions follow directly from Proposition \ref{Prop:Cov} writing the variance of a sum of random variables as a sum of covariances.
\end{proof}

\section{Quantitative normal approximation in the RCM} \label{Sec:NormalAppTheorem}

In this section, we lay the groundwork for proving central limit theorems in the upcoming sections.
First, in the initial subsection, we will prove a theorem that provides upper bounds for the distance of a standardized Poisson functional to a standard normally distributed random variable.
The second subsection then provides important tools to handle the quantities from these bounds.

\subsection{A Theorem for normal approximation}

The foundation for the central limit theorems that we will derive in the next sections is provided by Theorems 1.1 and 1.2 in \cite{Last.NormalApprox}.
These theorems provide upper bounds for the distance between a standardized Poisson functional and a standard normally distributed random variable with respect to the Wasserstein and Kolmogorov metrics.
However, in order to use these bounds appropriately, we need to modify the statements of the two theorems.
To do so, we aim to prove a corresponding statement in which the difference operators that appear in the quantities of Theorems 1.1 and 1.2 in \cite{Last.NormalApprox} are replaced by difference operators that are benign in a certain way. \\
To clarify this description, we denote by $T(\eta)$ the simplicial complex constructed from a counting measure $\eta\in\NDM$ as explained in Section \ref{Sec:Definition_RCM}.
In doing so, we refrain from formally defining a space of simplicial complexes, which is possible but not necessary here.
For a function $f$ defined for simplicial complexes with vertices in $\X$, the first-order difference operator $D_xf$ at a point $(x,u)\in\X\times\MM$ is defined by
\begin{align} \label{classical_1storder_DiffOp}
    D_{(x,u)}f \,:=\, f\big(T(\eta+\delta_{(x,u)})\big) - f(T(\eta)).
\end{align}
The difference operator $D_xf$ measures the influence of the added point $(x,u)$ on the functional.
Since the added point can influence the assignment of the marks to the possible simplices (see Section \ref{Sec:Definition_RCM}) of the simplicial complex, in general, $f(T(\eta))$ is not a subcomplex of $f(T(\eta+\delta_{(x,u)}))$.
This significantly complicates working with these operators.
Theorems 1.1 and 1.2 from \cite{Last.NormalApprox} were already transferred to other operators in the RCM (see Theorem 6.1 in \cite{Last.RCM}), but here we will use an even more general construction.
\begin{figure}[t!]
  \captionsetup{ labelfont = {bf}, format = plain }
  \centering
  \subfloat[][]{\includegraphics[width=0.31\linewidth]{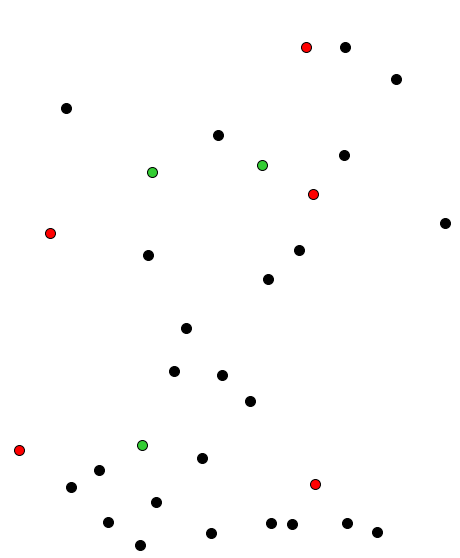}}
  \quad
  \subfloat[][]{\includegraphics[width=0.31\linewidth]{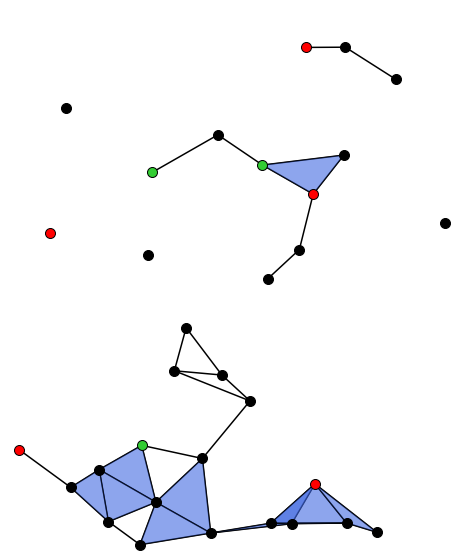}}
  \quad
  \subfloat[][]{\includegraphics[width=0.31\linewidth]{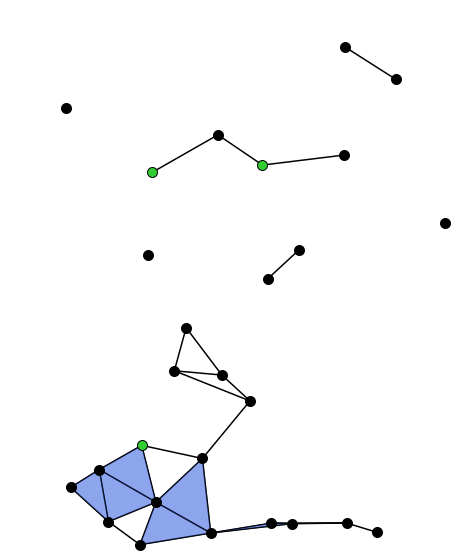}}
  \caption{The construction of the simplicial complex $\Delta^{x_1,\dots,x_l,I}$ in three steps for $l=8$ and $I=\{1,2,3\}$. A realization (a) of the Poisson process (black points) together with the additional points $x_1,x_2,x_3$ (green) and $x_4\dots,x_8$ (red), the simplicial complex $\Delta^{x_1,\dots,x_l}$ (b) constructed from all points and the subcomplex $\Delta^{x_1,\dots,x_l,I}$ (c) after removing the red points.}
  \label{fig:Construction_DiffOp}
\end{figure}
Let $x_1,\dots,x_l\in\X$, $l\in\N$, and $U_1,\dots,U_l\sim\Q$ be i.i.d. random marks.
For a subset $I\subseteq\{1,\dots,l\}$, we denote by $\Delta^{x_1,\dots,x_l,I}$ the simplicial complex obtained by removing from $\Delta^{x_1,\dots,x_l}:=T(\Psi+\sum_{i=1}^l\delta_{(x_i,U_i)})$ all simplices that contain a vertex $x_j$ with $j\in\{1,\dots,l\}\setminus I$.
The additional points $x_j$, $j\in\{1,\dots,l\}\setminus I$, are thus initially used in the construction of the simplicial complex but are then removed from it again.
As a result, the simplicial complexes $\Delta^{x_1,\dots,x_l,I}$ are always subcomplexes of $\Delta^{x_1,\dots,x_l}$, which makes them, in a certain sense, coupled to each other.
Figure \ref{fig:Construction_DiffOp} illustrates the construction of the simplicial complexes $\Delta^{x_1,\dots,x_l,I}$ in three steps.
First, we add the points $x_1, \dots, x_l$ to the Poisson process.
Next, we build the simplicial complex $\Delta^{x_1,\dots,x_l}$ containing all points.
Finally, we remove the points $x_j$ for $j \in \{1, \dots, l\} \setminus I$.
While $\Delta^{x_1,\dots,x_l,I}\neq \Delta^{x_{i_1},\dots,x_{i_m}}$ for $I=\{i_1,\dots,i_m\}$ in general, they are identical in distribution.
For a function $f$ defined for simplicial complexes with vertices in $\X$, we define for $k\leq l$ the $k$-th order difference operator
\begin{align} \label{verallgDiffOp}
\Lambda_{(x_1,U_1),\dots,(x_l,U_l)}^k f(\Delta) \,:=\, \sum_{I\subseteq\{1,\dots,k\}} (-1)^{k-|I|} \, f\big( \Delta^{x_1,\dots,x_l,I} \big).
\end{align}
Additional points that do not appear in any simplicial complex are introduced to facilitate the consideration of products of operators from (\ref{verallgDiffOp}).
For example, regarding a random variable of the form $\Lambda^1_{(x_1,U_1),(x_2,U_2)}f(\Delta)\Lambda^1_{(x_2,U_2),(x_1,U_1)}f(\Delta)$, all simplicial complexes appearing here, according to (\ref{verallgDiffOp}), are subcomplexes of $\Delta^{x_1,x_2}$.
As a result, calculations with these quantities are greatly simplified by this construction.
Additionally, it also facilitates the proof of Theorem \ref{Th:myRCM}.
To demonstrate the key property of the operators from (\ref{verallgDiffOp}), we also consider higher-order difference operators associated with the operators from (\ref{classical_1storder_DiffOp}), given by
\begin{align} \label{classical_DiffOp}
    D_{(x_1,U_1),\dots,(x_k,U_k)}^k f(\Delta) \,:=\, \sum_{I\subseteq\{1,\dots,k\}} (-1)^{k-|I|} \, f\big( T(\Psi+\sum_{i\in I}\delta_{(x_i,U_i)}) \big).
\end{align}
Although these operators are often introduced differently in the literature (see, e.g., (18.2) in \cite{Last.Lectures}), they can nonetheless be expressed as in (\ref{classical_DiffOp}) (cf. (18.3) in \cite{Last.Lectures}).
To make the notation a bit more flexible, we incorporate the decomposition $\QQ$ into the notation.
For $I=\{i_1,\dots,i_{s}\}$ and $\{j_{s+1},\dots,j_l\}=\{1,\dots,l\}\setminus I$ we write
\begin{align*}
    \Lambda_{(x_i,\HK U i)_{i\in I},(x_j,\HK U j)_{j\in [l]\setminus I}}^{|I|,\QQ} f(\Delta) \,&:=\, \Lambda_{(x_{i_1},U_{i_1}),\dots,(x_{i_s},U_{i_s}),(x_{j_{s+1}},U_{j_{s+1}}),\dots,(x_{j_l},U_{j_l})}^s f(\Delta), \\
    D_{(x_i,\HK U i)_{i\in I}}^{|I|,\QQ} f(\Delta) \,&:=\, D_{(x_{i_1},U_{i_1}),\dots,(x_{i_s},U_{i_s})}^s f(\Delta)
\end{align*}
to represent the operators from (\ref{verallgDiffOp}) and (\ref{classical_DiffOp}), respectively, where $\QQ$ is employed in constructing the simplicial complexes.
Now we fix two different decompositions $\QQ=(Q_i)_{i\in\N},\RR$ of $\X$ and denote by $m_i\in\N$ the unique index with $x_i\in Q_i$.
The crucial fact is that, whenever the sets $Q_{m_1},\dots,Q_{m_l}$ are pairwise disjoint, on the event 
\begin{align} \label{Ereignis_A_verallg}
A \,:=\, \big\{ \Phi\big( \cup_{i=1}^l Q_{m_i} \big)=0 \big\}
\end{align}
it is true that
\begin{align} \label{verallgDiffOp_verteilungsgleich}
\1_A \Big( D_{(x_i,\HK U i)_{i\in I}}^{|I|,\QQ} f(\Delta) \Big)_{\emptyset\neq I\subseteq [l]} \,\stackrel{d}{=}\, \1_A \Big( \Lambda_{(x_i,\HK U i)_{i\in I},(x_j,\HK U j)_{j\in [l]\setminus I}}^{|I|,\RR} f(\Delta) \Big)_{\emptyset\neq I\subseteq [l]}
\end{align}
as in this case all simplicial complexes appearing in the definition of the operators on the left side are subcomplexes of $T(\Psi+\sum_{i=1}^l\delta_{(x_i,U_i)})$.
The idea now is to find a sequence of decompositions $(\QQ^m)_{m\in\N}$ of $\X$, as in the proof of Theorem 6.1 in \cite{Last.RCM}, such that $\P(A^c)$ vanishes for $m\rightarrow\infty$.
For this purpose we choose $\QQ^m=(Q_k^m )_{k\in\N}$ such that
\begin{align} \label{epsilon_Zerlegung}
    \lambda(Q_k^m) \,\leq\, \frac{1}{m}, \qquad Q^{m+1}(x)\subseteq Q^m(x), \qquad k,m\in\N, \; x\in\X
\end{align}
holds, where $Q^m(x)$ denotes the uniquely determined set of the decomposition $\QQ^m$ with $x\in Q^m(x)$.
In Section 4 of \cite{Last.RCM} it is shown that such a sequence of decompositions exists on Borel spaces.
All preparations for deriving a theorem for the difference operators $\Lambda^k$ based on the Theorems 1.1 and 1.2 in \cite{Last.NormalApprox} have been made.
The proof follows, in its basic structure, the proof of Theorem 6.1 in \cite{Last.RCM}, although the operators used differ.
We only consider functionals of the form $f(\Delta_W)$, as it will be functionals of this form for which we want to apply the theorem.
For two random variables $Z_1,Z_2$ we denote by
\begin{align*}
d_W(Z_1,Z_2) \,&:=\, \sup\limits_{h\in \mathrm{Lip(1)}} \big| \E{ h(Z_1)-h(Z_2) } \big|, \\
d_K(Z_1,Z_2) \,&:=\, \sup\limits_{t\in\R} \; | \P(Z_1\leq t) - \P(Z_2\leq t) |
\end{align*}
the Wasserstein distance and the Kolmogorov distance, respectively, where Lip(1) denotes the set of all Lipschitz continuous functions $h:\R\rightarrow\R$.
We will consider functionals of the form $f(\Delta_W)$, where $f$ is defined for finite simplicial complexes with vertices in $\X$, and $W$ is an observation window.
This can be understood as the composition $\Delta \mapsto \Delta_W \mapsto f(\Delta_W)$.

\begin{Th} \label{Th:myRCM}
Let $W\in\Bf$ and $F=f(\Delta_W)$ with $\E{F}=0$, $\V{F}=1$ and $\E{F^4}<\infty$, where $f$ is a function defined for finite simplicial complexes with vertices in $\X$.
Moreover, assume that $\gamma_5,\gamma_6<\infty$.
Then it is true that
\begin{align*}
d_W(F,N) &\,\leq\, \gamma_1 + \gamma_2 + \gamma_3, \\
d_K(F,N) &\,\leq\, \gamma_1 + \gamma_2 + \gamma_3 + \gamma_4 + \gamma_5 + \gamma_6,
\end{align*}
where the quantities $\gamma_1,\dots,\gamma_6$ are defined by
\begin{align*}
\gamma_1 &\,:=\, 2 \Bigg[ \beta^3 \int_{W^3} \E{ \left( \Lambda_{(x_1,U_1),(x_2,U_2)}^1F \right)^2 \left( \Lambda_{(x_2,U_2),(x_1,U_1)}^1F \right)^2}^\frac{1}{2} \\
& \qquad \qquad \enskip \times \, \E{ \left(\Lambda_{(x_1,U_1),(x_3,U_3),(x_2,U_2)}^2 F\right)^2 \left(\Lambda_{(x_2,U_2),(x_3,U_3),(x_1,U_1)}^2 F\right)^2}^\frac{1}{2}  \; \lambda^3 \, (\d {(x_1,x_2,x_3)}) \Bigg]^\frac{1}{2}, \\
\gamma_2 &\,:=\, \left[ \beta^3 \int_{W^3} \E{ \left(\Lambda_{(x_1,U_1),(x_3,U_3),(x_2,U_2)}^2 F\right)^2 \left(\Lambda_{(x_2,U_2),(x_3,U_3),(x_1,U_1)}^2 F\right)^2} \; \lambda^3 \, (\d {(x_1,x_2,x_3)}) \right]^\frac{1}{2}, \\
\gamma_3 &\,:=\, \beta \int_W \E{ |\Lambda_{(x,U)}F|^3 } \; \lambda \, (\d x), \\
\gamma_4 &\,:=\, \frac{\beta}{2} \E{ F^4 }^\frac{1}{4} \int_W \E{ \left(\Lambda_{(x,U)}F\right)^4 }^\frac{3}{4} \; \lambda \, (\d x), \\
\gamma_5 &\,:=\, \left[ \beta \int_W \E{ \left(\Lambda_{(x,U)}F\right)^4 } \; \lambda \, (\d x) \right]^\frac{1}{2}, \\
\gamma_6 &\,:=\, \Bigg[ \beta^2 \int_{W^2} 6\; \E{ \left(\Lambda_{(x_1,U_1)}F\right)^4 }^\frac{1}{2} \E{ \left(\Lambda_{(x_1,U_1),(x_2,U_2)}^2 F\right)^4 }^\frac{1}{2} \\
& \qquad \qquad \enskip + 3\; \E{ \left(\Lambda_{(x_1,U_1),(x_2,U_2)}^2 F\right)^4 } \; \lambda^2 \, (\d {(x_1,x_2)}) \Bigg]^\frac{1}{2}
\end{align*}
with independent marks $U,U_1,U_2,U_3\sim\Q$, which are also independent of $\Psi$.
Furthermore, we have $\gamma_1,\dots,\gamma_4<\infty$.
\end{Th}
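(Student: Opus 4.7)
The plan is to apply Theorems 1.1 and 1.2 of \cite{Last.NormalApprox} to the standardized Poisson functional $F = f(\Delta_W)$. These supply bounds on $d_W(F,N)$ and $d_K(F,N)$ of exactly the shape claimed, except that the benign operators $\Lambda^k$ are replaced by the classical Poisson difference operators $D^k$ from (\ref{classical_DiffOp}). The task is therefore to translate each of the six integrands from the $D$-formulation into the $\Lambda$-formulation via the distributional identity (\ref{verallgDiffOp_verteilungsgleich}), and then to verify finiteness of $\gamma_1,\dots,\gamma_4$.

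To carry this out, fix the sequence of decompositions $(\QQ^m)_{m\in\N}$ of $\X$ satisfying (\ref{epsilon_Zerlegung}), and for each $m\in\N$ and each fixed tuple $(x_1,\dots,x_l)\in W^l$ (with $l\in\{1,2,3\}$ depending on the integrand) introduce the event
\[ A_m \,:=\, \Big\{ \Phi\Big(\bigcup\nolimits_{i=1}^l Q^m(x_i)\Big) = 0 \Big\}. \]
Since the distribution of $\Delta$ is unaffected by the decomposition used in Section \ref{Sec:Definition_RCM}, the quantities produced by \cite{Last.NormalApprox} may equivalently be computed with $\QQ^m$ in place of the fixed partition $\QQ$. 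Split every integrand as $\1_{A_m}+\1_{A_m^c}$. For $\lambda^l$-almost every $(x_1,\dots,x_l)$ the cells $Q^m(x_1),\dots,Q^m(x_l)$ become pairwise disjoint once $m$ is large enough; on $A_m$ the joint distributional identity (\ref{verallgDiffOp_verteilungsgleich}) with $\QQ=\QQ^m$ and reference decomposition $\RR$ taken to be the partition fixed in Section \ref{Sec:Definition_RCM} replaces the full vector of $D^{|I|,\QQ^m}$-operators by the corresponding vector of $\Lambda^{|I|,\RR}$-operators in distribution. Consequently every $\1_{A_m}$-expectation coincides with the analogous polynomial expression in the $\Lambda$-operators. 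The complementary $\1_{A_m^c}$-contribution is controlled by Cauchy--Schwarz combined with the crude pointwise bound $|D^{k,\QQ^m}_{\cdots}F|\leq 2^k \max_{J\subseteq\{1,\ldots,k\}}|f(T(\Psi+\sum_{i\in J}\delta_{(x_i,U_i)}))|$, whose fourth moment is finite by the multivariate Mecke equation and the hypothesis $\E{F^4}<\infty$. Since $\P(A_m^c)\to 0$ as $m\to\infty$, the remainder vanishes, and dominated convergence yields the stated bound in terms of the $\Lambda$-operators.

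For the finiteness of $\gamma_1,\dots,\gamma_4$, observe that each $\Lambda^{|I|,\RR}_{\cdots}F$ is a signed sum of at most $2^l$ terms of the form $f(\Delta^{x_1,\dots,x_l,J})$ restricted to $W$, so pointwise it is bounded in absolute value by $2^l$ times the maximum of these terms. Each of them has the same distribution as $f(\Delta_W)$ when the Poisson process is enriched by at most $l$ deterministic marked points, so the multivariate Mecke equation bounds the integrated $p$-th moment over $W^l$ by a constant multiple of $|W|^l\,\E{F^p}$ for $p\leq 4$, which combined with the explicit powers of $\beta$ and $|W|<\infty$ yields $\gamma_1,\dots,\gamma_4<\infty$. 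The main obstacle is the careful application of (\ref{verallgDiffOp_verteilungsgleich}) in its correct joint form so that the asymmetric orderings appearing in $\gamma_1$ and $\gamma_2$, such as $(\Lambda^1_{(x_1,U_1),(x_2,U_2)}F)^2(\Lambda^1_{(x_2,U_2),(x_1,U_1)}F)^2$, emerge with the right indexing of points, and to verify that the $\1_{A_m^c}$-contribution decays uniformly enough on $W^l$ for the limit $m\to\infty$ to pass through the integration with respect to $\lambda^l$.
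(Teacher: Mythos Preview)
Your overall architecture is right, but the integrability step is a genuine gap. You claim that the multivariate Mecke equation bounds
\[
\int_{W^l}\E{\big|f\big(T(\Psi+\textstyle\sum_{i\in J}\delta_{(x_i,U_i)})\big)\big|^p}\,\lambda^l(\d x)
\]
by a constant multiple of $|W|^l\,\E{|F|^p}$. Mecke gives instead $\beta^{-|J|}|W|^{l-|J|}\,\E{\Phi(W)^{(|J|)}\,|F|^p}$, and since $\Phi(W)$ and $F$ are correlated this cannot be controlled by $\E{F^4}$ alone (Cauchy--Schwarz would require $\E{F^8}<\infty$). The same defect undermines your $\1_{A_m^c}$ remainder estimate and, incidentally, the unstated verification of the domain condition in Theorems 1.1 and 1.2 of \cite{Last.NormalApprox} when $f$ is unbounded.

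The paper closes this gap by a two-stage approximation. First it treats bounded $f$, where all integrands are trivially dominated and the passage $\gamma_i^*(m)\to\gamma_i$ goes through by dominated convergence. Then for general $f$ it truncates via $f_n$ as in (\ref{Definition_approxFct}) and uses the pointwise inequalities (\ref{NA3}),\,(\ref{NA4}), which bound $|\Lambda^k F_n|$ by linear combinations of $|\Lambda^j F|$ for $j\leq k$. These right-hand sides are integrable precisely because of the hypotheses $\gamma_5,\gamma_6<\infty$, and \emph{that} is how $\gamma_1,\dots,\gamma_4<\infty$ is obtained. Your argument bypasses both the truncation and the role of $\gamma_5,\gamma_6$, and as a result the finiteness claims do not follow from the stated hypotheses.
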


\begin{proof}
For $m\in\N$ we fix a decomposition $\QQ^m$ with the properties from (\ref{epsilon_Zerlegung}) and denote by $F_m$ the functional $f(\Delta_W)$, where the simplicial complex is constructed with respect to the decomposition $\QQ^m$. 
Initially, we assume $f$ is bounded.
We apply the Theorems 1.1 and 1.2 in \cite{Last.NormalApprox} to the Poisson process $\Psi_W$ and the Poisson functional $F_m$ with $F_m\stackrel{d}{=}F$.
Note that the theorems are applicable since the integrability condition is satisfied due to the boundedness of $f$ and $|W|<\infty$. 
Using Hölder's inequality, we move the integration with respect to the mark distribution into the expected values and obtain
\begin{align*}
d_W(F,N) \,&\leq\, \gamma_1^*(m) + \gamma_2^*(m) + \gamma_3^*(m), \\
d_K(F,N) \,&\leq\, \gamma_1^*(m) + \gamma_2^*(m) + \gamma_3^*(m) + \gamma_4^*(m) + \gamma_5^*(m) + \gamma_6^*(m),
\end{align*}
with the quantities
\begin{align*}
\gamma_1^*(m) &\,:=\, 2 \Bigg[ \beta^3 \int_{W^3} \E{\left( D_{(x_1,U_1)}F_m \right)^2 \left( D_{(x_2,U_2)} F_m \right)^2 }^\frac{1}{2} \\ 
& \qquad \qquad \qquad \E{ \left( D_{(x_1,U_1),(x_3,U_3)}^2 F_m \right)^2 \left( D_{(x_2,U_2),(x_3,U_3)}^2 F_m\right)^2 }^\frac{1}{2}  \; \lambda^3 \, (\d {(x_1,x_2,x_3)})  \Bigg]^\frac{1}{2}, \\
\gamma_2^*(m) &\,:=\, \left[ \beta^3 \int_{W^3} \E{ \left( D_{(x_1,U_1),(x_3,D_3)}^2 F_m\right)^2 \left( D_{(x_2,D_2),(x_3,U_3)}^2 F_m\right)^2 }\; \lambda^3 \, (\d {(x_1,x_2,x_3)}) \right]^\frac{1}{2}, \\
\gamma_3^*(m) &\,:=\, \beta \int_W \E{ |D_{(x,U)}F_m|^3 } \; \lambda \, (\d x), \\
\gamma_4^*(m) &\,:=\, \frac{\beta}{2} \E{ F_m^4 }^\frac{1}{4} \int_W \E{ \left( D_{(x,U)}F_m\right)^4 }^\frac{3}{4} \; \lambda \, (\d x), \\
\gamma_5^*(m) &\,:=\, \left[ \beta \int_W \E{ \left( D_{(x,U)}F_m\right)^4 } \; \lambda \, (\d x) \right]^\frac{1}{2}, \\
\gamma_6^*(m) &\,:=\, \Bigg[ \beta^2 \int_{W^2} 6\; \E{ \left( D_{(x_1,U_1)} F_m \right)^4 }^\frac{1}{2} \E{ \left( D_{(x_1,U_1),(x_2,U_2)}^2 F_m\right)^4 }^\frac{1}{2} \\ 
& \qquad \qquad \qquad + 3\; \E{ \left( D_{(x_1,U_1),(x_2,U_2)}^2 F_m\right)^4 } \; \lambda^2 \, (\d {(x_1,x_2)}) \Bigg]^\frac{1}{2}.
\end{align*}
If $A_m$ denotes the event from (\ref{Ereignis_A_verallg}) when using the decomposition $Q_m$, then by (\ref{epsilon_Zerlegung}), the convergence $\P(A_m^c)\rightarrow 0$ follows.
Due to property (\ref{verallgDiffOp_verteilungsgleich}) and the boundedness of $f$, the integrand in $\gamma_i^*(m)$ converges pointwise almost everywhere to the integrand of $\gamma_i$ for $i\in\{1,\dots,6\}$.
Since the integrands are themselves bounded due to the boundedness of $f$ and $|W|<\infty$, the dominated convergence theorem shows $\gamma_i^*(m)\rightarrow\gamma_i$ for $m\rightarrow\infty$. \\ 
In the following, we do not assume that $f$ and thus $F$ are bounded anymore. 
For $n\in \N$, we define a bounded functional $F_n:=f_n(\Delta_W)$ by
\begin{align} \label{Definition_approxFct}
f_n(\Delta_W) \,:=\, \1\big\{ |f(\Delta_W)|\leq n \big\}\, f(\Delta_W) + \1\big\{ f(\Delta_W)> n \big\}\, n - \1\big\{ f(\Delta_W)< -n \big\}\, n.
\end{align}
Then $F_n\rightarrow F$ holds as $n\rightarrow\infty$ both $\P$-almost surely and in $L^4(\P)$.
We define $\gamma_i^{(n)}$ as $\gamma_i$, where the functional $F$ is replaced by $F_n-\E{F_n}$. 
The term $\E{F_n}$ has no impact on the difference operators $\Lambda^k$, as these are linear and vanish on constants. 
To show the convergence of $\gamma_i^{(n)}$ towards $\gamma_i$ as $n\rightarrow\infty$ and $i\in\{1,\dots,k\}$, we use the estimates
\begin{align} 
|\Lambda_{(x_1,U_1),\dots,(x_l,U_l)}^1F_n| \,&\leq\, |\Lambda_{(x_1,U_1),\dots,(x_l,U_l)}^1F| + 2|\Lambda_{(x_1,U_1),\dots,(x_l,U_l)}^0F|, \label{NA3} \\
|\Lambda_{(x_1,U_1),\dots,(x_l,U_l)}^2F_n| \,&\leq\, |\Lambda_{(x_1,U_1),\dots,(x_l,U_l)}^2F| + 2|\Lambda_{(x_1,U_1),\dots,(x_l,U_l)}^1F| \nonumber \\
& \qquad + 2|\Lambda_{(x_2,U_2),(x_1,U_1),(x_3,U_3),\dots,(x_l,U_l)}^1F| + 4|\Lambda_{(x_1,U_1),\dots,(x_l,U_l)}^0F|, \label{NA4}
\end{align}
which can be shown in an identical manner as the analogous estimates (6.6) and (6.7) in \cite{Last.RCM}.
The integrability condition $\gamma_5,\gamma_6<\infty$ also implies the finiteness of $\gamma_1,\gamma_2,\gamma_3,\gamma_4$.
Hence, in (\ref{NA3}) and (\ref{NA4}), we obtain integrable upper bounds for the integrands from the quantities $\gamma_1^{(n)},\dots,\gamma_6^{(n)}$ for almost all added points.
Note that
\begin{align*}
    \Lambda_{(x_1,U_1),\dots,(x_l,U_l)}^k g(\Delta) \,\stackrel{d}{=}\, \Lambda_{(x_1,U_1),\dots,(x_k,U_k)}^k g(\Delta) 
\end{align*}
holds for any function $g$ and $k\leq l$.
Therefore, the estimates (\ref{NA3}),(\ref{NA4}) allow for an application of the dominated convergence theorem to see that the integrands converge almost everywhere to the corresponding integrands in the quantities $\gamma_1,\dots,\gamma_6$.
A further application of the dominated convergence theorem, together with $\E{ (F_n-\mathbb{E}[F_n])^4 }\rightarrow\E{F^4}$, yields
\begin{align} \label{Konvergenz_gamma_n}
\lim\limits_{n\rightarrow\infty} \gamma_i^{(n)} \,=\, \gamma_i, \qquad i\in\{1,\dots,6\}.
\end{align}
We define
\begin{align*}
\widetilde{F}_n \,:=\, \frac{F_n-\E{F_n}}{\sqrt{\V{F_n}}}
\end{align*}
for all $n\in\N$ with $\V{F_n}>0$.
For these functionals, the statement of the theorem holds according to the first part of the proof for bounded functionals.
The desired statement now follows from Lemma 6.2 in \cite{Last.RCM} as well as (\ref{Konvergenz_gamma_n}) and $\mathbb{V}(F_n)\rightarrow\V{F}=1$.
\end{proof}

\subsection{Further tools}

To estimate the quantities from Theorem \ref{Th:myRCM}, we need, among other things, to estimate the fourth moment of the functional under consideration.
This can actually be done manually for the Euler characteristic, as shown in detail in the proof of Proposition 4.5 of \cite{Pabst.Thesis}.
This proof, primarily combinatorial in nature, demonstrates that the fourth centered moment of the Euler characteristic is a linear combination of integral representations of simplicial complexes with at most two connected components and no more than $4\alpha+2$ vertices.
However, if we are only interested in an appropriate estimate for proving a central limit theorem, the labor-intensive proof can be bypassed using Lemma 4.2 in \cite{Last.NormalApprox}.

\begin{Kor} \label{Kor:4thMoment}
    In the situation of Theorem \ref{Th:myRCM} we have
    \begin{align*}
        \E{F^4} \,\leq\, \max\bigg\{ 256\left[ \beta\int_W \E{ \left(\Lambda_{(x,U)}F\right)^4 }^\frac{1}2 \; \lambda \, (\d x)  \right]^2, 4 \beta \int_W \E{ \left(\Lambda_{(x,U)}F\right)^4 } \; \lambda \, (\d x) + 2 \bigg\}.
    \end{align*}
\end{Kor}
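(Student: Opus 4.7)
The plan is to apply Lemma 4.2 from \cite{Last.NormalApprox}, which bounds the fourth centered moment of a Poisson functional in terms of the classical first-order difference operators $D_{(x,U)}$, and then transfer the resulting estimate to the modified operators $\Lambda_{(x,U)}$ by repeating the coupling-and-refinement argument already used in the proof of Theorem \ref{Th:myRCM}. Since Lemma 4.2 of \cite{Last.NormalApprox} features an additive correction term of the form $2\,\V{F}^2$, the normalization $\V{F}=1$ produces exactly the constant $2$ appearing in the claim.

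Concretely, for each $m\in\N$ I would fix a decomposition $\QQ^m$ satisfying (\ref{epsilon_Zerlegung}) and let $F_m$ denote the functional $f(\Delta_W)$ constructed with respect to $\QQ^m$. Because $F_m\stackrel{d}{=}F$, in particular $\V{F_m}=1$ and $\E{F_m^4}=\E{F^4}$. Applying Lemma 4.2 of \cite{Last.NormalApprox} to the Poisson process $\Psi_W$ and the functional $F_m$ — first for the bounded truncations $F_{m,n}:=f_n(\Delta_W)$ from (\ref{Definition_approxFct}), then passing $n\to\infty$ in the same fashion as in the second half of the proof of Theorem \ref{Th:myRCM} — produces the bound
\begin{align*}
\E{F^4} \,\leq\, \max\bigg\{ 256\Big[\beta\int_W \E{\left(D_{(x,U)}F_m\right)^4}^{\frac{1}{2}} \lambda(\d x)\Big]^2,\; 4\beta\int_W \E{\left(D_{(x,U)}F_m\right)^4} \lambda(\d x) + 2 \bigg\}.
\end{align*}

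To complete the argument I would let $m\to\infty$. On the event $A_m$ from (\ref{Ereignis_A_verallg}), property (\ref{verallgDiffOp_verteilungsgleich}) gives $D_{(x,U)}F_m \stackrel{d}{=} \Lambda_{(x,U)}F$ for $\lambda$-almost every $x\in W$, while $\P(A_m^c)\to 0$ by (\ref{epsilon_Zerlegung}). Combined with the pointwise estimate (\ref{NA3}) supplying integrable majorants, two applications of dominated convergence — first inside the expectations and then in the integrals over $W$ — yield $\E{(D_{(x,U)}F_m)^4}\to\E{(\Lambda_{(x,U)}F)^4}$ and then convergence of both maximands to the corresponding expressions in the claim.

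The main obstacle is pure bookkeeping: combining the two limiting procedures ($n\to\infty$ to remove truncation and $m\to\infty$ to switch from $D$ to $\Lambda$) so that the constant $2=2\V{F}^2$ is preserved, and ensuring that both sides of the maximum transfer simultaneously. Since both limit passages have been carried out in full detail in the proof of Theorem \ref{Th:myRCM}, no genuinely new technical ingredient is required, and the statement follows from the abstract $L^4$-bound of \cite{Last.NormalApprox} together with that reduction scheme.
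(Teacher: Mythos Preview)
Your strategy coincides with the paper's: invoke Lemma~4.2 of \cite{Last.NormalApprox} and transfer the bound from $D$ to $\Lambda$ via the decomposition--truncation machinery of Theorem~\ref{Th:myRCM}. The only issue is the order in which you run the two limits. The paper first treats bounded $f$: apply Lemma~4.2 to $F_m$ and let $m\to\infty$, using the boundedness of $f$ together with $|W|<\infty$ as the dominating function, exactly as in the first half of the proof of Theorem~\ref{Th:myRCM}; this already yields the claimed inequality with $\Lambda$-operators. Only then, for general $f$, does one truncate to $F_n$, apply the bounded case (so the right-hand side now features $\Lambda$-operators of $F_n$), and let $n\to\infty$ using (\ref{NA3}) together with $\E{F_n^4}\to\E{F^4}$.

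In your ordering the required majorants are not available. Passing $n\to\infty$ for fixed $m$ ``in the same fashion as in the second half'' would need an integrable bound on $(D_{(x,U)}F_m)^4$, but the hypotheses $\gamma_5,\gamma_6<\infty$ concern $\Lambda$, and off the event $A_m$ there is no a~priori control of $D_{(x,U)}F_m$ in terms of $\Lambda_{(x,U)}F$. Likewise, estimate (\ref{NA3}) compares $\Lambda F_n$ with $\Lambda F$; it does not dominate $D_{(x,U)}F_m$ uniformly in $m$, so it cannot serve as the majorant for your $m\to\infty$ step once boundedness has already been discarded. Swapping the order of the two limits as above removes both difficulties, and the corollary then follows exactly as you anticipate.
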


\begin{proof}
    For bounded functions $f$, the statement follows from Lemma 4.2 in \cite{Last.NormalApprox} in the same way that Theorem \ref{Th:myRCM} follows from Theorems 1.1 and 1.2 in \cite{Last.NormalApprox}.
    In the general case, we first apply the statement for the functionals $F_n = f_n(\Delta_W)$, $n \in\N$, as defined in (\ref{Definition_approxFct}).
    Letting $\gamma_n$ denote the upper bound obtained with respect to $F_n$, we can demonstrate that $\gamma_n \rightarrow \gamma$ as $n \rightarrow \infty$, exactly as (\ref{Konvergenz_gamma_n}) was shown in the proof of Theorem \ref{Th:myRCM}, where $\gamma$ is the right-hand side of the claimed statement.
    This now follows directly from $\E{F_n^4} \rightarrow \E{F^4}$ as $n \rightarrow \infty$.
\end{proof}

To estimate the quantities $\gamma_1,\dots,\gamma_6$ from Theorem \ref{Th:myRCM} for the (standardized) Euler characteristic, we need to establish lower bounds for the variance of the Euler characteristic.
For this purpose, we want to prove a Fock space representation for the variance of Poisson functionals concerning the operators from (\ref{verallgDiffOp}) instead of the ones from (\ref{classical_DiffOp}) (Theorem 18.6 in \cite{Last.Lectures}).
In fact, it is even possible to show an inequality of this form, which provides a lower bound for the variance, for general functionals as in Theorem \ref{Th:myRCM} (see Theorem 3.15 in \cite{Pabst.Thesis}).
However, this proof is very technical, which is why we prefer to simply recalculate the Fock space representation for the Euler characteristic here.
Before that, we would like to take a look at the difference operators from (\ref{verallgDiffOp}) for the Euler characteristic.
To do so, we denote by $f_i^{x_1,\dots,x_k}(K)$ the number of $i$-simplices in $K$ that include the points $x_1,\dots,x_k$.

\begin{Lem} \label{Lem:EulerDiffOp}
For $k\leq l$ and pairwise different $x_1,\dots,x_k\in\X$, it holds $\P$-almost surely that
\begin{align*}
\Lambda_{(x_1,U_1),\dots,(x_l,U_l)}^k \chi_a(\Delta_W) \,=\, \sum_{i=k-1}^\alpha a_i \, f_i^{x_1,\dots,x_k} \big(\Delta_W^{x_1,\dots,x_l}\big).
\end{align*}
\end{Lem}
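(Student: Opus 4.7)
The plan is to reduce the lemma to a single binomial inclusion--exclusion identity. First I would unfold the definition of $\Lambda^k$ from~(\ref{verallgDiffOp}), obtaining
\begin{align*}
\Lambda_{(x_1,U_1),\dots,(x_l,U_l)}^k \chi_a(\Delta_W) \;=\; \sum_{I\subseteq\{1,\dots,k\}} (-1)^{k-|I|}\,\chi_a\bigl(\Delta_W^{x_1,\dots,x_l,I}\bigr),
\end{align*}
and expand the Euler characteristic as $\chi_a(\cdot)=\sum_{i} a_i f_i(\cdot)$. Since $\Delta_W^{x_1,\dots,x_l,I}$ is the subcomplex of $\Delta_W^{x_1,\dots,x_l}$ obtained by deleting every simplex meeting $\{x_j:j\in\{1,\dots,l\}\setminus I\}$, I would rewrite each $f_i(\Delta_W^{x_1,\dots,x_l,I})$ as a sum over the $i$-simplices $\sigma$ of $\Delta_W^{x_1,\dots,x_l}$, weighted by the indicator that $J(\sigma):=\{j\in\{1,\dots,l\}:x_j\in\sigma\}\subseteq I$.

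The key step is to interchange the two finite sums so that the outer one runs over $i$-simplices $\sigma$ of $\Delta_W^{x_1,\dots,x_l}$ and the inner one is an alternating sum in $I\subseteq\{1,\dots,k\}$. For $J(\sigma)\not\subseteq\{1,\dots,k\}$ the inner range is empty, and for $J(\sigma)\subseteq\{1,\dots,k\}$ the standard binomial identity
\begin{align*}
\sum_{J(\sigma)\subseteq I\subseteq\{1,\dots,k\}}(-1)^{k-|I|} \;=\; \sum_{t=0}^{k-|J(\sigma)|}\binom{k-|J(\sigma)|}{t}(-1)^{k-|J(\sigma)|-t} \;=\; \1\{|J(\sigma)|=k\}
\end{align*}
collapses the inner sum to the indicator that $\sigma$ contains precisely the points $x_1,\dots,x_k$ among the added vertices. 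Collecting terms then yields $\sum_{i} a_i\,f_i^{x_1,\dots,x_k}(\Delta_W^{x_1,\dots,x_l})$, and the lower bound $i\geq k-1$ is automatic because an $i$-simplex needs $|\sigma|=i+1\geq k$ vertices in order to contain the $k$ pairwise different prescribed points; the upper bound $i\leq\alpha$ is just the dimensional cap of the model.

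The argument is essentially combinatorial, so I do not anticipate a genuine obstacle: the only point requiring care is the bookkeeping of simplices in the signed sum and the correct identification of the binomial cancellation. The almost-sure qualifier is handled by restricting to the probability-one event on which all finitely many points involved are pairwise comparable under $\prec$, which is guaranteed by the assumption $\lambda([x])=0$ for every $x\in\X$ together with the $\sigma$-finiteness of $\lambda$, so that the construction in Section~\ref{Sec:Definition_RCM} is well defined for the enlarged configuration $\Psi+\sum_{i=1}^{l}\delta_{(x_i,U_i)}$.
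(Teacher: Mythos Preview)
Your proposal is correct and follows essentially the same approach as the paper: both unfold the definition of $\Lambda^k$, expand $\chi_a$ as a sum of simplex counts, and then apply the binomial inclusion--exclusion identity to the alternating sum over $I\subseteq\{1,\dots,k\}$ to isolate precisely those simplices of $\Delta_W^{x_1,\dots,x_l}$ that contain all of $x_1,\dots,x_k$ and none of $x_{k+1},\dots,x_l$. Your treatment of the case $J(\sigma)\not\subseteq\{1,\dots,k\}$ and of the almost-sure qualifier is slightly more explicit than the paper's, but the argument is the same.
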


\begin{proof}
All transformations within this proof are to be understood $\P$-almost surely.
First, by definition of the difference operators and the Euler characteristic, we have
\begin{align} \label{EulerDiff}
\Lambda_{(x_1,U_1),\dots,(x_l,U_l)}^k \chi_a(\Delta_W) \,&=\, \sum_{I\subseteq \{1,\dots,k\}} (-1)^{k-|I|} \; \chi_a\big(\Delta_W^{x_1,\dots,x_l,I}\big) \nonumber \\
&=\, \sum_{i=0}^\alpha a_i \sum_{I\subseteq \{1,\dots,k\}} (-1)^{k-|I|} \; f_i\big(\Delta_W^{x_1,\dots,x_l,I}\big).
\end{align}
In the next step, we fix a simplex $\sigma\in\Delta_W^{x_1,\dots,x_l}$ with $x_j\notin\sigma$ for all $j\in\{k+1,\dots,l\}$ and consider how often this simplex is counted by the above sum.
To accomplish this, let $J\subseteq\{1,\dots,k\}$ denote the unique set with $x_j\in \sigma$ for all $j\in J$ and $x_j\notin \sigma$ for all $j\notin J$.
The simplex $\sigma$ is counted in the summand for $i=\dim(\sigma)$ and $I\subseteq\{1,\dots,k\}$ in (\ref{EulerDiff}) if and only if $J\subseteq I$ holds.
This means that $\sigma$ is counted for $i=\dim(\sigma)$ in the second sum of (\ref{EulerDiff}) exactly
\begin{align*}
\sum_{I\subseteq \{1,\dots,k\}} (-1)^{k-|I|} \; \1\{ J\subseteq I \} &= \sum_{J\subseteq I\subseteq \{1,\dots,k\}} (-1)^{k-|I|} = \sum_{I\subseteq\{1,\dots,k-|J|\}} (-1)^{k-|J|-|I|} \\
&= \sum_{l=0}^{k-|J|} \binom {k-|J|} l (-1)^{k-|J|-l} =
\begin{cases}
1, & J=\{1,\dots,k\}, \\
0, & \text{otherwise}, \\
\end{cases}
\end{align*}
times.
Thus, the operator $\Lambda_{(x_1,U_1),\dots,(x_l,U_l)}^k \chi_a(\Delta_W)$ counts only those simplices of $\Delta_W^{x_1,\dots,x_l}$ (with the corresponding prefactor) that include all added points $x_1,\dots,x_k$, but none of the points $x_{k+1},\dots,x_l$.
The assertion follows since such a simplex must have at least dimension $k-1$.
\end{proof}

In the next step, we derive a Fock space representation (cf. Theorem 18.6 in \cite{Last.Lectures}) for the variance of the Euler characteristic with respect to the operators from (\ref{verallgDiffOp}).

\begin{Prop} \label{Prop:EulerFock}
The variance $\V{\chi_a(\Delta_W)}$ is equal to
\begin{align*}
\sum_{k=1}^\infty \frac{\beta^k}{k!} \int_{W^k} \E{ \mathbb{E}\Big[ \Lambda_{(x_1,U_1,\dots,(x_k,U_k)}^{k} \chi_a(\Delta_W) \;\big|\; \1\big\{ [x_1,\dots,x_k]\in\Delta_W^{x_1,\dots,x_k} \big\} \Big]^2 } \; \lambda^k \, (\d {(x_1,\dots,x_k})).
\end{align*}
\end{Prop}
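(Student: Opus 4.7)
The plan is to compute the right-hand side of the claim directly and match it term-by-term against the variance formula from Corollary \ref{Kor:Moments_Euler}. Applying Lemma \ref{Lem:EulerDiffOp} with $l=k$ gives
\begin{align*}
\Lambda^k_{(x_1,U_1),\dots,(x_k,U_k)} \chi_a(\Delta_W) \,=\, \sum_{i=k-1}^\alpha a_i\, f_i^{x_1,\dots,x_k}\bigl(\Delta_W^{x_1,\dots,x_k}\bigr),
\end{align*}
so this operator counts only simplices of $\Delta_W^{x_1,\dots,x_k}$ that contain \emph{all} of $x_1,\dots,x_k$. Since simplicial complexes are closed under taking subsets, no such simplex can be present unless its face $[x_1,\dots,x_k]$ itself is. Writing $B_k$ for the indicator $\1\{[x_1,\dots,x_k]\in\Delta_W^{x_1,\dots,x_k}\}$, this means $\Lambda^k \chi_a(\Delta_W) = B_k\cdot \Lambda^k \chi_a(\Delta_W)$ almost surely, and a short tower-property computation yields
\begin{align*}
\E{ \bigl(\mathbb{E}[\Lambda^k \chi_a(\Delta_W)\,\mid\, B_k]\bigr)^2 } \,=\, \frac{\bigl(\E{\Lambda^k \chi_a(\Delta_W)}\bigr)^2}{\P(B_k=1)},
\end{align*}
with the convention $0/0:=0$. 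Because the marks on sub-simplices of $\{x_1,\dots,x_k\}$ are i.i.d.\ uniform and independent of $\Psi$, one checks directly that $\P(B_k=1)=\kappa_{k-1}(x_1,\dots,x_k)$.

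The numerator is handled by the multivariate Mecke formula, exactly as in the proof of Proposition \ref{Prop:Cov}: each $\E{f_i^{x_1,\dots,x_k}(\Delta_W^{x_1,\dots,x_k})}$ equals $\tfrac{\beta^{i-k+1}}{(i-k+1)!}\int_{W^{i-k+1}} \kappa_i(x_1,\dots,x_k,y_1,\dots,y_{i-k+1})\,\lambda^{i-k+1}(\mathrm{d}y)$. Substituting into the right-hand side of the claim and expanding the square produces a sum indexed by $k\geq 1$ and $s,t\in\{0,\dots,\alpha-k+1\}$ of integrals of $\kappa_{k-1+s}(x,y)\,\kappa_{k-1+t}(x,z)/\kappa_{k-1}(x)$. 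The crucial algebraic identity is
\begin{align*}
\kappa_{k-1+s}(x_1,\dots,x_k,y_1,\dots,y_s)\,\kappa_{k-1+t}(x_1,\dots,x_k,z_1,\dots,z_t) \,=\, \kappa_{k-1}(x_1,\dots,x_k)\,\kappa^{k+s+t}_{k+s,k+t}(x,y,z),
\end{align*}
which holds because the left-hand side double-counts exactly the $\varphi$-factors indexed by subsets of $\{x_1,\dots,x_k\}$; both sides vanish simultaneously when $\kappa_{k-1}(x)=0$, so the quotient convention is consistent. After cancellation each integrand becomes the simplex function of $K^{k+s+t}_{k+s,k+t}$, and the nested integrals collapse into $\z{k+s}{k+t}{k+s+t}(W)$.

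The final step is a bookkeeping substitution $m:=k+s$, $l:=k+t$, $r:=k+s+t$. The constraints $k\geq 1$, $s,t\geq 0$, $k+s,k+t\leq\alpha+1$ translate bijectively into $m,l\in\{1,\dots,\alpha+1\}$ together with $\max\{m,l\}\leq r\leq m+l-1$, while $1/(k!\,s!\,t!)=1/((m+l-r)!\,(r-l)!\,(r-m)!)$, so the resulting expression coincides term-by-term with the variance formula of Corollary \ref{Kor:Moments_Euler}. The main technical obstacle I anticipate is the careful handling of degenerate configurations $\kappa_{k-1}(x)=0$, which is why I phrase the key identity as a true equality rather than a division; the infinite sum itself is unproblematic since $\Lambda^k\chi_a(\Delta_W)=0$ for $k>\alpha+1$, so the series is actually finite.
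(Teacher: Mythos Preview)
Your proposal is correct and follows essentially the same route as the paper's proof: both invoke Lemma~\ref{Lem:EulerDiffOp} to see that $\Lambda^k\chi_a(\Delta_W)$ vanishes off the event $\{[x_1,\dots,x_k]\in\Delta_W^{x_1,\dots,x_k}\}$, compute the relevant (conditional) expectations via the multivariate Mecke formula, use the same product identity $\kappa_{m-1}(x,y)\,\kappa_{l-1}(x,z)=\kappa_{k-1}(x)\,\kappa_{m,l}^{m+l-k}(x,y,z)$, and match against Corollary~\ref{Kor:Moments_Euler} by the substitution $k=m+l-r$. The only cosmetic difference is that you first collapse $\E{(\E{X\mid B_k})^2}$ to $(\E{X})^2/\P(B_k=1)$ and then compute unconditional expectations, whereas the paper writes out $\E{X\mid B_k=1}$ explicitly before squaring; the two computations are line-by-line equivalent.
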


\begin{proof}
First, note that, as a consequence of Lemma \ref{Lem:EulerDiffOp}, all summands of this infinite sum for $k>\alpha+1$ are zero.
From Corollary \ref{Kor:Moments_Euler}, we obtain with the substitution $k=m+l-r$
\begin{align}
\V{ \chi_a(\Delta_W) } \,&=\, \sum_{m,l=1}^{\alpha+1} \sum_{k=1}^{\min\{m,l\}} a_{m-1} a_{l-1} \frac{\beta^{m+l-k}}{(m-k)!(l-k)!k!} \, \z m l {m+l-k} (W) \nonumber \\
&=\, \sum_{k=1}^{\alpha+1} \sum_{m,l=k}^{\alpha+1} a_{m-1} a_{l-1} \frac{\beta^{m+l-k}}{(m-k)!(l-k)!k!} \, \z m l {m+l-k} (W). \label{EulerVar_rewritten}
\end{align}
We will show that for $k\leq\alpha+1$ the $k$-th summand of (\ref{EulerVar_rewritten}) coincides with the $k$-th summand of the infinite sum of the claim.
Let $k\leq\alpha+1$ be fixed.
Lemma \ref{Lem:EulerDiffOp} demonstrates that the conditional expectation appearing in the $k$-th summand of the claim is $\P$-almost surely equal to
\begin{align*}
\1\big\{ [x_1,\dots,x_k]\in\Delta_W^{x_1,\dots,x_k} \big\} \sum_{m=k}^{\alpha+1} a_{m-1} \E{ f_{m-1}^{x_1,\dots,x_k} \big(\Delta_W^{x_1,\dots,x_k}\big) \,\big|\, [x_1,\dots,x_k]\in\Delta_W^{x_1,\dots,x_k} }.
\end{align*}
For $\kappa_{k-1}(x_1,\dots,x_k)=0$ this random variable is $\P$-almost surely zero.
Otherwise an application of Mecke’s formula shows that
\begin{align}
&\E{ f_{m-1}^{x_1,\dots,x_k} \big(\Delta_W^{x_1,\dots,x_k}\big) \,\big|\, [x_1,\dots,x_k]\in\Delta_W^{x_1,\dots,x_k} } \nonumber \\
&\qquad\qquad =\, \frac{\beta^{m-k}}{(m-k)!} \int_{W^{m-k}} \frac{\kappa_{m-1}(x_1,\dots,x_k,y_{k+1},\dots,y_{m})}{\kappa_{k-1}(x_1,\dots,x_k)} \; \lambda^{m-k} \, (\d {(y_{k+1},\dots,y_{m-k})}). \label{Integral_of_help}
\end{align}
Denoting the integral from (\ref{Integral_of_help}) by $g_m(x_1,\dots,x_k)$, we obtain
\begin{align*}
&\E{ \mathbb{E}\Big[ \Lambda_{(x_1,U_1,\dots,(x_k,U_k)}^{k} \chi_a(\Delta_W) \;\big|\; \1\big\{ [x_1,\dots,x_k]\in\Delta_W^{x_1,\dots,x_k} \big\} \Big]^2 } \\
&\qquad =\, \E{ \1\big\{ [x_1,\dots,x_k]\in\Delta_W^{x_1,\dots,x_k} \big\} \sum_{m,l=k}^{\alpha+1} a_{m-1}a_{l-1}  g_m(x_1,\dots,x_k)g_l(x_1,\dots,x_k) } \\
&\qquad =\, \sum_{m,l=k}^{\alpha+1} a_{m-1}a_{l-1} \frac{\beta^{m+l-2k}}{(m-k)!(l-k)!} g_m(x_1,\dots,x_k)g_l(x_1,\dots,x_k)\kappa_{k-1}(x_1,\dots,x_k).
\end{align*}
Recognizing that according to Definition \ref{Def:Simplexfkt}
\begin{align*}
& g_m(x_1,\dots,x_k)g_l(x_1,\dots,x_k)\kappa_{k-1}(x_1,\dots,x_k) \\
&\qquad =\, \int_{W^{m+l-2k}} \kappa_{m,l}^{m+l-k}(x_1,\dots,x_k,y_{k+1},\dots,y_{m+l-k}) \; \lambda^{m+l-2k} \, (\d {(y_{k+1},\dots,y_{m+l-k})})
\end{align*}
we derive
\begin{align*}
&\frac{\beta^k}{k!} \int_{W^k} \E{ \mathbb{E}\Big[ \Lambda_{(x_1,U_1,\dots,(x_k,U_k)}^{k} \chi_a(\Delta_W) \;\big|\; \1\big\{ [x_1,\dots,x_k]\in\Delta_W^{x_1,\dots,x_k} \big\} \Big]^2 } \; \lambda^k \, (\d {(x_1,\dots,x_k})) \\
&\qquad =\, \sum_{m,l=k}^{\alpha+1} a_{m-1}a_{l-1} \frac{\beta^{m+l-k}}{(m-k)!(l-k)!k!} \int_{W^{m+l-k}} \kappa_{m,l}^{m+l-k}(x_1,\dots,x_{m+l-k}) \; \lambda^{m+l-k} \, (\d {(x_1,\dots,x_{m+l-k}})).
\end{align*}
Since the last expression coincides with the $k$-th summand of (\ref{EulerVar_rewritten}), the desired statement follows.
\end{proof}

Since all summands in the representation of $\V{\chi_a(\Delta_W)}$ from Proposition \ref{Prop:EulerFock} are non-negative and according to the proof of Proposition \ref{Prop:EulerFock} the $k$-th summand coincides with the $k$-th summand from (\ref{EulerVar_rewritten}), for each $k\in\{1,\dots,\alpha+1\}$ we get
\begin{align} \label{Vorschläge_lowerBound}
\V{\chi_a(\Delta_W)} \,\geq\, \sum_{m,l=k}^{\alpha+1} a_{m-1} a_{l-1} \frac{\beta^{m+l-k}}{(m-k)!(l-k)!k!} \, \z m l {m+l-k} (W).
\end{align}
Unfortunately, the individual summands from the right-hand side of (\ref{Vorschläge_lowerBound}) could be negative.
Moreover, we would prefer a lower bound with as few summands as possible.
These two considerations suggest choosing $k=\alpha+1$, which leads to the estimate
\begin{align} \label{lowerBoundEulerVar}
\V{\chi_a(\Delta_W)} \,\geq\,  a_{\alpha}^2 \frac{\beta^{\alpha+1}}{(\alpha+1)!} \, \z {\alpha+1} {\alpha+1} {\alpha+1} (W).
\end{align}
Interestingly, this lower bound is identical to the expected value $\E{ \f {\alpha} }$, except for the factor $a_\alpha^2$, according to Proposition \ref{Prop:Cov}.

\section{Central limit theorems for the Euler characteristic} \label{Sec:CLT}

In this section, we establish a first central limit theorem for the Euler characteristic.
For this purpose, we fix a coefficient vector $\0 \neq a = (a_0, \dots, a_\alpha) \in \R^{\alpha+1}$ with $a_\alpha \neq 0$ for the entire section.
The latter is not a loss of generality, since otherwise $\alpha$ can simply be chosen to be smaller.
The proofs of the central limit theorems are based on Theorem \ref{Th:myRCM}, which we apply to the standardized Euler characteristic
\begin{align*}
\tilde{\chi}_a(\Delta_W) \,:=\, \frac{\chi_a(\Delta_W)-\E{\chi_a(\Delta_W)}}{\sqrt{\V{\chi_a(\Delta_W)}}},
\end{align*}
where $W \in \Bf$ is an observation window.
Since the difference operators from (\ref{verallgDiffOp}) are linear and vanish on constants, it follows that
\begin{align*}
\Lambda_{(x_1,U_1),\dots,(x_l,U_l)}^k\tilde{\chi}_a(\Delta_W) \,=\, \frac{\Lambda_{(x_1,U_1),\dots,(x_l,U_l)}^k \chi_a(\Delta_W)}{\sqrt{\V{\chi_a(\Delta_W)}}}.
\end{align*}
The key step for proving the central limit theorems will be to estimate the quantities from Theorem \ref{Th:myRCM}.

\subsection{Increasing intensity} \label{Sec:Intensity}

We fix an observation window $W\in\Bf$ and consider the limiting scenario of increasing intensity $\beta \rightarrow \infty$ for the Euler characteristic $\chi_a(\Delta_W)$.
We begin by estimating the expectations that appear in the expressions of Theorem \ref{Th:myRCM}.

\begin{Lem} \label{Lem:EulerIntensity}
For $F=\chi_a(\Delta_W)$ and pairwise different $x_1,x_2,x_3\in W$ it is true that
\begin{align*}
\E{ \left(\Lambda_{(x_1,U_1),(x_3,U_3),(x_2,U_2)}^2 F\right)^2 \left(\Lambda_{(x_2,U_2),(x_3,U_3),(x_1,U_1)}^2 F\right)^2} &= \mathcal{O}\left(\beta^{4\alpha-4}\right) = \E{ \left(\Lambda_{(x_1,U_1),(x_2,U_2)}^2 F\right)^4 }, \\[10pt]
\E{ \left( \Lambda_{(x_1,U_1),(x_2,U_2)}^1F \right)^2 \left( \Lambda_{(x_2,U_2),(x_1,U_1)}^1F \right)^2} &= \mathcal{O}\left(\beta^{4\alpha}\right) = \E{ \left(\Lambda_{(x,U)}F\right)^4 }, \\[10pt]
\E{ |\Lambda_{(x,U)}F|^3 } &= \mathcal{O}\left(\beta^{3\alpha}\right).
\end{align*}
\end{Lem}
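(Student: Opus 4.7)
My approach would be to apply Lemma \ref{Lem:EulerDiffOp} to rewrite each difference operator as a linear combination of the simplex-counts $f_i^{x_\cdot}(\Delta_W^{x_\cdot})$, and then bound the required moments via the multivariate Mecke equation. The guiding principle is that the exponent of $\beta$ in each claimed bound is just the total number of auxiliary Poisson points needed to populate the simplices being counted.

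By Lemma \ref{Lem:EulerDiffOp}, one has $\Lambda^k_{(x_1,U_1),\dots,(x_l,U_l)}F = \sum_{i=k-1}^\alpha a_i f_i^{x_1,\dots,x_k}(\Delta_W^{x_1,\dots,x_l})$, and each counted $i$-simplex contains all of $x_1,\dots,x_k$ (and, as observed in the proof of that lemma, none of $x_{k+1},\dots,x_l$). Consequently $f_i^{x_1,\dots,x_k}(\Delta_W^{x_1,\dots,x_l})$ is a sum over unordered $(i+1-k)$-tuples of distinct points of $\Psi_W$, with summands bounded by $1$ (they are products of connection-function values). When I expand a $q$-fold power or product of such expressions, the resulting moment becomes a finite linear combination of terms of the form $\mathbb{E}\big[\prod_{j=1}^q f_{i_j}^{\cdot}(\Delta_W^{x_1,\dots,x_l})\big]$ with $q\in\{3,4\}$. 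The $q$-variate Mecke equation converts each such expectation into a sum of integrals over $W^r$, where $r$ counts the \emph{distinct} Poisson points appearing across the $q$ tuples; the integrand is again bounded by $1$, so each integral is at most $|W|^r$.

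The dominant $\beta$-power comes from the configuration in which the $q$ Poisson-tuples share no points, giving $r=\sum_{j=1}^q(i_j+1-k)$. Substituting the worst case $i_j=\alpha$: for $q=4$ and $k=2$ this gives $r_{\max}=4\alpha-4$, matching the first line; for $q=4$ and $k=1$ it gives $r_{\max}=4\alpha$, matching the second; and for $q=3$ and $k=1$ it gives $r_{\max}=3\alpha$, matching the third. For the third statement I would first bound $|\Lambda_{(x,U)}F|\leq\sum_i|a_i|\,f_i^x(\Delta_W^x)$ to make everything non-negative before cubing. The main technical obstacle will be the combinatorial bookkeeping in the Mecke expansion: one must enumerate all the ways the $q$ Poisson-tuples can coincide and verify that every proper sharing pattern strictly decreases $r$, so that only the disjoint pattern contributes to the leading order and all other configurations enter the (finite, $\beta$-independent) prefactor, which depends on $W$, $\alpha$ and $a$ but is uniform over pairwise different $x_1,x_2,x_3\in W$.
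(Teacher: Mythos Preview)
Your approach is correct, but it takes a genuinely different and more elaborate route than the paper. The paper bypasses the Mecke equation entirely: after invoking Lemma~\ref{Lem:EulerDiffOp} it simply bounds each simplex count by a binomial coefficient in $\tau:=\Phi(W)$, namely $f_i^{x_1,x_3}(\Delta_W^{x_1,x_2,x_3})\leq\binom{\tau}{i-1}\leq\tau^{i-1}$, and then uses that the $k$-th moment of a Poisson random variable is a polynomial of degree $k$ in its parameter. This immediately gives the stated powers of $\beta$ with no combinatorial bookkeeping about sharing patterns of Poisson tuples.

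Your Mecke-based argument is finer, since it also tracks the dependence on $|W|$ through the integrals over $W^r$; indeed it is essentially the technique the paper reserves for Lemma~\ref{Lem:EulerWindow}, where control in $|W|$ is the whole point. For the present lemma, where only the $\beta$-scaling matters and $W$ is fixed, the paper's crude $\binom{\tau}{\cdot}$ bound is shorter and avoids the enumeration of coincidence patterns you flag as the main technical obstacle. Both approaches are valid; yours simply does more work than is needed here.
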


\begin{proof}
We will demonstrate the detailed calculation for the first equation, as the expected value appearing there is also the most complex.
This approach can easily be extended to the other expressions.
For this, we abbreviate by writing $\tau := \Phi(W) \sim \mathrm{Po}(\beta|W|)$ for the number of vertices of $\Delta_W$.
Then, using the representation of the difference operators from Lemma \ref{Lem:EulerDiffOp}, we have
\begin{align*}
&\E{ \left(\Lambda_{(x_1,U_1),(x_3,U_3),(x_2,U_2)}^2 F\right)^2 \left(\Lambda_{(x_2,U_2),(x_3,U_3),(x_1,U_1)}^2 F\right)^2} \\
&\qquad =\, \E{\bigg| \sum_{i,j,m,l=1}^\alpha \,a_ia_ja_ma_l \; f_i^{x_1,x_3}(\Delta_W^{x_1,x_2,x_3}) f_j^{x_1,x_3}(\Delta_W^{x_1,x_2,x_3}) f_m^{x_2,x_3}(\Delta_W^{x_1,x_2,x_3}) f_l^{x_2,x_3}(\Delta_W^{x_1,x_2,x_3}) \bigg|} \\
&\qquad\leq\, \sum_{i,j,m,l=1}^\alpha \, |a_ia_ja_ma_l| \; \E{ f_i^{x_1,x_3}(\Delta_W^{x_1,x_2,x_3}) f_j^{x_1,x_3}(\Delta_W^{x_1,x_2,x_3}) f_m^{x_2,x_3}(\Delta_W^{x_1,x_2,x_3}) f_l^{x_2,x_3}(\Delta_W^{x_1,x_2,x_3}) } \\
&\qquad\leq\, \sum_{i,j,m,l=1}^\alpha \, |a_ia_ja_ma_l| \; \E{ \binom \tau {i-1} \binom \tau {j-1} \binom \tau {m-1} \binom \tau {l-1} } \\
&\qquad\leq\, \sum_{i=0}^\alpha \sum_{j=0}^\alpha \sum_{m=0}^\alpha \, |a_ia_ja_m| \; \E{ \tau^{i+j+m+l-4} }.
\end{align*}
The desired result follows from the fact that the $k$-th moment of the Poisson distribution is a polynomial of order $k$ with respect to the parameter of the distribution, specifically the $k$-th complete Bell polynomial.
\end{proof}

Using Lemma \ref{Lem:EulerIntensity}, we can not only estimate the integrals from the quantities in Theorem \ref{Th:myRCM}, but, thanks to Corollary \ref{Kor:4thMoment}, also the fourth centered moment of the Euler characteristic.
With this, all preparations for proving an initial central limit theorem for the Euler characteristic are complete.

\begin{Th} [CLT for increasing intensity] \label{Th:ZGWS1}
Let $a=(a_0,\dots,a_\alpha) \in \R^{\alpha+1}$ with $a_\alpha\neq 0$ and assume $\z {\alpha+1} {\alpha+1} {\alpha+1}(W)>0$.
Then
\begin{align*}
\frac{\chi_a(\Delta_W)-\E{ \chi_a(\Delta_W) }}{\sqrt{\V{ \chi_a(\Delta_W) }}} \;\xrightarrow{d}\; \mathcal{N}(0,1) \qquad \text{for } \beta \rightarrow \infty.
\end{align*}
Specifically, the quantities from Theorem \ref{Th:myRCM} applied to $\tilde{\chi}_a(\Delta_W)$ satisfy
\begin{align*}
\gamma_1,\gamma_3,\gamma_4,\gamma_5 = \mathcal{O}\left(\beta^{-\frac{1}{2}}\right), \qquad \gamma_2 = \mathcal{O}\left(\beta^{-\frac{3}{2}}\right), \qquad \gamma_6 = \mathcal{O}\left(\beta^{-1}\right).
\end{align*}
\end{Th}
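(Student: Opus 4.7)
The strategy is to apply Theorem \ref{Th:myRCM} to the standardised functional $F = \tilde{\chi}_a(\Delta_W)$ and verify that each of $\gamma_1,\dots,\gamma_6$ decays at the claimed rate. Two ingredients drive the computation: sharp asymptotics for $\V{\chi_a(\Delta_W)}$ in $\beta$, and the moment bounds of Lemma \ref{Lem:EulerIntensity} for the operators $\Lambda^k\chi_a(\Delta_W)$.

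The key preliminary step is to establish that $\V{\chi_a(\Delta_W)} = \Theta(\beta^{2\alpha+1})$ as $\beta \to \infty$. The upper bound $O(\beta^{2\alpha+1})$ is immediate from Corollary \ref{Kor:Moments_Euler}, as the highest power of $\beta$ arises uniquely at $m=l=\alpha+1$ and $r=2\alpha+1$. For the matching lower bound I would invoke Proposition \ref{Prop:EulerFock}: since every summand in the Fock space representation is non-negative, the $k=1$ summand alone bounds the variance from below, and a direct evaluation via the Mecke formula together with Lemma \ref{Lem:EulerDiffOp} shows that its leading order in $\beta$ is a positive multiple of
\begin{align*}
a_\alpha^2\, \beta^{2\alpha+1} \int_W \Bigl( \int_{W^\alpha} \kappa_\alpha(x,y_1,\dots,y_\alpha) \; \lambda^\alpha (\d{(y_1,\dots,y_\alpha)}) \Bigr)^2\, \lambda (\d x).
\end{align*}
Since $\kappa_\alpha\ge 0$ and the hypothesis $\z{\alpha+1}{\alpha+1}{\alpha+1}(W)>0$ force the inner integral to be strictly positive on a set of positive $\lambda$-measure, this expression is strictly positive, so $\V{\chi_a(\Delta_W)} \geq c\,\beta^{2\alpha+1}$ for all $\beta$ large enough.

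With this scaling and the linearity identity $\Lambda^k\tilde{\chi}_a(\Delta_W) = \Lambda^k\chi_a(\Delta_W)/\sqrt{\V{\chi_a(\Delta_W)}}$, each factor $(\Lambda^k\tilde{\chi}_a(\Delta_W))^2$ appearing in $\gamma_1,\dots,\gamma_6$ contributes $\V{\chi_a(\Delta_W)}^{-1} = O(\beta^{-(2\alpha+1)})$, while Lemma \ref{Lem:EulerIntensity} supplies numerator orders $\beta^{4\alpha}$ for $\Lambda^1$ factors, $\beta^{4\alpha-4}$ for $\Lambda^2$ factors, and $\beta^{3\alpha}$ for the $|\Lambda^1|^3$ factor. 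Substituting these into the definitions of $\gamma_1,\dots,\gamma_6$ and carrying out the bookkeeping against the $\beta^3$, $\beta^2$ or $\beta$ prefactors and the fixed-volume integrals over powers of $W$ produces the stated rates; for instance, in $\gamma_1$ the inner integrand becomes $O(\beta^{-1}/\beta^{\alpha+1})\cdot O(\beta^{-3}/\beta^{\alpha+1})\cdot\beta^{2\alpha+2} = O(\beta^{-4})$, so $\gamma_1 = (\beta^3\cdot O(\beta^{-4}))^{1/2} = O(\beta^{-1/2})$. The only additional work is in $\gamma_4$, where the factor $\E{F^4}^{1/4}$ is controlled by Corollary \ref{Kor:4thMoment}: both arguments of its maximum reduce to integrals of the same type as in $\gamma_5$, yielding $\E{F^4} = O(1)$. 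The integrability requirements $\gamma_5,\gamma_6<\infty$ of Theorem \ref{Th:myRCM} are thus verified for every fixed $\beta>0$, and the distributional convergence follows from $d_W(F,N) \leq \gamma_1+\gamma_2+\gamma_3 \to 0$.

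The main obstacle is the sharp lower bound on the variance: the bound (\ref{lowerBoundEulerVar}) of order $\beta^{\alpha+1}$ derived earlier is insufficient to recover the CLT rates, and one must instead exploit the $k=1$ term of the Fock space representation in Proposition \ref{Prop:EulerFock}, together with non-negativity of the simplex functions and the assumption $\z{\alpha+1}{\alpha+1}{\alpha+1}(W)>0$, to extract the correct $\beta^{2\alpha+1}$ scaling.
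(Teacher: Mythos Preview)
Your proposal is correct and follows the same scheme as the paper: apply Theorem~\ref{Th:myRCM} to $\tilde\chi_a(\Delta_W)$, feed in the moment bounds of Lemma~\ref{Lem:EulerIntensity}, control $\E{F^4}$ via Corollary~\ref{Kor:4thMoment}, and balance everything against a $\Theta(\beta^{2\alpha+1})$ variance.

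The only point where you diverge from the paper is the variance lower bound, and here the paper takes the more direct route. You go through the Fock space representation (Proposition~\ref{Prop:EulerFock}), isolate the $k=1$ term, and extract its leading coefficient. The paper instead observes directly from Corollary~\ref{Kor:Moments_Euler} that $\V{\chi_a(\Delta_W)}$ is a polynomial in $\beta$ of degree $2\alpha+1$: the only term with $r=2\alpha+1$ is $m=l=\alpha+1$, with coefficient $\frac{a_\alpha^2}{(\alpha!)^2}\,\z{\alpha+1}{\alpha+1}{2\alpha+1}(W)$, and this integral is exactly your $\int_W\big(\int_{W^\alpha}\kappa_\alpha(x,y)\,\lambda^\alpha(\d y)\big)^2\,\lambda(\d x)$, positive by the hypothesis $\z{\alpha+1}{\alpha+1}{\alpha+1}(W)>0$. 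Both routes land on the same leading coefficient, so your remark that the bound (\ref{lowerBoundEulerVar}) is insufficient, while true, is a non-issue: the paper never invokes (\ref{lowerBoundEulerVar}) here and neither does your argument actually need the Fock machinery---the explicit polynomial formula already gives the sharp order. Your displayed bookkeeping for $\gamma_1$ is garbled (the expression $O(\beta^{-1}/\beta^{\alpha+1})\cdot O(\beta^{-3}/\beta^{\alpha+1})\cdot\beta^{2\alpha+2}$ does not parse), but the final orders for all six $\gamma_i$ are correct.
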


\begin{proof}
We apply Theorem \ref{Th:myRCM} to the standardized Euler characteristic $\tilde{\chi}_a(\Delta_W)$.
Lemma \ref{Lem:EulerIntensity} ensures that the necessary integrability conditions are satisfied.
First, from Corollary \ref{Kor:4thMoment} and Lemma \ref{Lem:EulerIntensity}, it can be deduced that
\begin{align*}
    \E{ \tilde{\chi}_a(\Delta_W)^4 } \,=\, \frac{\mathcal{O}(\beta^{4\alpha+2})}{\V{\chi_a(\Delta_W)}^2}.
\end{align*}
Since, according to Corollary \ref{Kor:Moments_Euler}, $\V{\chi_a(\Delta_W)}$ is a polynomial of degree $2\alpha+1$ with respect to $\beta$ due to $\z {\alpha+1} {\alpha+1} {\alpha+1}(W)>0$, the fourth moment of the standardized Euler characteristic is bounded with respect to $\beta$.
With the estimates from Lemma \ref{Lem:EulerIntensity}, we further obtain
\begin{alignat*}{6}
    &\gamma_1 \,&&=\, \frac{\mathcal{O}(\beta^{2\alpha+\frac{1}{2}})}{\V{\chi_a(\Delta_W)}} \,&&=\, \mathcal{O}(\beta^{-\frac{1}{2}}), \qquad &&\gamma_2 \,&&=\, \frac{\mathcal{O}(\beta^{2\alpha-\frac{1}{2}})}{\V{\chi_a(\Delta_W)}} \,&&=\, \mathcal{O}(\beta^{-\frac{3}{2}}), \\
    &\gamma_3 \,&&=\, \frac{\mathcal{O}(\beta^{3\alpha+1})}{\V{\chi_a(\Delta_W)}^\frac{3}{2}} \,&&=\, \mathcal{O}(\beta^{-\frac{1}{2}}), \qquad &&\gamma_4 \,&&=\, \frac{\mathcal{O}(\beta^{3\alpha+1})}{\V{\chi_a(\Delta_W)}^\frac{3}{2}} \,&&=\, \mathcal{O}(\beta^{-\frac{1}{2}}), \\
    &\gamma_5 \,&&=\, \frac{\mathcal{O}(\beta^{2\alpha+\frac{1}{2}})}{\V{\chi_a(\Delta_W)}} \,&&=\, \mathcal{O}(\beta^{-\frac{1}{2}}), \qquad &&\gamma_6 \,&&=\, \frac{\mathcal{O}(\beta^{2\alpha})}{\V{\chi_a(\Delta_W)}} \,&&=\, \mathcal{O}(\beta^{-1}).
\end{alignat*}
\end{proof}

\subsection{Increasing observation window} \label{Sec:ObservationWindow}

After the previous section examined the asymptotic scenario $\beta \rightarrow \infty$, where the intensity tends to infinity while keeping the observation window $W\in\Bf$ fixed, this section addresses the limiting behavior of the Euler characteristic as the measure of the observation window approaches infinity.
To this end, we again fix  $\mathbf{0} \neq a = (a_0, \dots, a_\alpha) \in \mathbb{R}^{\alpha + 1}$ with $a_\alpha \neq 0$ and initially consider a sequence $(W_n)$ in $\Bf$ with $|W_n| \rightarrow \infty$ as $n \rightarrow \infty$.
For a particular class of connection functions that can be termed highly non-integrable, a central limit theorem can be directly formulated using the results from Section \ref{Sec:Intensity}.

\begin{Kor} \label{Kor:ZGWS}
Let $(W_n)$ be a sequence in $\Bf$ with $|W_n| \rightarrow \infty$ as $n \rightarrow \infty$.
Assume there is a $p>0$ such that
\begin{align*}
\varphi_j \geq p \qquad \text{for all } j\in \{1,\dots,\alpha\}.
\end{align*}
Then the convergence
\begin{align*}
\frac{\chi_a\big(\Delta_{W_n}\big)-\E{ \chi_a\big(\Delta_{W_n}\big) }}{\sqrt{\V{ \chi_a\big(\Delta_{W_n}\big) }}} \;\xrightarrow{d}\; \mathcal{N}(0,1) \qquad \text{for } n \rightarrow \infty
\end{align*}
holds with the same rates of convergence of the quantities from Theorem \ref{Th:myRCM} as in Theorem \ref{Th:ZGWS1} (when replacing the intensity $\beta$ with $|W_n|$).
\end{Kor}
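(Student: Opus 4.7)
The plan is to reuse the machinery of Section \ref{Sec:Intensity} essentially verbatim, with $|W_n|$ playing the role that the intensity $\beta$ played in the proof of Theorem \ref{Th:ZGWS1}. The bound $\varphi_j \geq p > 0$ is exactly what is needed to turn integral representations of simplicial complexes into quantities of polynomial order in $|W_n|$, matching the polynomial-in-$\beta$ estimates from the previous subsection.

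First, I would observe that since each simplex function $\kappa_{m,l}^r$ is a finite product of the $\varphi_j$ and of $\varphi_0 \equiv 1$, the assumption $\varphi_j \geq p$ together with $\varphi_j \leq 1$ gives a constant $c = c(p,\alpha) > 0$ with
\begin{align*}
c \, |W_n|^r \,\leq\, \z m l r (W_n) \,\leq\, |W_n|^r
\end{align*}
for all admissible $m,l,r$. In particular, plugging this into the lower bound (\ref{lowerBoundEulerVar}) yields $\V{\chi_a(\Delta_{W_n})} \geq C \, |W_n|^{\alpha+1}$ for a constant $C > 0$ depending only on $\beta, p, a_\alpha$ and $\alpha$, which is the analogue of the degree-$(2\alpha+1)$ polynomial lower bound used in the intensity regime.

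Second, I would revisit Lemma \ref{Lem:EulerIntensity}: its proof bounds the $\Lambda^k$-operator expectations by linear combinations of $\E{\binom{\tau}{i-1}\cdots}$ with $\tau = \Phi(W_n) \sim \mathrm{Po}(\beta|W_n|)$. These moments are polynomials in $|W_n|$ (with $\beta$ fixed) of exactly the same degree as the $\beta$-polynomials obtained in Lemma \ref{Lem:EulerIntensity} when $W$ is fixed. Hence the estimates of Lemma \ref{Lem:EulerIntensity} hold with every $\beta$-exponent replaced by the corresponding $|W_n|$-exponent. Together with Corollary \ref{Kor:4thMoment}, this also gives $\E{\tilde{\chi}_a(\Delta_{W_n})^4} = \mathcal{O}(1)$ as $n \to \infty$.

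Third, I would conclude by applying Theorem \ref{Th:myRCM} to $\tilde{\chi}_a(\Delta_{W_n})$: repeating the arithmetic at the end of the proof of Theorem \ref{Th:ZGWS1} with numerators scaling in $|W_n|$ as in step two and denominators controlled by the variance lower bound from step one yields
\begin{align*}
\gamma_1, \gamma_3, \gamma_4, \gamma_5 \,=\, \mathcal{O}(|W_n|^{-1/2}), \qquad \gamma_2 \,=\, \mathcal{O}(|W_n|^{-3/2}), \qquad \gamma_6 \,=\, \mathcal{O}(|W_n|^{-1}),
\end{align*}
proving both the convergence in distribution and the stated rates. The only nontrivial step is the uniform-in-$W_n$ lower bound on the variance, which is the reason the hypothesis $\varphi_j \geq p$ is imposed; without it, integral representations could vanish faster than $|W_n|^{\alpha+1}$ and a more delicate analysis would be required.
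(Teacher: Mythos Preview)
Your overall strategy mirrors the paper's proof, but there is a genuine gap in the variance step. Plugging the bound $\z m l r(W_n)\geq c\,|W_n|^r$ into (\ref{lowerBoundEulerVar}) gives only
\[
\V{\chi_a(\Delta_{W_n})}\;\geq\;a_\alpha^2\,\frac{\beta^{\alpha+1}}{(\alpha+1)!}\,\z{\alpha+1}{\alpha+1}{\alpha+1}(W_n)\;\geq\;C\,|W_n|^{\alpha+1},
\]
since $K_{\alpha+1,\alpha+1}^{\alpha+1}$ has $\alpha+1$ vertices. This is \emph{not} the analogue of the degree-$(2\alpha+1)$ lower bound from Theorem \ref{Th:ZGWS1}; it is weaker by a factor $|W_n|^{\alpha}$. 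With only $\V{\cdot}\gtrsim |W_n|^{\alpha+1}$ your final arithmetic collapses: for instance $\gamma_1=\mathcal{O}(|W_n|^{2\alpha+\frac12})/\V{\cdot}=\mathcal{O}(|W_n|^{\alpha-\frac12})$ and $\gamma_3=\mathcal{O}(|W_n|^{3\alpha+1})/\V{\cdot}^{3/2}=\mathcal{O}(|W_n|^{\frac{3\alpha-1}{2}})$, neither of which tends to zero for $\alpha\geq 1$.

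The paper avoids (\ref{lowerBoundEulerVar}) here and instead reads off the order of the variance directly from Corollary \ref{Kor:Moments_Euler}. Under $\varphi_j\geq p$ every $\z m l r(W_n)$ is sandwiched between $p^{s}\,|W_n|^r$ and $|W_n|^r$, so each summand in the variance formula is of exact order $|W_n|^r$. The unique term with $r=2\alpha+1$ (namely $m=l=\alpha+1$) has positive coefficient $a_\alpha^2\beta^{2\alpha+1}/(\alpha!)^2$, and all remaining summands satisfy $r\leq 2\alpha$; hence $\V{\chi_a(\Delta_{W_n})}$ grows like $|W_n|^{2\alpha+1}$. With this stronger lower bound your steps two and three go through unchanged and yield exactly the rates $\gamma_1,\gamma_3,\gamma_4,\gamma_5=\mathcal{O}(|W_n|^{-1/2})$, $\gamma_2=\mathcal{O}(|W_n|^{-3/2})$, $\gamma_6=\mathcal{O}(|W_n|^{-1})$.
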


\begin{proof}
We first fix an observation window $W \in \Bf$.
From the proof of Lemma \ref{Lem:EulerIntensity}, we can directly conclude that the statement of Lemma \ref{Lem:EulerIntensity} also holds when $\beta$ is replaced by $|W|$.
For a simplicial complex $K$ on $r$ vertices that contains $s$ simplices of dimension at least 1, the condition $\varphi_j \geq p$ implies the inequality $I_K(W) \geq p^s |W|^r$.
Therefore, it follows from Corollary \ref{Kor:Moments_Euler} that the variance $\V{\chi_a(\Delta_W)}$ grows asymptotically at least like a polynomial of degree $2\alpha + 1$ in $|W|$ as $|W|\rightarrow\infty$.
Using these estimates, the desired statement can be derived from Theorem \ref{Th:myRCM} and Corollary \ref{Kor:4thMoment} analogously to the proof of Theorem \ref{Th:ZGWS1}.
\end{proof}

For the remainder of this section, let the integrability condition
\begin{align} \label{Integrabilität}
\nu \,:=\, \sup\limits_{x\in\X} \; \int_{\X} \varphi_1(x,y)^\frac{1}{2} \; \lambda (\d y) \,<\, \infty
\end{align}
hold.
The integral $\int_{\X} \varphi_1(x,y) \; \lambda (\d y)$ is up to the constant $\beta$ the expected edge degree of the vertex $x$ in $\Delta^x$.
The additional exponent $\frac{1}{2}$ is present for technical reasons, which will become clear throughout the section.
Ignoring this, (\ref{Integrabilität}) can be interpreted as globally bounded expected edge degrees of added vertices.
Consequently, the situation in Corollary \ref{Kor:ZGWS} is already excluded.
With this integrability condition, we can estimate integral representations of simplicial complexes, which is the subject of the following lemma.

\begin{Lem} \label{Lem:IntForm}
Let $K$ be a simplicial complex on $r$ vertices with $m$ connected components and assume that (\ref{Integrabilität}) holds.
Then the integral representation $I_K(W)$ of $K$ over an observation window $W$ satisfies 
\begin{align*}
I_K(W) \,\leq\, \nu^{r-m} |W|^m.
\end{align*}
\end{Lem}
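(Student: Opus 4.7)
The plan is to use the pointwise bounds $\varphi_j\leq 1$ and $\varphi_1\leq \varphi_1^{1/2}$ (both valid since every $\varphi_j$ takes values in $[0,1]$) to collapse $f_K$ onto a product indexed by a spanning forest of $K$, and then to peel leaves off that forest one at a time, converting each leaf-integration into a factor of $\nu$.

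First I would invoke the standard equivalence that the connected components of an abstract simplicial complex coincide with those of its $1$-skeleton. Since $K$ has $m$ connected components, each of them has a connected $1$-skeleton, so I can select one spanning tree per component and take $T$ to be their union: a spanning forest of $K$ with exactly $r-m$ edges, all of which are $1$-simplices of $K$. Dropping every factor of $f_K$ except those corresponding to edges of $T$ and then weakening $\varphi_1$ to $\varphi_1^{1/2}$ gives
\begin{align*}
f_K(x_1,\dots,x_r) \,\leq\, \prod_{\{i,j\}\in T} \varphi_1(x_i,x_j)^{1/2}.
\end{align*}

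Next I would integrate leaf-first. In each of the $m$ spanning trees, fix a root and order the remaining vertices in reverse BFS, so that every non-root vertex is a leaf of the subtree that still survives at the moment it is integrated out. When such a leaf $w$ is processed, only the factor $\varphi_1(x_w,x_u)^{1/2}$ involving its unique remaining neighbor $u$ depends on $x_w$, so Fubini and the integrability assumption (\ref{Integrabilität}) give
\begin{align*}
\int_W \varphi_1(x_w,x_u)^{1/2}\,\lambda(\d x_w) \,\leq\, \int_\X \varphi_1(x_w,x_u)^{1/2}\,\lambda(\d x_w) \,\leq\, \nu.
\end{align*}
After $r-m$ such peelings only the $m$ roots remain, and integrating each of them over $W$ yields $|W|^m$, producing the claimed bound $\nu^{r-m}|W|^m$.

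I do not expect any substantial obstacle here: the only conceptual point is the existence of the spanning forest, which is immediate from the equivalence between connectedness of a simplicial complex and of its $1$-skeleton, and the rest is bookkeeping with Fubini's theorem together with a well-defined leaf-peeling order.
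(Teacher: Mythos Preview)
Your proposal is correct and follows essentially the same route as the paper: bound $f_K$ by a product over the edges of a spanning tree/forest of the $1$-skeleton, then integrate out leaves one at a time to produce the factors of $\nu$. The only cosmetic differences are that the paper first treats the connected case and then multiplies over components, and that it keeps $\varphi_1$ in the product (implicitly using $\varphi_1\le\varphi_1^{1/2}$ only at the moment of bounding each inner integral by $\nu$), whereas you pass to $\varphi_1^{1/2}$ up front and handle all components simultaneously via the spanning forest.
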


\begin{proof}
First, let $m=1$.
We choose a minimal spanning tree $T$ of the 1-skeleton of $K$ and label the vertices of $K$ as $x_1, \dots, x_r$ such that, for $i \geq 2$, each $x_i$ has a unique neighbor in $T$ within the set ${x_1, \dots, x_{i-1}}$.
Such a numbering of the vertices always exists according to Corollary 1.5.2 in \cite{Diestel}.
For $i \geq 2$, let $j_i \in\{1,\dots,i-1\}$ denote the index of the unique neighbor of $x_i$ in ${x_1, \dots, x_{i-1}}$.
Then it follows that
\begin{align*} 
I_K(W) \,&\leq\, \int_{W^r} \; \prod_{i=2}^{r} \varphi_1(x_{j_i},x_i) \;\; \lambda^r \, (\d {(x_1,\dots,x_r)}) \nonumber \\
&=\, \int_{W^{r-1}} \; \prod_{i=2}^{r-1} \varphi_1(x_{j_i},x_i) \; \int_W \; \varphi_1(x_{j_r},x_r) \;\; \lambda \, (\d x_r)  \;\; \lambda^{r-1} \, (\d {(x_1,\dots,x_{r-1})}) \nonumber \\
&\leq\, \nu \, \int_{W^{r-1}} \; \prod_{i=2}^{r-1} \varphi_1(x_{j_i},x_i) \;\; \lambda^{r-1} \, (\d {(x_1,\dots,x_{r-1})}) \nonumber \\
&\leq\, \dots \leq \; \nu^{r-2} \, \int_{W^{2}} \; \varphi_1(x_{1},x_2) \;\; \lambda^{2} \, (\d {(x_1,x_{2})}) \;\leq\; \nu^{r-1} |W|.
\end{align*}
For arbitrary $m\in\N$, let $Z_1, \dots, Z_m$ be the $m$ connected components of $K$ and $r_j$ the number of vertices of $Z_j$.
Then it is evident that $\sum_{j=1}^m r_j = r$.
Since the integral representation of a simplicial complex is the product of the integral representations of its connected components, the first part of the proof yields
\begin{align*}
I_K(W) \,=\, I_{Z_1}(W) \cdot\,\dots\,\cdot I_{Z_m}(W) \,\leq\, \prod_{j=1}^m \nu^{r_j-1} |W| \,=\, \nu^{r-m} |W|^m.
\end{align*}
\end{proof}

Lemma \ref{Lem:IntForm} provides the existence of a constant $C>0$ with
\begin{align*}
\V{\chi_a\big(\Delta_W\big)} \,\leq\, C|W| \qquad \text{for all } W\in\Bf,
\end{align*}
since the simplicial complexes associated to the integral representations that appear in Corollary \ref{Kor:Moments_Euler} are all connected.
Furthermore, we can use Lemma \ref{Lem:IntForm} to estimate the quantities from Theorem \ref{Th:myRCM}.

\begin{Lem} \label{Lem:EulerWindow}
Let $F=\chi_a\big(\Delta_W\big)$.
There are constants $C_i>0$, $i\in\{1,\dots,7\}$, not depending on $W\in\Bf$, such that
\begin{align}
\int_{W^3} \E{ \left( \Lambda_{(x_1,U_1),(x_2,U_2)}^1F \right)^2 \left( \Lambda_{(x_2,U_2),(x_1,U_1)}^1F \right)^2}^\frac{1}{2} \hspace{4cm} & \nonumber \\
\times \, \E{ \left(\Lambda_{(x_1,U_1),(x_3,U_3),(x_2,U_2)}^2 F\right)^2 \left(\Lambda_{(x_2,U_2),(x_3,U_3),(x_1,U_1)}^2 F\right)^2}^\frac{1}{2}  \; \lambda^3 \, (\d {x}) &\;\leq\; C_1|W|, \label{E1} \\[8pt]
\int_{W^3} \E{ \left(\Lambda_{(x_1,U_1),(x_3,U_3),(x_2,U_2)}^2 F\right)^2 \left(\Lambda_{(x_2,U_2),(x_3,U_3),(x_1,U_1)}^2 F\right)^2} \; \lambda^3 \, (\d {x}) &\;\leq\; C_2|W|, \label{E2} \\[8pt]
\int_W \E{ |\Lambda_{(x,U)}F|^3 } \; \lambda \, (\d x) &\;\leq\; C_3|W|, \label{E3} \\[8pt]
\int_W \E{ \left(\Lambda_{(x,U)}F\right)^4 }^\frac{3}{4} \; \lambda \, (\d x) &\;\leq\; C_4|W|, \label{E4} \\[8pt]
\int_W \E{ \left(\Lambda_{(x,U)}F\right)^4 } \; \lambda \, (\d x) &\;\leq\; C_5|W|, \label{E5} \\[8pt]
\int_{W^2} 6\; \E{ \left(\Lambda_{(x_1,U_1)}F\right)^4 }^\frac{1}{2} \E{ \left(\Lambda_{(x_1,U_1),(x_2,U_2)}^2 F\right)^4 }^\frac{1}{2} \hspace{3.66cm} & \nonumber \\
+ 3\; \E{ \left(\Lambda_{(x_1,U_1),(x_2,U_2)}^2 F\right)^4 } \; \lambda^2 \, (\d {x}) &\;\leq\; C_6|W|, \label{E6} \\[8pt]
\int_W \E{ \left(\Lambda_{(x,U)}F\right)^4 }^\frac{1}{2} \; \lambda \, (\d x) &\;\leq\; C_7|W|, \label{E7}
\end{align}
where the integration is to be understood with respect to the points $x_i$.
\end{Lem}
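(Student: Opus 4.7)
The plan is to reduce each of the estimates (\ref{E1})--(\ref{E7}) to a finite sum of bounds on partial integral representations of simplicial complexes. By Lemma~\ref{Lem:EulerDiffOp} each $\Lambda^k_{(x_1,U_1),\dots,(x_l,U_l)}\chi_a(\Delta_W)$ is a linear combination of simplex counts $f_i^{x_1,\dots,x_k}(\Delta_W^{x_1,\dots,x_l})$; expanding the products appearing in the integrands, taking expectations via the multivariate Mecke equation (Theorem~4.4 in \cite{Last.Lectures}), and collecting terms turns each integrand into a finite sum of partial integrals $\Phi_K(\vec x):=\int_{W^{r-s}} f_K(\vec x,\vec y)\,\lambda^{r-s}(\d{\vec y})$, where $K$ is a simplicial complex generated by the chosen simplices, $\vec x$ are the added points (treated as fixed vertices) and $\vec y$ are the Poisson vertices. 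Bounding every $\Phi_K$ in terms of $\nu$, $|W|$ and any surviving $\varphi_1$-coupling of the added points then reduces the lemma to summing finitely many such bounds.

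For (\ref{E2}), (\ref{E3}) and (\ref{E5}) every simplex in the expansion shares a common added vertex (respectively $x_3$, $x$, $x$), so each generated complex $K$ is connected. Including the outer $\lambda$-integration turns the bound into a genuine integral representation $I_K(W)$ of a connected $r$-vertex complex, which Lemma~\ref{Lem:IntForm} bounds by $\nu^{r-1}|W|$. For (\ref{E4}) and (\ref{E7}) the same idea yields the uniform pointwise estimate $\E{(\Lambda_{(x,U)}F)^4}\leq C$: running the spanning-tree argument from the proof of Lemma~\ref{Lem:IntForm} with $x$ held fixed as the root (and using $\varphi_1\leq\varphi_1^{1/2}$ together with $\int_{\X}\varphi_1^{1/2}(\cdot,y)\,\lambda(\d y)\leq\nu$) shows $\Phi_K(x)\leq\nu^{r-1}$ uniformly in $x$. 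Consequently the $1/2$- and $3/4$-powers appearing in (\ref{E7}) and (\ref{E4}) are bounded pointwise in $x$, and the remaining $\lambda$-integral over $x$ supplies the factor $|W|$.

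For (\ref{E1}) and (\ref{E6}) a purely pointwise bound on the integrand would cost an extra factor $|W|$, so one must exploit $\varphi_1$-couplings between the added points. The crucial observation is that the operator $\Lambda^2_{(x_i,U_i),(x_j,U_j),\dots}$ only counts simplices containing both $x_i$ and $x_j$; each such simplex therefore includes the edge $\{x_i,x_j\}$, whence the associated simplex function contains the factor $\varphi_1(x_i,x_j)$. Combined with $\varphi_1^m\leq\varphi_1$ for $m\geq 1$ (since $\varphi_1\in[0,1]$) and the spanning-tree bound on the remaining Poisson vertices, this yields $\E{(\Lambda^2_{(x_1,U_1),(x_2,U_2)}F)^4}\leq C\varphi_1(x_1,x_2)$ pointwise, so both summands of (\ref{E6}) are dominated by $C\int_{W^2}\varphi_1^{1/2}(x_1,x_2)\,\lambda^2(\d{(x_1,x_2)})\leq C\nu|W|$. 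For (\ref{E1}), the analogous analysis of the second expectation (whose four simplices all meet at $x_3$) gives the pointwise bound $C\varphi_1^{1/2}(x_1,x_3)\varphi_1^{1/2}(x_2,x_3)$ for that square root, while the first expectation is only uniformly bounded (its four simplices separate through $x_1$ and $x_2$, and after removing internal Poisson vertices by the spanning-tree argument no $\varphi_1$-coupling between $x_1$ and $x_2$ survives). Integrating first over $x_1$ and $x_2$ for each fixed $x_3$ via $\int_W\varphi_1^{1/2}(\cdot,x_3)\,\lambda(\d y)\leq\nu$, and then over $x_3$, yields $C\nu^2|W|$.

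The main obstacle is the bookkeeping of these coupling factors throughout the Mecke expansion and the correct order of integration in (\ref{E1}); integrating $x_3$ first, or applying Cauchy--Schwarz to the full outer integral, decouples the two $\varphi_1^{1/2}$-factors into a bounded function of $(x_1,x_2)$ and loses the $|W|^{-1}$ gain needed to reach $|W|$ instead of $|W|^2$. Once the coupling structure is identified term by term, all seven bounds follow by summing finitely many non-negative integrals.
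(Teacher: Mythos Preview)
Your proposal is correct and follows essentially the same scheme as the paper: expand the difference operators via Lemma~\ref{Lem:EulerDiffOp}, pass to absolute values, apply the multivariate Mecke equation, and bound the resulting partial integrals with the spanning-tree argument underlying Lemma~\ref{Lem:IntForm}. For (\ref{E2}), (\ref{E3}), (\ref{E5}) and the pointwise bounds behind (\ref{E4}), (\ref{E7}) your treatment matches the paper's.

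The one place where you actually deviate is in the handling of (\ref{E1}). The paper does \emph{not} bound the two expectations separately. Instead, after applying the subadditivity $(x+y)^{1/2}\leq x^{1/2}+y^{1/2}$ termwise, it merges each pair of partial integrals (one from each expectation) into the simplex function of a single connected complex $M(\sigma,\rho)$ on $s+r+3$ vertices, and then observes that the edges $[x_1,x_3]$ and $[x_2,x_3]$ (coming from the second-order operators) force the required $\varphi_1^{1/2}(x_1,x_3)\varphi_1^{1/2}(x_2,x_3)$ factors inside the combined integrand. Your route---bounding the first expectation uniformly by a constant via two spanning trees rooted at $x_1$ and $x_2$, and extracting $\varphi_1(x_1,x_3)\varphi_1(x_2,x_3)$ from the second expectation \emph{before} taking the square root---is a legitimate and somewhat cleaner alternative: it avoids the merging step entirely and makes the role of the exponent $\tfrac12$ in condition~(\ref{Integrabilität}) more transparent. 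The paper's merged-complex argument, on the other hand, gives a single uniform mechanism that covers all of (\ref{E1})--(\ref{E7}) at once (which is why it insists on all outer exponents lying in $[\tfrac12,1]$), whereas your argument treats (\ref{E1}) and (\ref{E6}) as special cases requiring the explicit identification of which pairs of added points are forced to share an edge. Both approaches ultimately integrate $\varphi_1^{1/2}(x_1,x_3)\varphi_1^{1/2}(x_2,x_3)$ over $x_1,x_2$ first and then $x_3$ to reach $\nu^2|W|$, so the order-of-integration remark in your final paragraph is exactly on point.
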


\begin{proof}
First, all integrals are finite, which follows from Lemma \ref{Lem:EulerDiffOp} and
\begin{align*}
   f_i^{x_1,\dots,x_k} \big(\Delta_W^{x_1,\dots,x_l}\big) \,\leq\, \binom{\tau}{i+1-k} \,\leq\, \tau^{i+1-k}
\end{align*}
for $k\leq i+1$, where $\tau$ denotes the number of points of $\Phi$ in $W$, which follows a Poisson distribution with parameter $\beta |W|$.
We will begin by demonstrating the detailed calculation for (\ref{E2}) and write, for short, $\Delta_W^x:=\Delta_W^{x_1,x_2,x_3}$.
Using the representation from Lemma \ref{Lem:EulerDiffOp} of the difference operators, we obtain that the left side of (\ref{E2}) is equal to
\begin{align} \label{Darstellung1}
&\sum_{i,j,m,l=1}^\alpha a_ia_ja_ma_l \,\int_{W^3} \E{ f_i^{x_1,x_3}\big(\Delta_W^{x}\big) f_j^{x_1,x_3}\big(\Delta_W^{x}\big) f_m^{x_2,x_3}\big(\Delta_W^{x}\big) f_l^{x_2,x_3}\big(\Delta_W^{x}\big) } \; \lambda^3 \, (\d {x}).
\end{align}
For a set $I$ we write $T^{i,j,m,l}(I)$ for the set of tuples $(\sigma_1,\sigma_2,\sigma_3,\sigma_4)$ of subsets of $I$ satisfying
\begin{align*}
\sigma_1\cup\sigma_2\cup\sigma_3\cup\sigma_4 = I, \enskip |\sigma_1|=i-1, \enskip |\sigma_2|=j-1, \enskip |\sigma_3|=m-1, \enskip |\sigma_4|=l-1.
\end{align*}
Writing $\sigma_k^{x_{p},x_{q}}$ for the simplex consisting of the elements of $\sigma_k$ together with the points $x_{p}$ and $x_{q}$ we see that the integral from (\ref{Darstellung1}) can be written as
\begin{align*}
    \int_{W^3} \mathbb{E}\Bigg[ \sum_{r=\max\{i,j,m,l\}-1}^{i+j+m+l-4} \, \sum_{\substack{I\subseteq V(\Delta_W), \\ |I|=r}} \sum_{(\sigma_1,\sigma_2,\sigma_3,\sigma_4)\in T^{i,j,m,l}(I)} \1\big\{ \sigma_1^{x_1,x_3},\sigma_2^{x_1,x_3},\sigma_3^{x_2,x_3},\sigma_4^{x_2,x_3}\in \Delta_W^{x} \big\} \Bigg] \; \lambda^3 \, (\d {x}),
\end{align*}
where $V(\Delta_W)$ denotes the vertex set of $\Delta_W$.
We apply the multivariate Mecke equation (Theorem 4.4 in \cite{Last.Lectures}) to each summand of the sum with respect to $r$.
Together with the notation
\begin{align} \label{Definition_SK}
    K(\sigma_1,\sigma_2,\sigma_3,\sigma_4) \,:=\, \big\langle \sigma_1^{x_1,x_3},\sigma_2^{x_1,x_3},\sigma_3^{x_2,x_3},\sigma_4^{x_2,x_3} \big\rangle,
\end{align}
we obtain that the left side from (\ref{E2}) is equal to
\begin{align*}
    \sum_{i,j,m,l=1}^\alpha a_ia_ja_ma_l \sum_{r=s(i,j,m,l)}^{t(i,j,m,l)} \, \sum_{(\sigma_1,\sigma_2,\sigma_3,\sigma_4)\in T^{i,j,m,l}(r)} \frac{\beta^r}{r!}\, I_{K(\sigma_1,\sigma_2,\sigma_3,\sigma_4)}(W),
\end{align*}
where all sums are finite and $T^{i,j,m,l}(r):=T^{i,j,m,l}(\{1,\dots,r\})$.
Observe that the complex from (\ref{Definition_SK}) is always connected, since every vertex shares a simplex with $x_3$.
All in all, we see that the left-hand side of (\ref{E2}) is a linear combination of integral representations whose associated simplicial complexes are connected.
Lemma \ref{Lem:IntForm} now provides the estimation from (\ref{E2}). \\
In principle, this approach can be applied to all integrals from (\ref{E1})-(\ref{E7}).
The only relevant differences are that some integrals involve a product of two expectations, and some expectations have exponents other than 1.
We will demonstrate how the proof works for (\ref{E1}), as this is where both situations occur, and the integral in (\ref{E1}) is the most complicated.
We will keep the steps, which follow analogously to the first part of the proof, as brief as possible.
Using the elementary inequality
\begin{align} \label{elementareUGL}
    (x+y)^p \,\leq\, x^p + y^p, \qquad x,y\geq 0, \enskip p\in (0,1],
\end{align}
we can bound the left side of (\ref{E1}) by
\begin{align}
&\sum_{\substack{i_1,j_1,m_1,l_1=0, \\ i_2,j_2,m_2,l_2=1}}^\alpha \left( \prod_{k=1}^2 \sqrt{\big|a_{i_k}a_{j_k}a_{m_k}a_{l_k}\big|} \right) \, \int_{W^3} \bigg( \E{ f_{i_1}^{x_1}\big(\Delta_W^{x}\big) f_{j_1}^{x_1}\big(\Delta_W^{x}\big) f_{m_1}^{x_2}\big(\Delta_W^{x}\big) f_{l_1}^{x_2}\big(\Delta_W^{x}\big) } \bigg)^\frac{1}{2} \nonumber \\
&\qquad \times\, \bigg( \E{ f_{i_2}^{x_1,x_3}\big(\Delta_W^{x}\big) f_{j_2}^{x_1,x_3}\big(\Delta_W^{x}\big) f_{m_2}^{x_2,x_3}\big(\Delta_W^{x}\big) f_{l_2}^{x_2,x_3}\big(\Delta_W^{x}\big) } \bigg)^\frac{1}{2} \; \lambda^3 \, (\d {x}) \label{Darstellung2}
\end{align}
To proceed in a similar way like in the first part of the proof, we adapt the notation and write $S^{i,j,m,l}(I)$ for the set of tuples $(\sigma_1,\sigma_2,\sigma_3,\sigma_4)$ of subsets of a set $I$ with
\begin{align*}
\sigma_1\cup\sigma_2\cup\sigma_3\cup\sigma_4 = I, \enskip |\sigma_1|=i, \enskip |\sigma_2|=j, \enskip |\sigma_3|=m, \enskip |\sigma_4|=l
\end{align*}
and $S^{i,j,m,l}(s):=S^{i,j,m,l}(\{1,\dots,s\})$.
Furthermore, let
\begin{align*}
    L(\sigma_1,\sigma_2,\sigma_3,\sigma_4) \,:=\, \big\langle \sigma_1^{x_1},\sigma_2^{x_1},\sigma_3^{x_2},\sigma_4^{x_2} \big\rangle.
\end{align*}
With the same argument as in the first part of the proof, we can transform the product of the two expectations (without the exponent $\frac{1}{2}$) that appear in (\ref{Darstellung2}) to
\begin{align*}
    &\sum_{s=\max\{i_1,j_1,m_1,l_1\}}^{i_1+j_1+m_1+l_1} \, \sum_{(\sigma_1,\sigma_2,\sigma_3,\sigma_4)\in S^{i_1,j_1,m_1,l_1}(s)} \frac{\beta^s}{s!}\, \int_{W^s} f_{L(\sigma_1,\sigma_2,\sigma_3,\sigma_4)}(x_1,\dots,x_{s+3}) \; \lambda^s \, (\d {(x_4,\dots,x_{s+3})}) \\
    &\qquad\times\, \sum_{r=\max\{i_2,j_2,m_2,l_2\}-1}^{i_2+j_2+m_2+l_2-4} \, \sum_{(\rho_1,\rho_2,\rho_3,\rho_4)\in T^{i_2,j_2,m_2,l_2}(r)} \frac{\beta^r}{r!}\, \int_{W^r} f_{K(\rho_1,\rho_2,\rho_3,\rho_4)}(x_1,\dots,x_{r+3}) \; \lambda^r \, (\d {(x_4,\dots,x_{r+3})}).
\end{align*}
Observe that the product of the two simplex functions appearing here is itself a simplex function, specifically, the simplex function of the complex
\begin{align} \label{Definition_SK2}
   M(\sigma,\rho) \,:=\, M(\sigma_1,\dots,\sigma_4,\rho_1,\dots,\rho_4) \,:=\, \big\langle \sigma_1^{x_1},\sigma_2^{x_1},\sigma_3^{x_2},\sigma_4^{x_2},\rho_1^{x_1,x_3},\rho_2^{x_1,x_3},\rho_3^{x_2,x_3},\rho_4^{x_2,x_3} \big\rangle
\end{align}
on $r+s+3$ vertices, where the tuples $(\sigma_1,\sigma_2,\sigma_3,\sigma_4)\in S^{i,j,m,l}(I),(\rho_1,\rho_2,\rho_3,\rho_4)\in T^{i,j,m,l}(J)$ come from disjoint sets $I,J$ with $|I|=s, |J|=r$.
Fixing two such sets $I,J$ and applying the inequality from (\ref{elementareUGL}) we can bound that the term from (\ref{Darstellung2}) by
\begin{align}
    &\sum_{\substack{i_1,j_1,m_1,l_1=0, \\ i_2,j_2,m_2,l_2=1}}^\alpha \left( \prod_{k=1}^2 \sqrt{\big|a_{i_k}a_{j_k}a_{m_k}a_{l_k}\big|} \right) \, \sum_{s=\max\{i_1,j_1,m_1,l_1\}}^{i_1+j_1+m_1+l_1} \sum_{r=\max\{i_2,j_2,m_2,l_2\}-1}^{i_2+j_2+m_2+l_2-4} \sum_{\substack{(\sigma_1,\sigma_2,\sigma_3,\sigma_4)\in S^{i_1,j_1,m_1,l_1}(I), \\ (\rho_1,\rho_2,\rho_3,\rho_4)\in T^{i_2,j_2,m_2,l_2}(J)}} \frac{\beta^{s+r}}{s!r!} \nonumber \\
    &\qquad\times\, \int_{W^3} \Bigg( \int_{W^{s+r}} f_{M(\sigma,\rho)}(x_1,\dots,x_{s+r+3}) \; \lambda^{s+r} \, (\d {(x_4,\dots,x_{s+r+3})}) \Bigg)^{\frac{1}{2}} \; \lambda^3 \, (\d {(x_1,\dots,x_3)}). \label{Darstellung3}
\end{align}
We conclude the proof by showing that the integrals appearing there can be bounded by a constant times $|W|$.
Since all sums are finite, the claim then follows.
To this end, we fix a summand and consider the integral appearing there.
The crucial observation is as follows.
The complex in (\ref{Definition_SK2}) is generated by simplices, each of which contains at least one of the points $x_1,x_2,x_3$.
Consequently, each vertex of this complex is connected to at least one of these three points by an edge.
In addition, the edges $[x_1,x_3]$ and $[x_2,x_3]$ are part of the complex.
Hence, there exist $i_j\in\{1,2,3\}$, $j\in\{4,\dots,s+r+3\}$ such that
\begin{align*}
f_{M(\sigma,\rho)}(x_1,\dots,x_{r+3}) \;\leq\; \varphi_1(x_1,x_3)\, \varphi_1(x_2,x_3)\; \prod_{j=4}^{s+r+3}\, \varphi_1(x_j,x_{i_j}).
\end{align*}
In particular, $M(\sigma,\rho)$ is connected.
With this, the integral in (\ref{Darstellung3}) can be estimated using the integrability condition (\ref{Integrabilität}) in the following manner
\begin{align*}
&\int_{W^3} \Bigg( \int_{W^{s+r}} f_{M(\sigma,\rho)}(x_1,\dots,x_{s+r+3}) \; \lambda^{s+r} \, (\d {(x_4,\dots,x_{s+r+3})}) \Bigg)^\frac{1}{2} \; \lambda^3 \, (\d {(x_1,x_2,x_{3})}) \\
&\qquad \leq\, \int_{W^3} \varphi_1(x_1,x_3)^\frac{1}{2}\,\varphi_1(x_2,x_3)^\frac{1}{2}\; \Bigg( \prod_{j=4}^{s+r+3} \int_{W} \, \varphi_1(x_j,x_{i_j}) \; \lambda \, (\d {x_j}) \Bigg)^\frac{1}{2} \; \lambda^3 \, (\d {(x_1,x_2,x_{3})}) \\
&\qquad \leq\, \int_{W^3} \varphi_1(x_1,x_3)^\frac{1}{2}\varphi_1(x_2,x_3)^\frac{1}{2}\; \nu^{\frac{s+r}{2}} \; \lambda^3 \, (\d {(x_1,x_2,x_{3})}) \\
&\qquad \leq\, \nu^{\frac{s+r}{2}} \int_{W^2} \varphi_1(x_2,x_3)^\frac{1}{2} \int_W \varphi_1(x_1,x_3)^\frac{1}{2}\;\; \lambda \, (\d {x_1}) \; \lambda^2 \, (\d {(x_2,x_{3})}) \\
&\qquad \leq\, \nu^{\frac{s+r}{2}+1} \int_{W} \int_W \varphi_1(x_2,x_3)^\frac{1}{2}\;\; \lambda \, (\d {x_2}) \; \lambda \, (\d {x_3}) \\
&\qquad \leq\, \nu^{\frac{s+r}{2}+2}\, |W|.
\end{align*}
With this method, the estimates (\ref{E1})-(\ref{E7}) can all be established.
It should be noted that for the proof, it is essential that the exponents (outside) of the expectations in the integrals from (\ref{E1})-(\ref{E7}) lie in $[\frac{1}{2},1]$.
For such an exponent $p$, it must hold that $p\leq 1$ in order to apply the inequality (\ref{elementareUGL}).
Conversely, $p\geq\frac{1}{2}$ must be true so that in the last chain of inequalities, the individual integrals can always be bounded by $\nu$.
Indeed, the exponent $\frac{1}{2}$ in the integrability condition (\ref{Integrabilität}) is due to the fact that the smallest exponent (outside) of the expectations in the integrals from (\ref{E1})-(\ref{E7}) has this value.
Finally, we would like to point out that the case $s=r=0$ in (\ref{Darstellung3}) is possible.
In this case, the complex from (\ref{Definition_SK2}) consists of only the vertices $x_1,x_2,x_3$ and the two edges $[x_1,x_3],[x_2,x_3]$. 
Then the inner integral in (\ref{Darstellung3}) is replaced by the factor $\varphi_1(x_1,x_3)\varphi_1(x_2,x_3)$, so that the further argumentation can still proceed.
\end{proof}

\begin{Th} [CLT for increasing observation window] \label{Th:ZGWS2}
Let $(W_n)_{n\in\N}$ be a sequence of observation windows with $|W_n|\rightarrow\infty$ for $n\rightarrow\infty$, such that there is a constant $C>0$ with
\begin{align} \label{alphaSimplizes}
\E{ f_\alpha(\Delta_{W_n}) } \,\geq\, C |W_n| \qquad \text{for almost all } n\in\N.
\end{align}
Moreover, assume that the condition (\ref{Integrabilität}) holds.
Then for $a=(a_0,\dots,a_\alpha) \in \R^{\alpha+1}$ with $a_\alpha\neq 0$ it is true that
\begin{align*}
\frac{\chi_a\big(\Delta_{W_n}\big)-\E{ \chi_a\big(\Delta_{W_n}\big) }}{\sqrt{\V{\chi_a\big(\Delta_{W_n}\big) }}} \xrightarrow{d} \mathcal{N}(0,1) \qquad \text{for } n \rightarrow \infty.
\end{align*}
Concretely, the quantities from Theorem \ref{Th:myRCM} applied to $\tilde{\chi}_a\big(\Delta_{W_n}\big)$ satisfy
\begin{align*}
\gamma_1,\gamma_2,\gamma_3,\gamma_4,\gamma_5,\gamma_6 \,=\, \mathcal{O}\left(|W_n|^{-\frac{1}{2}}\right).
\end{align*}
\end{Th}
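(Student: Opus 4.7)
The plan is to apply Theorem \ref{Th:myRCM} directly to the standardized variable $\tilde{\chi}_a(\Delta_{W_n})$ and to show that each of the six bounding quantities $\gamma_1,\dots,\gamma_6$ is of order $|W_n|^{-1/2}$; convergence in the Wasserstein and the Kolmogorov distance will then imply the desired convergence in distribution. Because the difference operators $\Lambda^k$ are linear and annihilate constants, we have
\begin{align*}
\Lambda^k_{(x_1,U_1),\dots,(x_l,U_l)}\tilde{\chi}_a(\Delta_{W_n}) \,=\, \frac{\Lambda^k_{(x_1,U_1),\dots,(x_l,U_l)}\chi_a(\Delta_{W_n})}{\sqrt{V_n}}, \qquad V_n \,:=\, \V{\chi_a(\Delta_{W_n})},
\end{align*}
so the whole proof reduces to combining an upper bound on the moment integrals appearing in Theorem \ref{Th:myRCM} (evaluated at $F=\chi_a(\Delta_{W_n})$) with a matching lower bound on $V_n$.

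For the variance, I would use the estimate (\ref{lowerBoundEulerVar}), which together with Proposition \ref{Prop:Cov}(i) gives $V_n \geq a_\alpha^2\,\E{f_\alpha(\Delta_{W_n})}$. The assumption (\ref{alphaSimplizes}) therefore yields a constant $c>0$ with $V_n \geq c\,|W_n|$ for all sufficiently large $n$. For the moment integrals, Lemma \ref{Lem:EulerWindow} already provides the six estimates (\ref{E1})--(\ref{E6}) together with the auxiliary bound (\ref{E7}), each of the form (constant)$\cdot |W_n|$ when evaluated at $F=\chi_a(\Delta_{W_n})$. In particular, the integrability hypothesis $\gamma_5,\gamma_6<\infty$ required by Theorem \ref{Th:myRCM} is satisfied.

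Next, I would track how the normalization by $\sqrt{V_n}$ propagates through each $\gamma_i$. For $\gamma_1$ the integrand of $\tilde{\chi}_a(\Delta_{W_n})$ is that of $\chi_a(\Delta_{W_n})$ divided by $V_n^{\,2}$ (each of the two square-rooted expectations carries a factor $V_n^{-1}$), so with (\ref{E1}) and the outer square root one gets $\gamma_1 \leq 2\,(\beta^3 C_1|W_n|/V_n^{\,2})^{1/2}=\mathcal{O}(|W_n|^{1/2}/V_n)=\mathcal{O}(|W_n|^{-1/2})$. The same bookkeeping gives $\gamma_2=\mathcal{O}(|W_n|^{-3/2})$, $\gamma_3=\mathcal{O}(|W_n|/V_n^{\,3/2})=\mathcal{O}(|W_n|^{-1/2})$, $\gamma_5=\mathcal{O}((|W_n|/V_n^{\,2})^{1/2})=\mathcal{O}(|W_n|^{-1/2})$, and $\gamma_6=\mathcal{O}(|W_n|^{1/2}/V_n)=\mathcal{O}(|W_n|^{-1/2})$. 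To treat $\gamma_4$ the fourth moment $\E{\tilde{\chi}_a(\Delta_{W_n})^4}$ is needed; Corollary \ref{Kor:4thMoment} applied to $\tilde{\chi}_a(\Delta_{W_n})$, together with (\ref{E5}) and (\ref{E7}), bounds both alternatives in the maximum by constants (the first by $(\beta C_7|W_n|/V_n)^2=\mathcal{O}(1)$, the second by $\mathcal{O}(|W_n|/V_n^{\,2})+2=\mathcal{O}(1)$), so $\E{\tilde{\chi}_a(\Delta_{W_n})^4}=\mathcal{O}(1)$ and then $\gamma_4 \leq (\beta/2)\mathcal{O}(1)\cdot C_4|W_n|/V_n^{\,3/2}=\mathcal{O}(|W_n|^{-1/2})$.

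The main technical obstacle is already absorbed into Lemma \ref{Lem:EulerWindow}: the combinatorial decomposition of the moment integrals into integral representations of connected simplicial complexes, to which Lemma \ref{Lem:IntForm} applies and produces the decisive linear-in-$|W_n|$ bound using the integrability condition (\ref{Integrabilität}). Once those estimates and the variance lower bound are in place, the remainder of the argument is a direct application of Theorem \ref{Th:myRCM} and bookkeeping of powers of $V_n$, yielding the stated rates and hence the central limit theorem.
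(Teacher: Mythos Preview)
Your proposal is correct and follows essentially the same route as the paper: apply Theorem \ref{Th:myRCM} to $\tilde{\chi}_a(\Delta_{W_n})$, use Lemma \ref{Lem:EulerWindow} for the numerator bounds, the Fock-space lower bound (\ref{lowerBoundEulerVar}) together with (\ref{alphaSimplizes}) for the variance, and Corollary \ref{Kor:4thMoment} with (\ref{E5}), (\ref{E7}) for the fourth moment. One small bookkeeping slip: your claimed rate $\gamma_2=\mathcal{O}(|W_n|^{-3/2})$ is too strong here---Lemma \ref{Lem:EulerWindow} only gives the integral in (\ref{E2}) bounded by $C_2|W_n|$, so the same computation as for $\gamma_1$ yields $\gamma_2\le(\beta^3C_2|W_n|)^{1/2}/V_n=\mathcal{O}(|W_n|^{-1/2})$, matching the paper (the sharper $\beta^{-3/2}$ rate in Theorem \ref{Th:ZGWS1} comes from the extra $\beta$-saving for second-order operators in Lemma \ref{Lem:EulerIntensity}, which has no analogue in Lemma \ref{Lem:EulerWindow}). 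This does not affect the stated conclusion.
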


\begin{proof}
We apply Theorem \ref{Th:myRCM} for the standardized functional $\widetilde{F}_n$ with $F_n:=\chi_a\big(\Delta_{W_n}\big)$ an.
Due to Lemma \ref{Lem:EulerWindow} the integrability conditions of Theorem \ref{Th:myRCM} are fulfilled.
Corollary \ref{Kor:4thMoment} and Lemma \ref{Lem:EulerWindow} show the existence of a constant $C_8>0$ with
\begin{align*}
\E{ (F_n-\E{F_n})^4 } \,\leq\, C_8 |W_n|^2 \qquad \text{for almost all } n\in\N.
\end{align*}
Here, the estimate (\ref{E7}) is used.
Moreover, Proposition \ref{Prop:Cov} and (\ref{lowerBoundEulerVar}) together with the condition (\ref{alphaSimplizes}) imply
\begin{align*}
\V{ \chi_a\big(\Delta_{W_n}\big) } \,\geq\, a_\alpha^2 \, \E{ f_\alpha(\Delta_{W_n}) } \,\geq\, a_\alpha^2 C |W_n| \,=:\, C_9 |W_n| \quad \text{for almost all } n\in\N.
\end{align*}
We denote by $\vartheta_1^{(n)},\dots,\vartheta_6^{(n)}$ the integrals from (\ref{E1})-(\ref{E6}) (in this order) for the observation window $W_n$ and by $\gamma_1^{(n)},\dots,\gamma_6^{(n)}$ the quantities from Theorem \ref{Th:myRCM} applied for $\widetilde{F}_n$.
Using the estimates from Lemma \ref{Lem:EulerWindow}, we obtain for allmost all $n\in\N$
\begin{align*}
\gamma_1^{(n)} &\,=\, \frac{2\sqrt{\beta^3\vartheta_1^{(n)}}}{\V{\chi\big(\Delta_{W_n}\big) }} \,\leq\, \frac{2\sqrt{\beta^3C_1|W_n|}}{C_9|W_n|} \,=\, \frac{2\sqrt{\beta^3C_1}}{C_9} \, |W_n|^{-\frac{1}{2}}, \\[10pt]
\gamma_2^{(n)} &\,=\, \frac{\sqrt{\beta^3\vartheta_2^{(n)}}}{\V{\chi\big(\Delta_{W_n}\big) }} \,\leq\, \frac{\sqrt{\beta^3C_2|W_n|}}{C_9|W_n|} \,=\, \frac{\sqrt{\beta^3C_2}}{C_9} \, |W_n|^{-\frac{1}{2}}, \\[10pt]
\gamma_3^{(n)} &\,=\, \frac{\beta\vartheta_3^{(n)}}{\V{\chi\big(\Delta_{W_n}\big) }^\frac{3}{2}} \,\leq\, \frac{\beta C_3|W_n|}{\left(C_9|W_n|\right)^\frac{3}{2}} \,=\, \frac{\beta C_3}{C_9^\frac{3}{2}} \, |W_n|^{-\frac{1}{2}}, \\[10pt]
\gamma_4^{(n)} &\,=\, \frac{\beta}{2} \E{ (F_n-\E{F_n})^4 }^\frac{1}{4} \frac{\vartheta_4^{(n)}}{\V{\chi\big(\Delta_{W_n}\big) }^2} \,\leq\, \frac{\beta C_8^\frac{1}{4} C_4|W_n|^\frac{3}{2}}{2C_9^2|W_n|^2} \,=\, \frac{\beta C_8^\frac{1}{4} C_4}{2C_9^2} \, |W_n|^{-\frac{1}{2}}, \\[10pt]
\gamma_5^{(n)} &\,=\, \frac{\sqrt{\beta\vartheta_5^{(n)}}}{\V{\chi\big(\Delta_{W_n}\big) }} \,\leq\, \frac{\sqrt{\beta C_5|W_n|}}{C_9|W_n|} \,=\, \frac{\sqrt{\beta C_5}}{C_9} \, |W_n|^{-\frac{1}{2}}, \\[10pt]
\gamma_6^{(n)} &\,=\, \frac{\beta\sqrt{\vartheta_6^{(n)}}}{\V{\chi\big(\Delta_{W_n}\big) }} \,\leq\, \frac{\beta\sqrt{ C_6|W_n|}}{C_9|W_n|} \,=\, \frac{\beta\sqrt{C_6}}{C_9} \, |W_n|^{-\frac{1}{2}},
\end{align*}
with the constants $C_1,\dots,C_6$ from Lemma \ref{Lem:EulerWindow}.
\end{proof}

\section{Multivariate central limit theorem for simplex counts in the marked stationary case} \label{Sec:MultivariateCLT}

We now turn our attention to an important special case, which we refer to as the marked stationary model, and which has already gained substantial recognition in the literature on the RCM as a random graph.
Let $(\AA, \mathcal{T})$ be a Borel space equipped with a probability measure $\Theta$.
We consider the space $\X = \R^d \times \AA$ with the measure $\lambda_d \otimes \Theta$, where $\lambda_d$ denotes the $d$-dimensional Lebesgue measure on $\R^d$.
Consequently, $\Phi$ forms a Poisson process with intensity measure $\beta\lambda_d \otimes \Theta$.
In this setting, the vertices of $\Delta$ are Euclidean points, each associated with an additional information in $\AA$, referred to as the mark of the point.
Beyond measurability and symmetry, we assume the connection functions in the marked stationary model to be translation-invariant, as defined in (\ref{translation_invariant}).
For all $j\in\{1,\dots,\alpha\}$, $x_0,\dots,x_j,t\in\R^d$ and $a_0,\dots,a_j\in\AA$ the equality
\begin{align} \label{translation_invariant}
    \varphi_j\big((x_0+t,a_0),\dots,(x_j+t,a_j)\big) \,&=\, \varphi_j\big((x_0,a_0),\dots,(x_j,a_j)\big)
\end{align}
holds.
Note that the unmarked stationary case arises when the mark space $\AA$ consists of a single element.
Central limit theorems for the unmarked stationary case, applicable to a broad class of functionals, can be found in \cite{Can,Last.RCM}.
The general marked case in the context of percolation problems is studied in \cite{Caicedo,Dickson}.
To derive a multivariate central limit theorem for simplex counts in this framework, we begin by analyzing the asymptotic behavior of integral representations.
In order to do so, we consider sequences in $\mathcal{K}^d:=\{\emptyset\neq K\subset\R^d \mid K\text{ is compact and convex}\}$ and denote by
\begin{align*}
r(K) \,:=\, \sup \big\{ r>0 \,\|\, \exists x\in K \text{ s.t. } B(x,r)\subseteq K \big\}
\end{align*}
the inradius of a set $K\in\mathcal{K}^d$.
Furthermore, let 
\begin{align*}
d(x,K) \,:=\, \min \{ \Vert x-y\Vert \,\|\, y\in K \}
\end{align*}
be the distance of a point $x\in\R^d$ to a set $K\in\mathcal{K}^d$ and $\partial K$ the topological boundary of $K$.
Furthermore we denote by $B(x,r)$ the ball with center $x\in\R^d$ and radius $r>0$ and in this section we write $|W|$ for the $d$-dimensional Lebesgue measure of a measurable set $W\subseteq\R^d$.
We call a function $f:\left(\R^d\right)^m\rightarrow [0,\infty)$ translation-invariant, if
\begin{align*}
    f(x_1+t,\dots,x_m+t) \,=\, f(x_1,\dots,x_m) \qquad\text{for all } t,x_1,\dots,x_m\in\R^d.
\end{align*}

\begin{Lem} \label{Lem:translationsinvarianteFkt}
Let $(W_n)_{n\in\N}$ be a sequence in $\K^d$ with $r(W_n)\rightarrow\infty$ for $n\rightarrow\infty$ and $f:\left(\R^d\right)^m\rightarrow [0,\infty)$, $m\in\N$, be a measurable and translation-invariant function such that
\begin{align*}
\nu_f \,:=\, \int_{\left(\R^d\right)^{m-1}} f(\mathbf{0},x_2,\dots,x_{m}) \; \d {(x_2,\dots,x_m)} \,<\, \infty.
\end{align*}
For $m=1$ we read this term as $f(\0)$.
Then it is true that
\begin{align*}
\frac{1}{|W_n|} \int_{W_n^m} f(x_1,\dots,x_{m}) \; \d {(x_1,\dots,x_m)} \;\xrightarrow{\;\;}\; \nu_f \qquad \text{for } n\rightarrow\infty.
\end{align*}
\end{Lem}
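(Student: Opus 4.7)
My plan is to reduce the statement to the elementary fact that, for a convex body $W\in\K^d$, an overwhelming fraction of $W$ lies far from its boundary as soon as $r(W)$ is large, and then control the inner integral using monotone convergence. The case $m=1$ is a triviality, since translation invariance forces $f$ to be the constant $f(\mathbf{0}) = \nu_f$ on all of $\R^d$, and the left-hand side equals $\nu_f$ for every $n$; so I focus on $m \geq 2$.

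First I would substitute $y_i := x_i - x_1$ for $i = 2, \dots, m$ and use translation invariance of $f$ to rewrite
\begin{align*}
\frac{1}{|W_n|} \int_{W_n^m} f(x_1,\dots,x_m) \, \d{(x_1,\dots,x_m)}
\,=\, \frac{1}{|W_n|} \int_{W_n} g_n(x) \, \d x,
\end{align*}
where $g_n(x) := \int_{(W_n - x)^{m-1}} f(\mathbf{0}, y_2, \ldots, y_m) \, \d{(y_2,\ldots,y_m)}$. Since $f \geq 0$ and $W_n - x \subseteq \R^d$, I immediately get $0 \leq g_n(x) \leq \nu_f$, which already delivers the upper bound $\limsup_n \frac{1}{|W_n|}\int_{W_n^m} f \,\d x \leq \nu_f$.

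For the matching lower bound I work with the inner parallel set $W_n^{(R)} := \{x \in W_n \mid B(x,R) \subseteq W_n\}$. For $x \in W_n^{(R)}$ one has $B(\0, R) \subseteq W_n - x$, hence $g_n(x) \geq h(R)$ with $h(R) := \int_{B(\0, R)^{m-1}} f(\0, y_2, \ldots, y_m) \, \d{(y_2,\ldots,y_m)}$. Monotone convergence gives $h(R) \uparrow \nu_f$ as $R \to \infty$. For the volume of $W_n^{(R)}$ I would invoke a standard consequence of convexity: if $B(z_n, r(W_n)) \subseteq W_n$ is an inball and $R < r(W_n)$, then $z_n + (1 - R/r(W_n))(W_n - z_n) \subseteq W_n^{(R)}$ (because any point of this scaled copy shifted by $Ru$ with $|u|\leq 1$ can be written as a convex combination of a point of $W_n$ and a point of the inball), which yields
\begin{align*}
\frac{|W_n^{(R)}|}{|W_n|} \,\geq\, \left(1 - \frac{R}{r(W_n)}\right)^d \,\xrightarrow[n\to\infty]{}\, 1 \qquad \text{for every fixed } R > 0.
\end{align*}
Combining this with $g_n(x) \geq h(R) \, \1\{x\in W_n^{(R)}\}$ leads to $\liminf_n \frac{1}{|W_n|}\int_{W_n} g_n \, \d x \geq h(R)$, and letting $R \to \infty$ closes the argument.

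The main obstacle is the convex-geometric volume estimate: establishing that a tubular neighborhood of $\partial W_n$ of fixed width has relative Lebesgue measure vanishing as $r(W_n) \to \infty$. Once this is available via the inner-parallel inclusion above, the remainder is a routine interplay between Fubini, translation invariance of $f$, and monotone convergence in $R$.
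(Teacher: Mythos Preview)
Your argument is correct and follows the same overall strategy as the paper: both proofs reduce via translation invariance to an average over $W_n$ of a function bounded by $\nu_f$, and both isolate the contribution of points close to $\partial W_n$ from those in the ``core'' of $W_n$, using integrability of $f(\0,\cdot)$ to handle the first part and a volume estimate for the boundary shell to handle the second.

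The one substantive difference is how the boundary volume estimate is obtained. The paper cites external results (Steiner formula and estimates from Hug) to show that $|\{x\in W_n : d(x,\partial W_n)\le\varepsilon\}|/|W_n|\to 0$. Your inclusion $z_n + (1-R/r(W_n))(W_n-z_n)\subseteq W_n^{(R)}$ gives the same conclusion by an elementary convexity argument: for $w=z_n+(1-R/r)(v-z_n)$ with $v\in W_n$ and $|u|\le 1$ one has $w+Ru=(1-R/r)v+(R/r)(z_n+ru)$, a convex combination of points of $W_n$. This is self-contained and arguably cleaner than the paper's appeal to the literature, while buying exactly the same inequality $|W_n^{(R)}|/|W_n|\ge(1-R/r(W_n))^d$. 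Otherwise the two proofs are interchangeable.
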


\begin{proof}
In the case $m=1$ the translation invariance of $f$ implies that $f$ is constant and the statement is trivial.
Now let $m\geq 2$.
As a consequence from the translation invariance of $f$, we have
\begin{align*}
\nu_f \,=\, \frac{1}{|W_n|} \int_{W_n} \int_{\left(\R^d\right)^{m-1}} f(x_1,x_2,\dots,x_{m}) \; \d {(x_2,\dots,x_m)} \; \d {x_1},
\end{align*}
for each $n\in\N$, since the inner integral does not depend on $x_1$.
Hence, for any $\varepsilon>0$, it is true that
\begin{align} 
&\bigg|\, \nu_f - \frac{1}{|W_n|} \int_{W_n^m} f(x_1,\dots,x_{m}) \; \d {(x_1,\dots,x_m)} \,\bigg| \nonumber \\
& \qquad \qquad =\, \frac{1}{|W_n|} \int_{W_n} \int_{\left(W_n^{m-1}\right)^\mathrm{c}} f(x_1,x_2,\dots,x_{m}) \; \d {(x_2,\dots,x_m)} \; \d {x_1} \nonumber \\
& \qquad \qquad \leq\, \frac{1}{|W_n|} \int_{W_n} \1\big\{ d(x_1,\partial W_n)\geq \varepsilon \big\} \; \d {x_1} \int_{\left(B(\mathbf{0},\varepsilon)^{m-1}\right)^\mathrm{c}} f(\mathbf{0},x_2,\dots,x_{m}) \; \d {(x_2,\dots,x_m)} \nonumber \\
& \qquad \qquad \quad \; + \frac{\nu_f}{|W_n|} \int_{W_n} \1\big\{ d(x_1,\partial W_n)\leq \varepsilon \big\} \; \d {x_1}. \label{Konvergenz2}
\end{align}
We bound the first integral in the first summand from above by $|W_n|$, leading to the fact that the first summand does not depend on $n$ anymore.
Together with the Steiner formula (see, for example, (14.5) in \cite{Schneider}) it follows from (3.19) in \cite{Hug} and Lemma 3.7 in \cite{Hug} that 
\begin{align} \label{Randverschwindet}
\frac{|\{x\in W_n \,|\, d(x,\partial W_n)\leq \varepsilon\}|}{|W_n|} \,\leq\, \sum_{j=0}^{d-1} \frac{\varepsilon^{d-j}(2^d-1)}{r(W_n)^{d-j}} \;\xrightarrow{\;\;}\; 0 \qquad \text{for } n\rightarrow\infty.
\end{align}
Therefore, the second summand in (\ref{Konvergenz2}) converges to 0 for $n\rightarrow\infty$.
The dominated convergence theorem implies, that the second integral in the first summand of (\ref{Konvergenz2}) vanishes for $\varepsilon\rightarrow\infty$.
\end{proof}

An analogous argument is used in the proof of Lemma 9.2 in \cite{Last.RCM}, where the connection function of the RCM takes on the role of $f$.
The crucial application of Lemma \ref{Lem:translationsinvarianteFkt}, for which we require the integrability condition
\begin{align} \label{Integrabilität:esssup}
\esssup\limits_{a\in\AA} \; \int_{\R^d}\int_\AA \varphi_1\big((\0,a),(y,b)\big) \; \Theta(\d b) \; \d y \,<\, \infty,
\end{align}
to hold, yields the following corollary.

\begin{Kor} \label{Kor:Konvergenz_Integralformen}
Let $K$ be a connected simplicial complex on $r\in\N$ vertices and assume that (\ref{Integrabilität:esssup}) holds.
Then, for any sequence $(W_n)_{n\in\N}$ in $\K^d$ with $r(W_n)\rightarrow\infty$, we have
\begin{align*}
&\frac{1}{|W_n|} \int_{W_n^r} \int_{\AA^r} f_K\big((x_1,a_1),\dots,(x_r,a_r)\big) \; \Theta^r (\d {(a_1,\dots,a_r)}) \; \d {(x_1,\dots,x_r)} \\
&\qquad \xrightarrow{\;\;}\; \int_{(\R^d)^{r-1}} \int_{\AA^r} f_K\big((\0,a_1),\dots,(x_r,a_r)\big) \; \Theta^r (\d {(a_1,\dots,a_r)}) \; \d {(x_2,\dots,x_r)} \,<\, \infty \qquad \text{for } n\rightarrow\infty.
\end{align*}
\end{Kor}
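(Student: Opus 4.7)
The plan is to apply Lemma~\ref{Lem:translationsinvarianteFkt} to the function
\[
g(x_1,\dots,x_r) \,:=\, \int_{\AA^r} f_K\big((x_1,a_1),\dots,(x_r,a_r)\big) \; \Theta^r(\d{(a_1,\dots,a_r)}),
\]
with $m = r$. By (\ref{translation_invariant}) every $\varphi_j$ is translation-invariant in its $\R^d$-arguments, and $f_K$ is by Definition~\ref{Def:Simplexfkt} just a product of such factors, so $g$ is translation-invariant as well. Moreover the integral defining the proposed limit is exactly $\nu_g$; hence once $\nu_g < \infty$ is established, Lemma~\ref{Lem:translationsinvarianteFkt} delivers both the stated convergence and the finiteness of the limit in one stroke.

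The real work is thus to bound $\nu_g$. The idea is to recycle the spanning-tree argument of Lemma~\ref{Lem:IntForm}, but with the marked integrability condition (\ref{Integrabilität:esssup}) playing the role of (\ref{Integrabilität}). Since $K$ is connected, its $1$-skeleton is connected and admits a spanning tree $T$. Label the vertices $1,\dots,r$ so that for each $i\ge 2$ there is a unique $j_i\in\{1,\dots,i-1\}$ with $\{j_i,i\}\in T$ (Corollary~1.5.2 in \cite{Diestel}). Because all factors of $f_K$ lie in $[0,1]$ and the factors indexed by edges of $T$ appear in $f_K$, we obtain
\[
g(\0,x_2,\dots,x_r) \,\leq\, \int_{\AA^r} \prod_{i=2}^r \varphi_1\big((x_{j_i},a_{j_i}),(x_i,a_i)\big) \;\Theta^r(\d a).
\]
I would then integrate out the pairs $(x_i,a_i)$ by Fubini in the order $i = r, r-1, \dots, 2$. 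At each step $i$ the variable $x_i$ occurs in exactly one factor, since $i$ is a leaf of the subtree of $T$ on $\{1,\dots,i\}$, and translation invariance of $\varphi_1$ gives
\[
\int_\AA \int_{\R^d} \varphi_1\big((x_{j_i},a_{j_i}),(x_i,a_i)\big) \, \d x_i \, \Theta(\d a_i) \,=\, \int_\AA \int_{\R^d} \varphi_1\big((\0,a_{j_i}),(y,a_i)\big) \, \d y \, \Theta(\d a_i) \,\leq\, \nu
\]
for $\Theta$-a.e.\ $a_{j_i}$, where $\nu<\infty$ is the constant from (\ref{Integrabilität:esssup}). Iterating and using $\Theta(\AA) = 1$ in the final integration over $a_1$ yields $\nu_g \leq \nu^{r-1} < \infty$.

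The main obstacle, and the only point requiring real care, is the bookkeeping forced by the essential-supremum formulation of (\ref{Integrabilität:esssup}): the bound by $\nu$ in each step is only valid for $\Theta$-a.e.\ value of the parent mark $a_{j_i}$, and this $\Theta$-a.e.\ estimate must be correctly propagated to the later integration in which $a_{j_i}$ itself is integrated against $\Theta$. Since $\Theta$-null sets do not contribute, this is harmless, but it has to be tracked along the integration order dictated by the spanning tree. With $\nu_g < \infty$ in hand, Lemma~\ref{Lem:translationsinvarianteFkt} applied to $g$ then gives the claim directly.
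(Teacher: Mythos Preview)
Your proposal is correct and follows essentially the same route as the paper: define the mark-averaged function, verify translation invariance, bound $\nu_g$ via the spanning-tree peeling argument from Lemma~\ref{Lem:IntForm} with (\ref{Integrabilität:esssup}) in place of (\ref{Integrabilität}), and then invoke Lemma~\ref{Lem:translationsinvarianteFkt}. You even flag the $\Theta$-a.e.\ caveat in the iterated bound, which the paper handles in the same way.
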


\begin{proof}
We define a translation-invariant function $h:\left(\R^d\right)^r\rightarrow [0,\infty)$ by
\begin{align*}
h(x_1,\dots,x_r) \,:=\, \int_{\AA^r} f_K\big((x_1,a_1),\dots,(x_r,a_r)\big) \; \Theta^r (\d {(a_1,\dots,a_r)}).
\end{align*}
We show that the estimate
\begin{align} \label{Abschätzung_Kor:translationsinvarianteFkt}
\int_{\left(\R^d\right)^{r-1}} h(\mathbf{0},x_2,\dots,x_{r}) \; \d {(x_2,\dots,x_r)} \,\leq\, \rho^{r-1} \,<\, \infty
\end{align}
holds, where $\rho$ denotes the finite value of the essential supremum from (\ref{Integrabilität:esssup}).
To this end, we choose a minimal spanning tree $T$ of the 1-skeleton of $K$ and label the vertices of $K$ as $x_1,\dots,x_r$, such that for $i\geq 2$, $x_i$ has a unique neighbor in $T$ in the set ${x_1,\dots,x_{i-1}}$, which is always possible according to Corollary 1.5.2 in \cite{Diestel}.
Then, the integral in (\ref{Abschätzung_Kor:translationsinvarianteFkt}) can be bounded from above by \begin{align}\label{Abschätzung2_Kor:translationsinvarianteFkt}
\int_{\left(\R^d\right)^{r-1}} \int_{\AA^r} \prod_{i=2}^r \varphi_1\big( (x_{j_i},a_{j_i}),(x_i,a_i) \big) \; \Theta^r (\d {(a_1,\dots,a_r)}) \; \d {(x_2,\dots,x_r)},
\end{align}
where, without loss of generality, we set $x_1=\0$ (due to the translation invariance of the connection functions, it is irrelevant which component of the function $h$ is not integrated out in the integral from (\ref{Abschätzung_Kor:translationsinvarianteFkt})).
By successively integrating over $(x_i,a_i)$ for $i\in\{2,\dots,r\}$ in descending order with respect to $i$, we use at the $i$-th step the estimate
\begin{align}\label{Abschätzung3_Kor:translationsinvarianteFkt}
\int_{\R^d}\int_\AA \varphi_1\big( (x_{j_i},a_{j_i}),(x_i,a_i) \big) \; \Theta (\d {a_i}) \; \d {x_i} \,\leq\, \rho,
\end{align}
which holds for $\Theta$-almost all $a_{j_i}$ according to (\ref{Integrabilität:esssup}).
Due to the translation invariance of the connection functions, this estimate holds for any point $x_{j_i}\in\R^d$.
After applying the estimate from (\ref{Abschätzung3_Kor:translationsinvarianteFkt}) $r-1$ times to the integral in (\ref{Abschätzung2_Kor:translationsinvarianteFkt}), we obtain the upper bound $\rho^{r-1}\int_\AA \Theta(\d{a_1})=\rho^{r-1}$, which proves (\ref{Abschätzung_Kor:translationsinvarianteFkt}).
The statement now follows directly from Lemma \ref{Lem:translationsinvarianteFkt}.
\end{proof}

We apply Corollary \ref{Kor:Konvergenz_Integralformen} to the complexes from (\ref{Def:Komplex_Kmlr}) for $m,l\in\{1,\dots,\alpha+1\}$ and $\max\{m,l\}\leq r\leq m+l-1$.
Note that the complex $K_{m,l}^r$ is connected in this case and assume that the integrability condition (\ref{Integrabilität:esssup}) holds.
Writing
\begin{align*}
\z m l r (\0) := \int_{\left(\R^d\right)^{r-1}} \int_{\AA^r} \kappa_{m,l}^r\big((\0,a_1),(x_2,a_2),\dots,(x_r,a_r)\big) \; \Theta^r (\d{(a_1,\dots,a_r)})\; \d {(x_2,\dots,x_r)},
\end{align*}
Corollary \ref{Kor:Konvergenz_Integralformen} implies for each sequence $(W_n)_{n\in\N}$ in $\K^d$ with $r(W_n)\rightarrow\infty$
\begin{align} \label{asymptoticIntform}
\frac{\z m l r (W_n)}{|W_n|} \;\xrightarrow{\enskip}\; \z m l r (\0) \,<\, \infty \qquad \text{for } n \rightarrow \infty.
\end{align}
Corollary \ref{Kor:Moments_Euler} shows together with (\ref{asymptoticIntform}) for each coefficient vector $b=(b_0,\dots,b_\alpha)\in\R^d$
\begin{align}
\frac{\E{ \chi_b\big(\Delta_{W_n}\big) }}{|W_n|} \; &\xrightarrow{\enskip}\; \sum_{j=0}^\alpha b_j \frac{\beta^j}{j!} \, \z {j+1}{j+1}{j+1} (\0), \nonumber \\
& \hspace{-2.4cm} \frac{ \Cov{ f_{m-1}\big(\Delta_{W_n}\big)}{ f_{l-1}\big(\Delta_{W_n}\big)} } {|W_n|} \; \xrightarrow{\enskip}\; \sum_{r=\max\{m,l\}}^{m+l-1} \frac{\beta^r}{(r-m)!(r-l)!(m+l-r)!} \, \z m l r (\0) \,=:\, \sigma_{m,l} \nonumber \\
\frac{\V{ \chi_b\big(\Delta_{W_n}\big) }}{|W_n|} \; &\xrightarrow{\enskip}\; \sum_{m,l=1}^{\alpha+1} b_{m-1} b_{l-1} \sigma_{m,l}, \label{asymptoticVar}
\end{align}
for $n\rightarrow\infty$ and $m,l\in\{1,\dots,\alpha+1\}$.

\begin{Th} [Multivariate CLT for simplex counts] \label{Th:ZGWSmulti}
Let $(W_n)_{n\in\N}$ be a sequence in $\K^d$ with $r(W_n)\rightarrow\infty$ for $n\rightarrow\infty$.
Also, let $\z {\alpha+1}{\alpha+1}{\alpha+1} (\0) > 0$ and the integrability condition (\ref{Integrabilität}) hold.
Furthermore, let $N_\Sigma$ denote an $(\alpha+1)$-dimensional centered normally distributed random vector with covariance matrix $\Sigma:=(\sigma_{m,l})_{1\leq m,l\leq\alpha+1}$, whose entries $\sigma_{m,l}$, $m,l\in\{1,\dots,\alpha+1\}$, are defined above.
Then $\Sigma$ is a symmetric, positive definite matrix, and we have
\begin{align*}
\frac{1}{\sqrt{|W_n|}} \left( f_0\big(\Delta_{W_n}\big)-\E{f_0\big(\Delta_{W_n}\big)},\dots,f_\alpha\big(\Delta_{W_n}\big)-\E{f_\alpha\big(\Delta_{W_n}\big)} \right) \;\xrightarrow{d}\; N_\Sigma \qquad \text{for } n \rightarrow \infty.
\end{align*}
\end{Th}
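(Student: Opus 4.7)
The natural approach is to combine the Cram\'er--Wold device with the univariate CLT of Theorem \ref{Th:ZGWS2}. Since $\sum_{j=0}^{\alpha}b_j f_j(\Delta_{W_n})=\chi_b(\Delta_{W_n})$ for every $b=(b_0,\dots,b_\alpha)\in\R^{\alpha+1}$, the multivariate convergence will follow once I establish, for each $b\neq\0$, the one-dimensional limit
\[
\frac{\chi_b(\Delta_{W_n})-\E{\chi_b(\Delta_{W_n})}}{\sqrt{|W_n|}}\,\xrightarrow{d}\,\mathcal{N}(0,\,b^{\top}\Sigma b).
\]

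\textbf{Step 1 (positive definiteness of $\Sigma$).} Symmetry $\sigma_{m,l}=\sigma_{l,m}$ is inherited from Remark \ref{Bem:zmlr}(i). Fix $b\neq\0$ and set $j^\ast:=\max\{j:b_j\neq 0\}$. Observing that $\kappa_{j^\ast}$ is a sub-product of $\kappa_\alpha$ with the remaining factors taking values in $[0,1]$, the hypothesis $\z{\alpha+1}{\alpha+1}{\alpha+1}(\0)>0$ forces $\z{j^\ast+1}{j^\ast+1}{j^\ast+1}(\0)>0$ by Fubini on the set where $\kappa_\alpha>0$. Applying the lower bound (\ref{Vorschläge_lowerBound}) with $k=j^\ast+1$ kills all summands except $m=l=j^\ast+1$, giving
\[
\V{\chi_b(\Delta_W)}\,\geq\,b_{j^\ast}^2\,\frac{\beta^{j^\ast+1}}{(j^\ast+1)!}\,\z{j^\ast+1}{j^\ast+1}{j^\ast+1}(W),
\]
and the scaling limits (\ref{asymptoticIntform})--(\ref{asymptoticVar}) then yield $b^{\top}\Sigma b=\lim_n\V{\chi_b(\Delta_{W_n})}/|W_n|>0$.

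\textbf{Step 2 (univariate CLT for $\chi_b$).} If $b_\alpha\neq 0$, Proposition \ref{Prop:Cov}(i) combined with (\ref{asymptoticIntform}) yields $\E{f_\alpha(\Delta_{W_n})}/|W_n|\to\frac{\beta^{\alpha+1}}{(\alpha+1)!}\z{\alpha+1}{\alpha+1}{\alpha+1}(\0)>0$, so the density condition (\ref{alphaSimplizes}) is met for all large $n$ and Theorem \ref{Th:ZGWS2} applies. Slutsky's theorem together with (\ref{asymptoticVar}) then replaces $\sqrt{\V{\chi_b(\Delta_{W_n})}}$ by $\sqrt{|W_n|}$ and produces the limit $\mathcal{N}(0,b^{\top}\Sigma b)$. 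If $b_\alpha=0$, I pass to the reduced marked-stationary model with maximal dimension $j^\ast$ and connection functions $\varphi_1,\dots,\varphi_{j^\ast}$: its simplicial complex is the $j^\ast$-skeleton of $\Delta$, and $\chi_{(b_0,\dots,b_{j^\ast})}$ of this reduced complex over $W_n$ coincides with $\chi_b(\Delta_{W_n})$. The integrability condition (\ref{Integrabilität}) is unchanged, and Step 1 supplies $\z{j^\ast+1}{j^\ast+1}{j^\ast+1}(\0)>0$, so Theorem \ref{Th:ZGWS2} applies in the reduced model with nonzero top coefficient $b_{j^\ast}$, and the argument above concludes.

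\textbf{Step 3 (Cram\'er--Wold) and main obstacle.} Steps 1 and 2 establish convergence of every one-dimensional marginal to the correct Gaussian limit, so the Cram\'er--Wold device delivers the joint convergence of $|W_n|^{-1/2}(f_0(\Delta_{W_n})-\E{f_0(\Delta_{W_n})},\dots,f_\alpha(\Delta_{W_n})-\E{f_\alpha(\Delta_{W_n})})$ to $N_\Sigma$. The technically delicate point is the case $b_\alpha=0$: Theorem \ref{Th:ZGWS2} is formulated under $a_\alpha\neq 0$ via the lower bound (\ref{lowerBoundEulerVar}), so one must either invoke the more general (\ref{Vorschläge_lowerBound}) or truncate to the $j^\ast$-skeleton as proposed. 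Verifying that $\z{\alpha+1}{\alpha+1}{\alpha+1}(\0)>0$ cascades down to positivity of $\z{j^\ast+1}{j^\ast+1}{j^\ast+1}(\0)$ through the product structure of $\kappa_\alpha$ in the lower $\varphi_i$ is the crucial combinatorial observation that both keeps Cram\'er--Wold effective and guarantees that $\Sigma$ is strictly positive definite.
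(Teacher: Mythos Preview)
Your proposal is correct and follows essentially the same route as the paper: Cram\'er--Wold reduces the problem to the univariate CLT for $\chi_b$, Theorem~\ref{Th:ZGWS2} supplies this once the density condition on $\E{f_{j^\ast}(\Delta_{W_n})}$ is verified, and positivity of $\z{j^\ast+1}{j^\ast+1}{j^\ast+1}(\0)$ together with the variance lower bound gives positive definiteness of $\Sigma$. The only cosmetic differences are that the paper invokes Theorem~\ref{Th:ZGWS2} with $s=j^\ast$ in place of $\alpha$ without explicitly naming the skeleton reduction, and it justifies the positivity cascade $\z{\alpha+1}{\alpha+1}{\alpha+1}(\0)>0\Rightarrow\z{s+1}{s+1}{s+1}(\0)>0$ via a Mecke-formula interpretation (an $\alpha$-simplex through the origin contains $s$-simplices through the origin) rather than via the factorisation $\kappa_\alpha\leq\kappa_{j^\ast}$ combined with integrability; both arguments are short and equivalent.
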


\begin{proof}
First, note that (\ref{Integrabilität}) implies condition (\ref{Integrabilität:esssup}).
The Cramér-Wold theorem (see, for example, Corollary 6.5 in \cite{Kallenberg}) provides the desired convergence statement if, for all coefficient vectors $b\in\R^{\alpha+1}$, the convergence
\begin{align} \label{Verteilungskonvergenz_Linearkombination}
\frac{\chi_b\big(\Delta_{W_n}\big) - \E{\chi_b\big(\Delta_{W_n}\big)}}{\sqrt{|W_n|}} \;\xrightarrow{d}\; \SKP {N_\Sigma}b
\end{align}
holds, since $\chi_b\big(\Delta_{W_n}\big)$ is exactly the linear combination of simplex counts with coefficient vector $b$.
While this statement is trivial for $b=\0$, Theorem \ref{Th:ZGWS2} provides the asymptotic normality of the standardized Euler characteristic $\tilde{\chi}_b\big(\Delta_{W_n}\big)$ for $\0\neq b=(b_0,\dots,b_\alpha)\in \R^{\alpha+1}$ as $n\rightarrow\infty$, if there exists a constant $C>0$ with
\begin{align}\label{lastLabel}
\E{ f_s(\Delta_{W_n}) } \,\geq\, C |W_n| \qquad \text{for almost all } n\in\N
\end{align}
for $s:=\max\{ i\in\{0,\dots,\alpha\} \,\|\, b_i\neq 0 \}$.
Before proving (\ref{lastLabel}), we proceed with the further argumentation.
According to (\ref{asymptoticVar}), we have
\begin{align}\label{verylastLabel}
\frac{\V{ \chi_b\big(\Delta_{W_n}\big) }}{|W_n|} \;\xrightarrow{\enskip}\; \sum_{m,l=1}^{\alpha+1} b_{m-1}b_{l-1} \sigma_{m,l} \,=\, \V{ \langle N_{\Sigma},b \rangle }
\end{align}
and using Slutsky's lemma, the distributional convergence (\ref{Verteilungskonvergenz_Linearkombination}) follows.
According to Proposition \ref{Prop:Cov} and (\ref{lowerBoundEulerVar}), $\V{ \chi_b\big(\Delta_{W_n}\big) }\geq b_s^2\E{ f_s(\Delta_{W_n}) }$ holds.
Thus, (\ref{lastLabel}) implies that the limit in (\ref{verylastLabel}) is positive, from which the positive definiteness of $\Sigma$ follows. \\
We complete the proof by showing that (\ref{lastLabel}) holds for all $s\in\{0,\dots,\alpha\}$.
Proposition \ref{Prop:Cov} (i) yields $\E{ f_\alpha(\Delta_{W_n}) } = \frac{\beta^{\alpha+1}}{(\alpha+1)!} \z {\alpha+1}{\alpha+1}{\alpha+1} (W_n)$.
Together with (\ref{asymptoticIntform}), it follows that
\begin{align*}
\frac{\E{ f_\alpha(\Delta_{W_n}) }}{|W_n|} \;\xrightarrow{\enskip}\; \frac{\beta^{\alpha+1}}{(\alpha+1)!} \z {\alpha+1}{\alpha+1}{\alpha+1} (\0) \,>\, 0 \qquad \text{for } n \rightarrow \infty,
\end{align*}
from which (\ref{lastLabel}) follows for $s=\alpha$.
When considering independent marks $V\sim\Theta$ and $U\sim\Q$, which are also independent of $\Psi$, for the simplicial complex $\Delta^\0:=T(\Psi+\delta_{(\0,V,U)})$, Mecke’s formula implies that $\frac{\beta^s}{s!}\z {s+1}{s+1}{s+1} (\0)$ represents the expected number of $s$-simplices in $\Delta^\0$ that contain the added point $(\0,V)$.
Thus, $\z {\alpha+1}{\alpha+1}{\alpha+1} (\0)>0$ also guarantees that $\z {s+1}{s+1}{s+1} (\0)>0$ for all $s\in\{0,\dots,\alpha-1\}$.
This leads to (\ref{lastLabel}) for arbitrary $s\in\{0,\dots,\alpha-1\}$ analogously to the argument for $s=\alpha$.
\end{proof}

\section*{Acknowledgements}

The results of this paper stem from the author's PhD thesis \cite{Pabst.Thesis}.
The author is grateful to Daniel Hug, the supervisor of the PhD thesis, for the support and insightful feedback throughout the research.
This work was partially supported by the Deutsche Forschungsgemeinschaft (DFG, German Research Foundation) through the SPP 2265, under grant numbers HU 1874/5-1 and ME 1361/16-1.

\bigskip
\bigskip
\bigskip
\bigskip
\bigskip

\bibliographystyle{abbrv}
\bibliography{RandomConnectionModel_final}

\end{document}